\theoremstyle{definition}
\newtheorem{defn}{Definition}[section]
\newtheorem{thm}[defn]{Theorem}
\newtheorem{cor}[defn]{Corollary}
\newtheorem{prop}[defn]{Proposition}
\newtheorem{ex}[defn]{Example}
\newtheorem{conj}[defn]{Conjecture}
\newtheorem{rem}[defn]{Remark}
\newcommand{\mf}[1]{{\mathfrak{#1}}}
\newcommand{\mb}[1]{{\mathbf{#1}}}
\newcommand{\bb}[1]{{\mathbb{#1}}}
\newcommand{\mca}[1]{{\mathcal{#1}}}
\DeclareMathOperator{\rank}{rank}
\begin{document}

\title{Strong Koszulness of toric rings \\ associated with stable set polytopes \\ of trivially perfect graphs}
\author{Kazunori Matsuda}
\address[Kazunori Matsuda]{Department of Mathematics,  College of Science,  Rikkyo University, 
         Toshima-ku,  Tokyo 171-8501,  Japan}
\email{matsuda@rikkyo.ac.jp}
\subjclass[2000]{05E40.}
\date{\today}
\keywords{stable set polytope,  strongly Koszul,  trivially perfect graph}


\begin{abstract}
We give necessary and sufficient conditions for strong Koszulness of toric rings
associated with stable set polytopes of graphs. 
\end{abstract}

\maketitle


\section{Introduction} 

Let $G$ be a simple graph on the vertex set $V(G) = [n]$ with the edge set $E(G)$. 
$S \subset V(G)$ is said to be {\em stable} if $\{i,  j\} \not\in E(G)$ for all $i$,  $j \in S$. 
Note that $\emptyset$ is stable. 
For each stable set $S$ of $G$, we define $\rho(S) = \sum_{i \in S} {\bf e}_{i} \in \bb{R}^{n}$, where ${\bf e}_{i}$ is the $i$-th unit coordinate vector
in $\bb{R}^{n}$. 

The convex hull of $\{\rho(S) \mid S \text{\ is a stable set of\ } G \}$ is called the {\em stable set polytope} of $G$ (see \cite{C} ) , denoted by $\mathcal{Q}_{G}$. 
$\mathcal{Q}_{G}$ is a kind of $(0, 1)$-polytope. 
For this polytope,  we define the subring of $k[T,  X_{1},  \ldots,  X_{n}]$ as follows:

\begin{center}
$k[\mathcal{Q}_{G}] := k[T\cdot X_{1}^{a_{1}} \cdots X_{n}^{a_{n}} \mid (a_{1},  \ldots,  a_{n})  \text{\ is a vertex of } \mathcal{Q}_{G}]$, 
\end{center}

\noindent where $k$ is a field. 
$k[\mathcal{Q}_{G}]$ is called the {\em toric ring associated with the stable set polytope of } $G$.  
We can regard $k[\mathcal{Q}_{G}]$ as a graded $k$-algebra by setting $\deg T\cdot X_{1}^{a_{1}} \cdots X_{n}^{a_{n}} = 1$.  

In the theory of graded algebras, the notion of Koszulness (introduced by  Priddy \cite{P} ) plays an important role and is closely related to the Gr\"{o}bner basis theory. 

Let $\mathcal{P}$ be an integral convex polytope (i.e., a convex polytope each of whose vertices has integer coordinates) 
and $k[\mathcal{P}] := k[T\cdot X_{1}^{a_{1}} \cdots X_{n}^{a_{n}} \mid (a_{1},  \ldots,  a_{n})  \text{\ is a vertex of } \mathcal{P}]$ be the toric ring associated with $\mathcal{P}$. 
In general, it is known that


\begin{center}

The defining ideal of $k[\mathcal{P}]$ possesses a quadratic Gr\"{o}bner basis

$\Downarrow$

$k[\mathcal{P}]$ is Koszul 

$\Downarrow$

The defining ideal of $k[\mathcal{P}]$ is generated by quadratic binomials

\end{center}

\vspace{2mm}

\noindent follows from general theory (for example,  see \cite{BHeV}).

In this note,  we study the notion of a {\em strongly Koszul} algebra. 
In \cite{HeHiR},  Herzog,  Hibi, and Restuccia introduced this concept 
and discussed the basic properties of strongly Koszul algebras. 
Moreover,  they proposed the conjecture that the strong Koszulness of $R$ is at the top of the above hierarchy,  that is,

\begin{conj}[see \cite{HeHiR}]
The defining ideal of a strongly Koszul algebra $k[\mathcal{P}]$ possesses a quadratic Gr\"{o}bner basis. 
\end{conj}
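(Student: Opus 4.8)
The conjecture is open in general, so the realistic goal for this paper is to confirm it for the family $k[\mathcal{Q}_G]$, using the classification of strongly Koszul such rings that the main theorem provides. Writing, as in Herzog--Hibi--Restuccia, the strong Koszulness of $R=k[\mathcal{Q}_G]$ as the existence of a minimal homogeneous generating set $u_1,\ldots,u_n$ of the graded maximal ideal for which every colon ideal $(u_{i_1},\ldots,u_{i_r}):u_j$ is again generated by a subset of the $u_i$'s, the expected outcome (reflected in the title) is that $k[\mathcal{Q}_G]$ is strongly Koszul if and only if $G$ has no induced $P_4$ and no induced $C_4$, i.e.\ $G$ is trivially perfect. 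Granting this, the task reduces to showing that for every trivially perfect $G$ the toric ideal defining $k[\mathcal{Q}_G]$ admits a quadratic Gr\"{o}bner basis for some term order.

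The plan is an induction on $|V(G)|$ along the recursive construction of trivially perfect graphs: every such $G$ with at least two vertices is either a disjoint union $G=G_1\sqcup G_2$ of smaller trivially perfect graphs, or is obtained from a smaller trivially perfect graph $H$ by adjoining a vertex $v$ adjacent to all of $H$. The base case $G=K_1$ is immediate, since then $\mathcal{Q}_{K_1}$ is a segment and $k[\mathcal{Q}_{K_1}]$ is a polynomial ring in two variables whose defining ideal is $0$.

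For the inductive steps I would argue as follows. If $v$ is universal in $G$, then the stable sets of $G$ are exactly those of $H=G\setminus v$ together with the single new set $\{v\}$; since the variable $X_v$ occurs in none of the old generators, $TX_v$ is algebraically independent over $k[\mathcal{Q}_H]$, so $k[\mathcal{Q}_G]=k[\mathcal{Q}_H][TX_v]$ is a one-variable polynomial extension and extending the term order preserves the quadratic Gr\"{o}bner basis. If $G=G_1\sqcup G_2$, then because $\rho(S_1)$ and $\rho(S_2)$ are supported on disjoint blocks of coordinates one checks that $k[\mathcal{Q}_G]\cong k[\mathcal{Q}_{G_1}]\ast k[\mathcal{Q}_{G_2}]$, the Segre product taken with respect to the standard $T$-gradings. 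The crux is then the statement that the Segre product of two standard graded toric rings, each with a quadratic Gr\"{o}bner basis, again has one: here I would take a block term order on the Segre variables $z_{ij}$ ($i$ indexing the degree-one generators of the first factor, $j$ those of the second) that ranks the $2\times 2$ Segre minors $z_{ij}z_{kl}-z_{il}z_{kj}$ by a diagonal order first and refines within each fixed row or column by the given Gr\"{o}bner bases of the two factors, and then verify Buchberger's criterion for the union of the Segre minors and the lifted relations of $k[\mathcal{Q}_{G_1}]$ and $k[\mathcal{Q}_{G_2}]$.

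I expect the main obstacle to be exactly that Buchberger computation: one must show that every $S$-pair obtained by crossing a Segre minor with a lifted intra-factor relation, as well as the $S$-pairs among the minors themselves, reduces to $0$ modulo the proposed basis, equivalently that the initial ideal of $k[\mathcal{Q}_{G_1}]\ast k[\mathcal{Q}_{G_2}]$ under the block order is generated by the quadratic leading terms of the minors and of the two factors' bases. An alternative that avoids the Segre-product lemma is to work directly with the monomial generators $TX^{\rho(S)}$ and prove that for trivially perfect $G$ this set is \emph{sortable} in the sense of Sturmfels, so that the sorting order immediately yields a quadratic Gr\"{o}bner basis; the decisive point there is that the $P_4$-free and $C_4$-free hypothesis is precisely what makes the sorting operator send a pair of stable sets of $G$ to a pair of stable sets of $G$ (morally, unions and intersections of stable sets remain stable along the recursive decomposition), which is the combinatorial heart of the argument.
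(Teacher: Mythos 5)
You were asked about Conjecture 1.1, which the paper quotes from Herzog--Hibi--Restuccia and does \emph{not} prove; it is open, and the paper's only contribution to it is the special case that follows from the main theorem: if $k[\mathcal{Q}_G]$ is strongly Koszul then $G$ is trivially perfect, hence $G=G(P)$ for a tree poset $P$, which contains no X-poset, so by Hibi--Li (Theorem 3.7) $k[\mathcal{Q}_G]\cong \mathcal{R}_k[P]$ is a Hibi ring, whose defining ideal $I_P$ has a quadratic Gr\"obner basis by Hibi's theorem. You read the situation correctly (open in general, verify it for $k[\mathcal{Q}_G]$ granting the classification), and your route is genuinely different from the paper's: instead of tree posets, chain polytopes and the Hibi--Li identification, you use the recursive structure of trivially perfect graphs, and your two structural identifications are correct --- a universal vertex gives $k[\mathcal{Q}_G]=k[\mathcal{Q}_{G\setminus v}][TX_v]$ (the toric ideal is just extended), and a disjoint union gives the Segre product $k[\mathcal{Q}_{G_1}]\ast k[\mathcal{Q}_{G_2}]$. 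This is in effect a direct proof of triviality, bypassing Hibi rings.

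However, as a proof it has a genuine gap, and one outright error. The gap: everything rests on the lemma that a Segre product of two toric rings whose ideals have quadratic Gr\"obner bases again has one; you propose a block order and explicitly defer the Buchberger verification, but that verification \emph{is} the inductive step --- nothing else in the argument is hard. (The lemma is true and can be extracted from the theory of toric fiber products, or avoided altogether by the paper's route through $\mathcal{R}_k[P]$; note also that in your induction each factor is itself trivial, so a statement about trivial rings would suffice.) The error is the sortability alternative: for $(0,1)$-exponent vectors the sorting operator sends the pair $(\rho(S),\rho(T))$ to $(\rho(S\cup T),\rho(S\cap T))$, and unions of stable sets are almost never stable --- already for a single edge $\{x,y\}$, which is trivially perfect, the stable sets $\{x\}$ and $\{y\}$ have non-stable union --- so the generators of $k[\mathcal{Q}_G]$ are not sortable for any graph with an edge, and $C_4,P_4$-freeness does not rescue this. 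The union/intersection closure you are reaching for holds for poset ideals, not stable sets (antichains); transporting the problem from antichains to poset ideals is exactly the content of the Hibi--Li isomorphism $k[\mathcal{Q}_{G(P)}]\cong\mathcal{R}_k[P]$ that the paper uses, and your sketch implicitly needs it without supplying it.
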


A ring $R$ is {\em trivial} if $R$ can be constructed by 
starting from polynomial rings and repeatedly applying tensor and Segre products. 
In this note,  we propose the following conjecture. 

\begin{conj}
Let $\mathcal{P}$ be a $(0,  1)$-polytope and $k[\mathcal{P}]$ be the toric ring generated by $\mathcal{P}$. 
If $k[\mathcal{P}]$ is strongly Koszul,  then $k[\mathcal{P}]$ is trivial. 
\end{conj}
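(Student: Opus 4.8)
The plan is to prove the statement by induction on $\dim\mathcal{P}$ (equivalently, on the number of vertices), with the base case a simplex: if $V(\mathcal{P})=\{v_0,\dots,v_d\}\subseteq\{0,1\}^n$ is affinely independent, then there are no nontrivial binomial relations among the generators $m_{v_i}=T X^{v_i}$, so $k[\mathcal{P}]$ is a polynomial ring and hence trivial. The inductive step rests on a \emph{decomposition theorem}: every $(0,1)$-polytope $\mathcal{P}$ for which $k[\mathcal{P}]$ is strongly Koszul and which is not a simplex should admit a nontrivial splitting of the coordinate set $[n]=A\sqcup B$ realizing $\mathcal{P}$ either as a product $\mathcal{P}_1\times\mathcal{P}_2$ or as a free sum $\mathcal{P}_1\oplus\mathcal{P}_2$ of lower-dimensional $(0,1)$-polytopes. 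The relevant dictionary is: $k[\mathcal{P}_1\times\mathcal{P}_2]$ is the Segre product $k[\mathcal{P}_1]\ast k[\mathcal{P}_2]$, while $k[\mathcal{P}_1\oplus\mathcal{P}_2]\cong k[\mathcal{P}_1]\otimes_k k[\mathcal{P}_2]$; in both cases, triviality of the factors forces triviality of $k[\mathcal{P}]$ directly from the definition. Thus, granting the decomposition theorem, the task reduces to descending strong Koszulness to the two factors.

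First I would set up the combinatorial translation of strong Koszulness. Writing $m_v=T X^v$, the degree-one part of $k[\mathcal{P}]$ is spanned exactly by the $m_v$ with $v\in V(\mathcal{P})$, and a monomial $m_w$ lies in $(m_{v_1},\dots,m_{v_{r-1}}):m_{v_r}$ precisely when $w+v_r=v_i+v'$ in $\mathbb{Z}^n$ for some $i<r$ and some $v'\in V(\mathcal{P})$. Strong Koszulness forces each such colon ideal to be generated by a subset of the $m_v$, and I would read this off as a closure property of the $0/1$-set $V(\mathcal{P})$ under the componentwise operations $u\wedge v$ (AND) and $u\vee v$ (OR). The identity $u+v=(u\wedge v)+(u\vee v)$ turns each straightening relation $m_u m_v=m_{u\wedge v}\,m_{u\vee v}$ into a relation supported on vertices; since strong Koszulness implies (via Koszulness, unconditionally) that the defining ideal is generated by quadratic binomials, these straightening relations should exhaust the relations and endow $V(\mathcal{P})$ with a distributive, grid-like structure inside the hypercube.

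The heart of the argument, and the step I expect to be the main obstacle, is the decomposition theorem itself: extracting from these closure conditions an actual coordinate partition $[n]=A\sqcup B$ witnessing a product or free-sum factorization. The intended mechanism is to interpret the colon condition as saying that the link of a suitably chosen minimal vertex is again a strongly Koszul $(0,1)$-polytope, and that the edge-directions at that vertex partition into independent blocks; a block that is ``free'' over the remaining coordinates should yield a free-sum factor, whereas two blocks that meet in a complete grid of vertices should yield a product factor. Once a factorization is produced, I would either invoke an HHR-type descent statement (strong Koszulness passes to the factors of a Segre or tensor product, e.g.\ through the algebra retractions $k[\mathcal{P}]\to k[\mathcal{P}_i]$) or, more concretely, observe that the combinatorial closure conditions of the previous paragraph restrict to each block; then the inductive hypothesis applies to each factor and the conclusion follows.

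I want to be candid that the genuine difficulty is producing the decomposition for an \emph{arbitrary} $(0,1)$-polytope. For the special class of stable set polytopes the ambient graph supplies exactly such a splitting, through the recursive structure of trivially perfect graphs by disjoint unions (giving Segre products) and universal vertices (giving tensor products); but a general $0/1$-vertex configuration carries no graph to lean on, and classifying which subsets of $\{0,1\}^n$ satisfy the closure conditions is precisely the open content of the conjecture. A complete proof would therefore amount to classifying the indecomposable strongly Koszul $(0,1)$-polytopes and showing they are exactly the simplices; absent that classification, the argument above is conditional on the decomposition theorem.
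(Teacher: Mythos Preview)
The statement you are trying to prove is \emph{Conjecture~1.2} in the paper, and it is not proved there. The paper establishes only the special case where $\mathcal{P}$ is the stable set polytope $\mathcal{Q}_G$ of a graph (Theorem~1.3, restated and proved as Theorem~5.1), and it does so by quoting structural results about Hibi rings and tree posets (Theorems~3.7, 3.9, Proposition~4.8, Corollary~4.9) rather than by any direct analysis of $(0,1)$-polytopes. There is therefore no ``paper's own proof'' of the full conjecture to compare against; your proposal is an attempt at an open problem.

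Your write-up is honest about this: you make the whole argument conditional on a ``decomposition theorem'' and then say in the final paragraph that proving that theorem is ``precisely the open content of the conjecture.'' That is an accurate assessment, but it means the proposal is a strategy outline, not a proof. Two specific technical points deserve flagging. First, the assertion that strong Koszulness forces $V(\mathcal{P})$ to be closed under the componentwise operations $u\wedge v$ and $u\vee v$ is unjustified: the identity $u+v=(u\wedge v)+(u\vee v)$ gives a relation in $k[\mathcal{P}]$ only when both $u\wedge v$ and $u\vee v$ are already vertices, and deducing this closure from the colon-ideal condition is itself a nontrivial (and, as far as I know, unproven) step. Second, even granting closure under $\wedge,\vee$, the resulting distributive-lattice structure does not obviously yield a coordinate splitting $[n]=A\sqcup B$ of the product/free-sum type you describe; the known instances of the conjecture (order polytopes, edge polytopes of bipartite graphs, and now stable set polytopes) all succeed because an external combinatorial object supplies the splitting, exactly as you note for trivially perfect graphs. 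Absent a new idea for producing such a splitting intrinsically, the proposal does not advance beyond what the paper already concedes is open.
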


In the case of a $(0,  1)$-polytope,  Conjecture 1.2 implies Conjecture 1.1. 
If $\mathcal{P}$ is an order polytope or an edge polytope of bipartite graphs,  then Conjecture 1.2 holds true \cite{HeHiR}. 

In this note,  we prove Conjecture 1.2 for stable set polytopes. 
The main theorem of this note is the following:

\begin{thm}
Let $G$ be a graph. 
Then the following assertions are equivalent: 
\begin{enumerate}
	\item $k[\mathcal{Q}_{G}]$ is strongly Koszul.
	\item $G$ is a trivially perfect graph.
\end{enumerate}
In particular,  if $k[\mathcal{Q}_{G}]$ is strongly Koszul, then $k[\mathcal{Q}_{G}]$ is trivial. 
\end{thm}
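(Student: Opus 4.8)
**The plan is to prove the equivalence (1) $\Leftrightarrow$ (2) by combining a structural characterization of trivially perfect graphs with the known closure properties of strongly Koszul algebras.**

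Recall that a graph $G$ is trivially perfect if and only if it contains neither $P_4$ (the path on four vertices) nor $C_4$ (the cycle on four vertices) as an induced subgraph; equivalently, $G$ is obtained from single vertices by repeatedly taking disjoint unions and adding a universal vertex (i.e.\ $G$ is built by the operations $G_1 \sqcup G_2$ and $G \mapsto G + K_1$, the join with one vertex). Since disjoint union of graphs corresponds to the join operation on complements and the stable set polytope behaves well under these constructions, I expect $\mathcal{Q}_{G_1 \sqcup G_2}$ to decompose so that $k[\mathcal{Q}_{G_1 \sqcup G_2}] \cong k[\mathcal{Q}_{G_1}] \otimes_k k[\mathcal{Q}_{G_2}]$ (a disjoint union lets stable sets be chosen independently in each component, and the monomials multiply in disjoint variable sets), while adding a universal vertex $v$ — whose only stable set containing it is $\{v\}$ — should give $k[\mathcal{Q}_{G + K_1}] \cong k[\mathcal{Q}_G] \, \# \, k[t]$ or a similar Segre-type product (the stable sets of $G + v$ are exactly the stable sets of $G$ together with the single set $\{v\}$). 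The precise identification of which trivial construction arises from ``add a universal vertex'' is a point to nail down carefully.

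For the implication (2) $\Rightarrow$ (1), I would induct on $|V(G)|$ using the Wolk/Yan recursive structure: a single vertex gives the polynomial ring $k[T, TX_1]$, which is strongly Koszul; and since strong Koszulness is preserved under tensor products and Segre products (this is part of the basic theory of \cite{HeHiR}), each step of the recursion preserves strong Koszulness. Hence $k[\mathcal{Q}_G]$ is strongly Koszul, and moreover visibly trivial, which also yields the final ``in particular'' clause at no extra cost. For the converse (1) $\Rightarrow$ (2), I would argue contrapositively: if $G$ is not trivially perfect it contains an induced $P_4$ or $C_4$, say on vertex set $W$. I would then relate $k[\mathcal{Q}_{G[W]}]$ to $k[\mathcal{Q}_G]$ — one expects that restricting to an induced subgraph corresponds to a combinatorially pure quotient (killing the variables outside $W$, which should be a face/retract situation for the stable set polytope) — and invoke the fact that strong Koszulness passes to such quotients or retracts. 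It then suffices to check by direct computation that $k[\mathcal{Q}_{P_4}]$ and $k[\mathcal{Q}_{C_4}]$ are not strongly Koszul: for each, list the (few) vertices of the stable set polytope, write down the defining toric ideal, and exhibit a minimal monomial generator of a colon ideal $(\underline{x}) : m$ that is not generated in degree $1$, contradicting the defining property of strong Koszulness.

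The main obstacle I anticipate is the converse direction, specifically transferring the ``bad'' behavior from the forbidden induced subgraph back up to $G$ itself. Strong Koszulness is a somewhat rigid condition and it is not automatic that it descends to the subring $k[\mathcal{Q}_{G[W]}]$ just because $G[W]$ is induced; I would need to establish that $k[\mathcal{Q}_{G[W]}]$ is a retract (or an algebra of the form $k[\mathcal{Q}_G]/\mathfrak{p}$ for a suitable prime generated by variables) and that strong Koszulness is inherited in that situation — the relevant hereditary statements are in \cite{HeHiR}, but matching the hypotheses exactly to the stable-set-polytope setting is the delicate step. The $P_4$ and $C_4$ computations themselves are small and routine, and the forward direction is essentially bookkeeping once the two polytope decompositions above are verified, so the whole argument hinges on getting the induced-subgraph reduction right.
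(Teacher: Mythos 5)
Your strategy is viable and, in the forward direction, genuinely different from the paper's: the paper realizes a trivially perfect $G$ as the comparability graph of a tree poset $P$ (Corollary 4.9), observes that $P$ contains neither the X- nor the N-poset, and then quotes Hibi--Li (Theorem 3.7) to identify $k[\mathcal{Q}_{G}]\cong\mathcal{R}_{k}[P]$ and Herzog--Hibi--Restuccia (Theorem 3.9) to conclude strong Koszulness and triviality; you instead use the recursive build-up of trivially perfect graphs by disjoint unions and universal vertices together with the closure of strong Koszulness under the trivial operations, which trades those two cited theorems for closure properties that are also in \cite{HeHiR}. However, your two structural identifications are swapped, and one is wrong as stated: a dimension count in degree one shows that $k[\mathcal{Q}_{G_{1}\sqcup G_{2}}]$ is the \emph{Segre} product $k[\mathcal{Q}_{G_{1}}]\,\#\,k[\mathcal{Q}_{G_{2}}]$ (the number of stable sets multiplies, it does not add), not the tensor product; and adding a universal vertex $v$ gives the polynomial extension $k[\mathcal{Q}_{G}][TX_{v}]\cong k[\mathcal{Q}_{G}]\otimes_{k}k[y]$, since $X_{v}$ occurs in no other generator --- note that the Segre product with $k[t]$ is the identity operation, so your tentative guess there would produce nothing. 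With these corrections the induction goes through, both operations are among the trivial constructions, and triviality (hence the final clause of the theorem) comes along for free, as you say.

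For the converse, the ``delicate step'' you flag is exactly the paper's Corollary 2.4 and needs no retract theory: $k[\mathcal{Q}_{G_{W}}]$ is the quotient of $k[\mathcal{Q}_{G}]$ by the ideal generated by the semigroup generators $T\prod_{i\in S}X_{i}$ with $S\not\subseteq W$ (a monomial survives precisely when its exponent vector is supported in $W$), so the Herzog--Hibi--Restuccia result on quotients of strongly Koszul semigroup rings by subsets of the generators (Theorem 2.3 in the paper) applies verbatim. Your proposed direct computations for the two forbidden subgraphs do work and replace the paper's identification of $k[\mathcal{Q}_{C_{4}}]$ and $k[\mathcal{Q}_{P_{4}}]$ with Hibi rings of posets containing the N-poset: for instance, for $C_{4}$ with edges $\{1,2\},\{2,3\},\{3,4\},\{4,1\}$, the element $T^{2}X_{1}X_{3}X_{4}=(TX_{4})(TX_{1}X_{3})$ lies in $(TX_{1}):(TX_{2})$ because $T^{3}X_{1}X_{2}X_{3}X_{4}=TX_{1}\cdot(TX_{3})(TX_{2}X_{4})$, yet the only generator lying in this colon ideal is $TX_{1}$, which does not divide it in $k[\mathcal{Q}_{C_{4}}]$; hence $k[\mathcal{Q}_{C_{4}}]$ is not strongly Koszul, and a similar computation handles $P_{4}$.
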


Throughout this note,  we will use the standard terminologies of graph theory in \cite{Diest}.


\section{Strongly Koszul algebra}

Let $k$ be a field, $R$ be a graded $k$-algebra,  
and $\mf{m} = R_{+}$ be the homogeneous maximal ideal of $R$. 

\begin{defn}[\cite{HeHiR}]
A graded $k$-algebra $R$ is said to be {\em strongly Koszul}  if $\mf{m}$ admits a minimal system of generators $\{u_{1},  \ldots,  u_{t}\}$
which satisfies the following condition: 

\begin{quote}
For all  subsequences $u_{i_{1}},  \ldots,  u_{i_{r}}$ of $\{u_{1},  \ldots,  u_{t}\}$  $(i_{1} \le \cdots \le i_{r})$ and for all $j = 1,  \ldots,  r - 1$,  
$(u_{i_{1}},  \ldots,  u_{i_{j - 1}}) : u_{i_{j}}$ is generated by a subset of elements of $\{u_{1},  \ldots,  u_{t}\}$.
\end{quote}
\end{defn}

A graded $k$-algebra $R$ is called Koszul if $k = R/\mf{m}$ has a linear resolution.
By the following theorem,  we can see that a strongly Koszul algebra is Koszul. 

\begin{prop}[{[HeHiR,  Theorem 1.2]}]
If $R$ is strongly Koszul with respect to the minimal homogeneous generators $\{u_{1},  \ldots,  u_{t}\}$ of $\mf{m} = R_{+}$,  
then for all subsequences $\{u_{i_{1}},  \ldots,  u_{i_{r}}\}$ of $\{u_{1},  \ldots,  u_{t}\}$,  $R/(u_{i_{1}},  \ldots,  u_{i_{r}})$ has a linear resolution.
\end{prop}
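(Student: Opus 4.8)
The plan is to translate ``linear resolution'' into a statement about graded Betti numbers and then establish the required vanishing from a single short exact sequence, driven by a carefully chosen double induction. Since we work in the standard graded setting (so that each $u_{i}$ has degree $1$) and each quotient $R/I$ with $I = (u_{i_{1}}, \ldots, u_{i_{r}})$ is cyclic and generated in degree $0$, the module $R/I$ has a linear resolution if and only if $\Tor_{i}^{R}(R/I, k)_{j} = 0$ for all $i \ge 0$ and all $j \ne i$. Thus it suffices to prove this Tor-vanishing for every subsequence. The basic tool is the following short exact sequence: writing $I' = (u_{i_{1}}, \ldots, u_{i_{r-1}})$ and $J = I' : u_{i_{r}}$, multiplication by $u_{i_{r}}$ yields
$$0 \to (R/J)(-1) \xrightarrow{\ \cdot u_{i_{r}}\ } R/I' \to R/I \to 0,$$
where the shift by $-1$ records that $\deg u_{i_{r}} = 1$. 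The defining condition of a strongly Koszul algebra is exactly what guarantees that $J = (u_{i_{1}}, \ldots, u_{i_{r-1}}) : u_{i_{r}}$ is again generated by a subset of $\{u_{1}, \ldots, u_{t}\}$; hence $R/J$ is once more the quotient by an ideal generated by a subsequence of the distinguished generators, and so it is an admissible object for an inductive hypothesis.

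The crucial point is to choose the induction so that $R/J$ is always within reach. A naive induction on the length $r$ fails: since $I' \subseteq J$, the ideal $J$ contains $u_{i_{1}}, \ldots, u_{i_{r-1}}$ among its minimal generators, so it needs at least $r-1$ of them and in general strictly more, putting $R/J$ out of range of an induction on length. Instead I would induct lexicographically on the pair $(i, r)$, with primary index the homological degree $i$ and secondary index the length $r$, proving $\Tor_{i}^{R}(R/(u_{i_{1}}, \ldots, u_{i_{r}}), k)_{j} = 0$ for $j \ne i$ for all subsequences simultaneously. The base cases are immediate: $\Tor_{0}^{R}(R/I, k) = k$ sits in degree $0$, while for the empty sequence $R/0 = R$ is free, so $\Tor_{i}^{R}(R, k) = 0$ for $i \ge 1$.

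For the inductive step with $i \ge 1$ and $r \ge 1$, I would apply $\Tor_{\bullet}^{R}(-, k)$ to the fundamental sequence and read off the exact portion
$$\Tor_{i}^{R}(R/I', k)_{j} \to \Tor_{i}^{R}(R/I, k)_{j} \to \Tor_{i-1}^{R}(R/J, k)_{j-1}.$$
Fix $j \ne i$. The left-hand term vanishes because $I'$ is a shorter subsequence (length $r-1$) at the same homological degree $i$, so it is covered by the secondary induction. The right-hand term vanishes because $R/J$ is a quotient by some subsequence and $i-1 < i$, so the primary induction forces $\Tor_{i-1}^{R}(R/J, k)_{j-1} = 0$ unless $j-1 = i-1$, that is unless $j = i$. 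Exactness then gives $\Tor_{i}^{R}(R/I, k)_{j} = 0$, closing the induction. Observe how the double induction dissolves the earlier difficulty: the possibly large ideal $J$ is only ever needed at the strictly smaller homological degree $i-1$, where the conclusion already holds for every subsequence regardless of its length.

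I expect the main obstacle to be exactly this bookkeeping rather than any analytic difficulty: arranging the order of induction so that the colon ideal $J$, which may have far more generators than $I'$, is never controlled ``at its own homological degree.'' Once the lexicographic order on $(i, r)$ is in place, the entire content reduces to the single three-term exact sequence above, and the only point at which the hypothesis enters is the assertion that $J$ is generated by a subset of $\{u_{1}, \ldots, u_{t}\}$. A secondary detail to check is that the colon $(u_{i_{1}}, \ldots, u_{i_{r-1}}) : u_{i_{r}}$, obtained by peeling off the last generator, genuinely falls under the quantifier of the strong Koszulness condition applied to the subsequence $u_{i_{1}}, \ldots, u_{i_{r}}$; granting this, everything else is formal.
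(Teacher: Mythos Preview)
The paper does not supply its own proof of this proposition; it is quoted verbatim from \cite{HeHiR} as Theorem~1.2 there, so there is nothing in the present paper to compare against. Your argument is correct and is in fact the standard proof from the original Herzog--Hibi--Restuccia paper: the short exact sequence $0 \to (R/J)(-1) \to R/I' \to R/I \to 0$ together with the lexicographic induction on $(i,r)$ is exactly how the result is established there.

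One small remark on the ``secondary detail'' you flag at the end. The definition as printed in this paper quantifies over $j = 1, \ldots, r-1$, so the colon $(u_{i_1}, \ldots, u_{i_{r-1}}) : u_{i_r}$ is not literally among the listed cases for the subsequence $u_{i_1}, \ldots, u_{i_r}$ itself. However, since the hypothesis ranges over \emph{all} subsequences and the indices are only required to satisfy $i_1 \le \cdots \le i_r$, one may extend the given subsequence by one further element (repeating $u_{i_r}$ if necessary) and apply the condition with $j = r$ inside that longer subsequence. This recovers exactly the colon you need, so the verification goes through.
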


The following proposition plays an important role in the proof of the main theorem. 

\begin{thm}[{[HeHiR,  Proposition 2.1]}]
Let $S$ be a semigroup and $R = k[S]$ be the semigroup ring generated by $S$. 
Let $\{u_{1},  \ldots,  u_{t}\}$ be the generators of $\mf{m} = R_{+}$ which correspond to the generators of $S$. 
Then,  if $R$ is strongly Koszul,  then for all subsequences $\{u_{i_{1}},  \ldots,  u_{i_{r}}\}$ of $\{u_{1},  \ldots,  u_{t}\}$,  
$R/(u_{i_{1}},  \ldots,  u_{i_{r}})$ is also strongly Koszul. 
\end{thm}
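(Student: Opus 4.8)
The plan is to work throughout with the monomial generators $u_1,\dots,u_t$ and to reduce every instance of the strong Koszul condition for the quotient to a statement about colon ideals living inside $R$ itself. Write $I=(u_{i_1},\dots,u_{i_r})$, $\bar R=R/I$, let $\pi\colon R\to\bar R$ be the canonical surjection, and put $J=[t]\setminus\{i_1,\dots,i_r\}$. First I would check that $\{\pi(u_j)\mid j\in J\}$ is a minimal homogeneous generating set of $\bar{\mf{m}}=\bar R_+$: since $I$ is a monomial ideal and $\{u_1,\dots,u_t\}$ is a minimal set of monomial generators of $\mf{m}$, no $u_j$ with $j\in J$ lies in $I$, so $\pi(u_j)\neq 0$; and $\bar{\mf{m}}/\bar{\mf{m}}^{2}$ is the quotient of $\mf{m}/\mf{m}^{2}$ by the span of the classes of $u_{i_1},\dots,u_{i_r}$, hence has $k$-dimension $|J|$, so no proper subset of $\{\pi(u_j)\mid j\in J\}$ generates $\bar{\mf{m}}$. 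I will equip $J$ with the order induced from $[t]$.

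The heart of the argument is a claim about $R$ alone: \emph{for every $a\neq b$ the monomial ideal $(u_a):_R u_b$ is generated by a subset of $\{u_1,\dots,u_t\}$}. For $a<b$ this is an instance of the defining condition of strong Koszulness. For $a>b$ I would use that $R=k[S]$ is a domain, that each $u_i$ is a monomial of degree $1$, and that $\{u_1,\dots,u_t\}$ is precisely the set of degree-$1$ monomials of $R$. Since $b<a$, strong Koszulness gives $(u_b):_R u_a=(u_{d_1},\dots,u_{d_e})$ for suitable indices; and in a domain $(u_a)\cap(u_b)=u_a\bigl((u_b):_R u_a\bigr)$ while $(u_a):_R u_b=u_b^{-1}\bigl((u_a)\cap(u_b)\bigr)$, so $(u_a):_R u_b=u_b^{-1}(u_au_{d_1},\dots,u_au_{d_e})$. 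Each $u_au_{d_\ell}/u_b$ lies in $R$ (it belongs to $(u_a)\cap(u_b)\subseteq(u_b)$), is a monomial, and has degree $2-1=1$, so it equals some $u_{c_\ell}$; hence $(u_a):_R u_b=(u_{c_1},\dots,u_{c_e})$. Since a colon ideal of a monomial ideal splits termwise, $(u_{a_1},\dots,u_{a_k}):_R u_b=\sum_{\ell=1}^{k}\bigl((u_{a_\ell}):_R u_b\bigr)$, so every such colon is generated by a subset of $\{u_1,\dots,u_t\}$ as well. (In passing this shows that strong Koszulness of a semigroup ring, relative to its monomial generators, does not depend on the order chosen for them.)

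To finish, take a subsequence $u_{j_1},\dots,u_{j_s}$ $(j_1\le\cdots\le j_s$, all in $J)$ and fix $p$. Lifting to $R$, an element of $\bar R$ lies in $(\pi(u_{j_1}),\dots,\pi(u_{j_{p-1}})):_{\bar R}\pi(u_{j_p})$ precisely when some preimage $x$ satisfies $xu_{j_p}\in(u_{j_1},\dots,u_{j_{p-1}})+I$, that is
\[
\bigl(\pi(u_{j_1}),\dots,\pi(u_{j_{p-1}})\bigr):_{\bar R}\pi(u_{j_p})\;=\;\pi\Bigl(\bigl(u_{j_1},\dots,u_{j_{p-1}},u_{i_1},\dots,u_{i_r}\bigr):_R u_{j_p}\Bigr).
\]
By the previous paragraph the colon ideal inside the $\pi$ is generated by a subset $B\subseteq\{u_1,\dots,u_t\}$, so its image is generated by $\{\pi(u)\mid u\in B\}$; discarding those $u\in B$ whose index lies in $\{i_1,\dots,i_r\}$ (they map to $0$) leaves a subset of $\{\pi(u_j)\mid j\in J\}$. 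This is exactly the required condition, and it also covers the case $p=1$ (the annihilator of $\pi(u_{j_1})$, which may be nonzero because $\bar R$ need not be a domain).

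The step I expect to be the main obstacle is the case $a>b$ of the claim in the second paragraph, i.e.\ the order-independence of strong Koszulness for semigroup rings: this is the one place where the semigroup structure, rather than that of an arbitrary graded algebra, is genuinely used — the domain property to rewrite $(u_a):_R u_b$ as $u_b^{-1}\bigl((u_a)\cap(u_b)\bigr)$, and standard-gradedness to recognise the degree-$1$ monomials that appear there as generators $u_{c_\ell}$. By contrast, pinning down the generators of $\bar{\mf{m}}$, lifting colon ideals along $\pi$, and splitting monomial colon ideals termwise are all routine.
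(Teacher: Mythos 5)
The paper does not prove this statement at all: it is quoted verbatim from Herzog--Hibi--Restuccia as [HeHiR, Proposition~2.1], so there is no in-paper argument to compare against. Your proof is correct in the intended setting and is essentially the standard argument behind that citation: lift the colon ideal in $R/I$ to $\bigl((u_{j_1},\dots,u_{j_{p-1}})+I\bigr):u_{j_p}$, split it termwise into pairwise colons $(u_a):u_b$ (this splitting, which you label routine, is in fact the second place where the semigroup hypothesis is genuinely used -- it holds because monomials form a $k$-basis of $k[S]$ and monomial ideals are spanned by the monomials they contain, and it fails for general graded algebras), and then invoke strong Koszulness pairwise. The one piece you add beyond the standard route is the order-independence step for $a>b$ via $(u_a):u_b=u_b^{-1}\bigl((u_a)\cap(u_b)\bigr)$, which is needed because the definition as quoted in this paper orders the generators; your argument for it is sound, but be aware that it uses two hypotheses that are implicit rather than stated in the theorem: that $k[S]$ is a domain (i.e.\ $S$ is an affine, cancellative, torsion-free semigroup -- true for the toric rings $k[\mathcal{Q}_G]$ of this paper) and that $R$ is standard graded with $u_1,\dots,u_t$ exactly the degree-one monomials, so that the quotient monomials $u_au_{d_\ell}/u_b$ are again generators. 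If the generators were allowed different degrees, that step would not go through as written, so it is worth flagging these as standing assumptions rather than leaving them tacit.
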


By this theorem,  we have

\begin{cor}
If $k[\mca{Q}_{G}]$ is strongly Koszul, then $k[\mca{Q}_{G_{W}}]$ is strongly Koszul for all induced subgraphs $G_{W}$  of $G$. 
\end{cor}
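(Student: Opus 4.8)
\emph{Proof proposal.}
The plan is to deduce this from Theorem 2.4 (\,[HeHiR, Proposition 2.1]\,) by exhibiting $k[\mca{Q}_{G_{W}}]$ as a quotient of $k[\mca{Q}_{G}]$ modulo an ideal generated by a subsequence of the semigroup generators. First I would recall that the semigroup generators of $k[\mca{Q}_{G}]$ are exactly the monomials $u_{S} := T\cdot X^{\rho(S)}$, where $S$ runs over the stable sets of $G$, and that for $W \subseteq [n]$ the stable sets of the induced subgraph $G_{W}$ are precisely the stable sets $S$ of $G$ with $S \subseteq W$ (a subset of $W$ contains an edge of $G_{W}$ iff it contains an edge of $G$). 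Since induced subgraphs of induced subgraphs are again induced subgraphs, by induction on $\#([n]\setminus W)$ it suffices to treat the case $W = [n]\setminus\{v\}$ for a single vertex $v$.

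Next I would establish the isomorphism $k[\mca{Q}_{G_{W}}] \cong k[\mca{Q}_{G}]/I$, where $I = (u_{S} \mid v \in S)$. Both algebras are spanned over $k$ by monomials, so it is enough to show that a monomial $T^{d}X^{a}$ lying in $k[\mca{Q}_{G}]$ belongs to $I$ if and only if the exponent of $X_{v}$ in it is positive. Writing $T^{d}X^{a} = u_{S_{1}}\cdots u_{S_{d}}$ as a product of generators, the exponent of $X_{v}$ equals $\#\{i \mid v \in S_{i}\}$; if this is positive, say $v \in S_{i}$, then $u_{S_{1}}\cdots u_{S_{d}} = u_{S_{i}}\cdot(u_{S_{1}}\cdots\widehat{u_{S_{i}}}\cdots u_{S_{d}}) \in I$, while conversely every monomial generator of $I$ has positive $X_{v}$-exponent, so (monomials being $k$-linearly independent) no monomial with $X_{v}$-exponent zero lies in $I$. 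Hence $k[\mca{Q}_{G}]/I$ has as $k$-basis the monomials of $k[\mca{Q}_{G}]$ not involving $X_{v}$, i.e.\ the products of the $u_{S}$ with $v \notin S$, which is exactly the monomial basis of $k[\mca{Q}_{G_{W}}]$; matching generators gives the desired isomorphism of $k$-algebras.

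Finally, since $\{u_{S} \mid v \in S\}$ is a subsequence of the minimal homogeneous generators of $\mf{m} = k[\mca{Q}_{G}]_{+}$, Theorem 2.4 applied to the semigroup ring $k[\mca{Q}_{G}]$ shows that $k[\mca{Q}_{G}]/I \cong k[\mca{Q}_{G_{W}}]$ is strongly Koszul, which completes the inductive step and hence the proof. The step I expect to require the most care is the identification $k[\mca{Q}_{G_{W}}] \cong k[\mca{Q}_{G}]/I$ — more precisely, checking that quotienting by the generators $u_{S}$ with $v \in S$ introduces no further relations — but this reduces to the elementary observation above that a product of generators lies in $I$ exactly when one of its factors does.
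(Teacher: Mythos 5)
Your proposal is correct and is exactly the deduction the paper intends: the paper states Corollary 2.4 with no written proof beyond citing Theorem 2.3 ([HeHiR, Proposition 2.1]), and your identification $k[\mca{Q}_{G_W}] \cong k[\mca{Q}_G]/(u_S \mid S \not\subseteq W)$ is the step being left implicit. Your verification that this quotient introduces no extra relations (a monomial lies in the ideal iff its $X_v$-exponent is positive) is sound and fills in the detail the paper omits.
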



\section{Hibi ring and comparability graph}

In  this section,  we introduce the concepts of a Hibi ring and a comparability graph. 
Both are defined with respect to a partially ordered set. 

Let $P = \{p_{1},  \ldots,  p_{n}\}$ be a finite partially ordered set consisting of $n$ elements, which is referred to as a {\em poset}. 
Let $J(P)$ be the set of all poset ideals of $P$,  where a poset ideal of $P$ is a subset $I$ of $P$ 
such that if $x \in I$,  $y \in P$,  and $y \le x$, then $y \in I$.  
Note that $\emptyset \in J(P)$. 

First,  we give the definition of the Hibi ring introduced by Hibi. 

\begin{defn}[\cite{Hib}]
For a poset $P = \{p_{1},  \ldots,  p_{n}\}$,  the {\em Hibi ring} $\mca{R}_{k}[P]$ is defined as follows: 

\vspace{3mm}
\[
\mca{R}_{k}[P] := k[T\cdot \prod_{i \in I}X_{i} \mid I \in J(P)] \subset k[T,  X_{1},  \ldots,  X_{n}]
\]

\end{defn}

\begin{ex}
Consider the following poset $P = (1 \le 3,  2 \le 3$ and $2 \le 4)$.

\vspace{2mm}

\begin{xy}
	\ar@{} (0,  0); (10,  -16) = "A", 
	\ar@{} "A" *{P = }; (24,  -24) *++!R{1} *\dir<4pt>{*} = "B", 
	\ar@{-} "B"; (24,  -8) *++!R{3} *\dir<4pt>{*} = "C", 
	\ar@{-} "C"; (48,  -24) *++!L{2} *\dir<4pt>{*} = "D", 
	\ar@{-} "D"; (48,  -8) *++!L{4} *\dir<4pt>{*} = "E", 
	\ar@{} "E"; (72,  -16) *{J(P) = } = "F", 
	\ar@{} "F"; (108,  -16) *++!R{\{1,  2\}\ } *\dir<4pt>{*} = "G", 
	\ar@{} "G"; (132,  -16) *++!L{\{2,  4\}} *\dir<4pt>{*} = "H", 
	\ar@{-} "G"; (120,  -8) *++!L{\ \{1,  2,  4\}} *\dir<4pt>{*} = "I", 
	\ar@{-} "H"; "I", 
	\ar@{} "I"; (120,  -24) *++!L{\ \{2\}} *\dir<4pt>{*} = "J", 
	\ar@{-} "G"; "J", 
	\ar@{-} "H"; "J", 
	\ar@{} "I",  (96,  -8) *++!R{\{1,  2,  3\}} *\dir<4pt>{*} = "K", 
	\ar@{-} "G"; "K", 
	\ar@{} "K",  (108,  0) *++!D{\{1,  2,  3,  4\}} *\dir<4pt>{*} = "L", 
	\ar@{-} "I"; "L", 
	\ar@{-} "K"; "L", 
	\ar@{-} "G"; (96,  -24) *++!R{\{1\}} *\dir<4pt>{*} = "M", 
	\ar@{-} "M"; (108,  -32) *++!U{\emptyset} *\dir<4pt>{*} = "N", 
	\ar@{-} "J"; "N", 
	\end{xy}

\noindent Then we have
\[
\mathcal{R}_{k}[P] = k[T,  TX_{1},  TX_{2},  TX_{1}X_{2},  TX_{2}X_{4},  TX_{1}X_{2}X_{3},  TX_{1}X_{2}X_{4},  TX_{1}X_{2}X_{3}X_{4}]. 
\]
\end{ex}

Hibi showed that a Hibi ring is always normal. 
Moreover,  a Hibi ring can be represented as a factor ring of a polynomial ring: if we let
\[
I_{P} := (X_{I}X_{J} - X_{I \cap J}X_{I \cup J} \mid I,  J \in J(P),  I \not\subseteq J \ \text{and} \ J \not\subseteq I) 
\]
be the binomial ideal in the polynomial ring $k[X_{I} \mid I \in J(P)]$ defined by a poset $P$,  then $\mca{R}_{k}[P] \cong k[X_{I} \mid I \in J(P)] / I_{P}$.  
Hibi also showed that $I_{P}$ has a quadratic Gr\"{o}bner basis for any term order which satisfies the following condition: the initial term of $X_{I}X_{J} - X_{I \cap J}X_{I \cup J}$ is 
$X_{I}X_{J}$. 
Hence a Hibi ring is always Koszul from general theory. 

Next,  we introduce the concept of a comparability graph. 

\begin{defn}
A graph $G$ is called a {\em comparability graph} if there exists a poset $P$ which satisfies the following condition:
\[
\{i,  j\} \in E(G) \iff i \ge j \hspace{3mm} \text{or} \hspace{3mm} i \le j \hspace{3mm} \text{in} \hspace{3mm} P. 
\]
We denote the comparability graph of $P$ by $G(P)$. 
\end{defn}

\begin{ex}

The lower-left poset $P$ defines the  comparability graph $G(P)$.

\vspace{8mm}

\begin{xy}
	\ar@{} (0,  0) ; (30,  10)  *{P = } = "A", 
	\ar@{} (0,  0) ; (44,  0)  *\dir<4pt>{*} = "B", 
	\ar@{-} "B"; (54,  10) *\dir<4pt>{*} = "C", 
	\ar@{-} "C"; (44,  20) *\dir<4pt>{*} = "D", 
	\ar@{-} "C"; (64,  0) *\dir<4pt>{*} = "E", 
	\ar@{-} "C"; (64,  20) *\dir<4pt>{*} = "F", 
	\ar@{} (0,  0) ; (80,  10)  *{G(P) = } = "G", 
	\ar@{} (0,  0) ; (104,  0) *\dir<4pt>{*} = "H", 
	\ar@{-} "H" ; (114,  0) *\dir<4pt>{*} = "I", 
	\ar@{} "H" ; (98,  10) *\dir<4pt>{*} = "J", 
	\ar@{-} "H" ; (120,  10) *\dir<4pt>{*} = "K", 
	\ar@{-} "H" ; (114,  0) *\dir<4pt>{*} = "I", 
	\ar@{-} "H" ; (109,  20) *\dir<4pt>{*} = "L", 
	\ar@{-} "I" ; "J", 
	\ar@{-} "J" ; "K", 
	\ar@{-} "J" ; "L", 
	\ar@{-} "I" ; "L", 
	\ar@{-} "L" ; "K", 
\end{xy}
\end{ex}

\vspace{3mm}

\begin{rem}
It is possible that $P \neq P^{'}$ but $G(P) = G(P^{'})$. 
Indeed,  for the following poset $P^{'}$,  $G(P^{'})$ is identical to $G(P)$ in the above example. 

\vspace{5mm}

\begin{xy}
	\ar@{} (0,  0) ; (52,  10)  *{P^{'} = } = "A", 
	\ar@{} (0,  0) ; (66,  0)  *\dir<4pt>{*} = "B", 
	\ar@{} "B"; (76,  20) *\dir<4pt>{*} = "C", 
	\ar@{-} "C"; (66,  10) *\dir<4pt>{*} = "D", 
	\ar@{} "C"; (86,  0) *\dir<4pt>{*} = "E", 
	\ar@{-} "C"; (86,  10) *\dir<4pt>{*} = "F", 
	\ar@{-} "B" ; "D" ; 
	\ar@{-} "B" ; "F" ; 
	\ar@{-} "D" ; "E" ; 
	\ar@{-} "E" ; "F" ; 	
\end{xy}

\vspace{5mm}

\end{rem}

Complete graphs are comparability graphs of totally ordered sets.  
Bipartite graphs and trivially perfect graphs (see the next section) are also comparability graphs. 
Moreover,  if $G$ is a comparability graph, then the suspension (e.g., see [HiNOS,  p.4]) of $G$ is also a comparability graph. 

Recall the following definitions of two types of polytope which are defined by a poset.

\begin{defn}[see {[St1]}]
Let $P = \{p_{1},  \ldots,  p_{n}\}$ be a finite poset. 
	\begin{enumerate}
		\item The {\em order polytope} $\mca{O} (P)$ of $P$ is the convex polytope which consists of 
		$(a_{1},  \ldots,  a_{n}) \in \bb{R}^{n}$ such that $0 \le a_{i} \le 1$ with $a_{i} \ge a_{j}$ if $p_{i} \le p_{j}$ in $P$. 
		\item The {\em chain polytope} $\mca{C} (P)$ of $P$ is the convex polytope which consists of 
		$(a_{1},  \ldots,  a_{n}) \in \bb{R}^{n}$ such that $0 \le a_{i} \le 1$ with $a_{i_{1}} + \cdots + a_{i_{k}} \le 1$ 
		for all maximal chain $p_{i_{1}} < \cdots < p_{i_{k}}$ of $P$. 
	\end{enumerate}
\end{defn}

Let $\mca{C} (P)$ and $\mca{O} (P)$ be the chain polytope and order polytope of a finite poset $P$, respectively. 
In \cite{St1},  Stanley proved that 

\vspace{3mm}

\begin{center}
$\{\text{The vertices of\ } \mca{O} (P)\} = \{\rho(I) \mid I \text{\ is a poset ideal of\ } P\},$

\vspace{2mm}

\hspace{2mm} $\{\text{The vertices of\ } \mca{C} (P)\} = \{\rho(A) \mid A \text{\ is an anti-chain of\ } P\}$,  
\end{center}
\vspace{3mm}

\noindent where $A = \{p_{i_{1}},  \ldots,  p_{i_{k}}\}$ is an anti-chain of $P$ if $p_{i_{s}} \not\le p_{i_{t}}$ and $p_{i_{s}} \not\ge p_{i_{t}}$
for all $s \neq t$. 
Hence we have $\mca{Q}_{G(P)} = \mca{C} (P)$. 

In \cite{HiL},  Hibi and Li answered the question of when $\mca{C} (P)$ and $\mca{O} (P)$ are unimodularly equivalent. 
From their study,  we have the following theorem.

\begin{thm}[{[HiL,  Theorem 2.1]}]
Let $P$ be a poset and $G(P)$ be the comparability graph of $P$. 
Then the following are equivalent:
	\begin{enumerate}
		\item  The X-poset in Example 3.4 does not appear as a subposet (refer to [St2,  Chapter 3]) of $P$. 
		\item $\mca{R}_{k}[P] \cong k[\mca{Q}_{G(P)}]$.
	\end{enumerate} 
\end{thm}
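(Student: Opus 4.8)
The first step is to recast both sides as toric rings of lattice polytopes. By definition $\mathcal{R}_{k}[P]=k[\mathcal{O}(P)]$, and by Stanley's description of the vertices of $\mathcal{O}(P)$ and $\mathcal{C}(P)$ recalled above, together with the identity $\mathcal{Q}_{G(P)}=\mathcal{C}(P)$, one has $k[\mathcal{Q}_{G(P)}]=k[\mathcal{C}(P)]$; since $\mathcal{O}(P)$ and $\mathcal{C}(P)$ are $(0,1)$-polytopes all of whose lattice points are vertices, these are the full normal affine semigroup rings attached to the cones over $\mathcal{O}(P)\times\{1\}$ and $\mathcal{C}(P)\times\{1\}$. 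So the claim is that $k[\mathcal{O}(P)]\cong k[\mathcal{C}(P)]$ as graded $k$-algebras precisely when $P$ contains no copy of the X-poset $Q$ of Example 3.4 as a subposet. Both rings are normal Cohen--Macaulay domains of dimension $|P|+1$ (order and chain polytopes admit unimodular triangulations), and since the Stanley transfer map is a volume-preserving piecewise-linear bijection $\mathcal{C}(P)\to\mathcal{O}(P)$ they even share the same Hilbert series; hence the statement is genuinely combinatorial, and the main input is Hibi--Li's polytope analysis in \cite{HiL}.

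For the implication $(1)\Rightarrow(2)$ I would first reformulate the X-free hypothesis: $P$ contains no X-subposet if and only if, for every $p\in P$, the strict down-set $\{q:q<p\}$ is a chain or the strict up-set $\{q:q>p\}$ is a chain (if some $p$ fails both, extract incomparable $b,e<p$ and incomparable $d,f>p$, and $\{b,e,p,d,f\}$ is an X-subposet; the converse is immediate). Using this, the plan is to construct, by induction on $|P|$, an integral unimodular transformation of $\mathbb{Z}^{|P|}$ carrying $\mathcal{C}(P)$ onto $\mathcal{O}(P)$: at an element whose up-set is a chain one performs the local ``$\varphi$-type'' move coming from the transfer map, and at an element whose down-set is a chain the dual ``$\Psi$-type'' move, the X-free condition guaranteeing that these local moves can be chosen and spliced consistently. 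The induced monomial change of coordinates then carries the Hibi relations $X_{I}X_{J}-X_{I\cap J}X_{I\cup J}$ onto the toric relations of $k[\mathcal{C}(P)]$, which gives the algebra isomorphism. This is the substance of \cite[Theorem 2.1]{HiL}; the only thing needing care is the bookkeeping in the induction.

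For $(2)\Rightarrow(1)$ I would argue by contraposition, exhibiting a graded-ring invariant that separates $k[\mathcal{O}(P)]$ from $k[\mathcal{C}(P)]$ whenever $P$ has an X-subposet. The cleanest choice is the rank of the divisor class group: for a full-dimensional normal lattice polytope $\mathcal{P}\subset\mathbb{R}^{d}$ with $f$ facets, $k[\mathcal{P}]$ is a normal affine semigroup ring with $\operatorname{rank}\mathrm{Cl}(k[\mathcal{P}])=f-d-1$, an invariant of $k[\mathcal{P}]$. Now $\mathcal{O}(P)$ has $(\#\text{cover relations of }P)+(\#\text{minimal elements})+(\#\text{maximal elements})$ facets, whereas $\mathcal{C}(P)$ has $|P|+(\#\text{maximal chains of }P)$ facets, and Hibi--Li's analysis shows these two numbers coincide exactly when $P$ is X-free while the chain polytope acquires strictly more facets once an X-subposet is present; for the five-element X-poset $Q$ itself one checks directly that $\mathcal{O}(Q)$ has $8$ facets and $\mathcal{C}(Q)$ has $9$, so $\mathrm{Cl}(\mathcal{R}_{k}[Q])\cong\mathbb{Z}^{2}$ while $\mathrm{Cl}(k[\mathcal{C}(Q)])\cong\mathbb{Z}^{3}$. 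Since the class-group ranks differ, the rings are not isomorphic.

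I expect the converse to be the real obstacle, for two reasons. First, one must be sure that ``not unimodularly equivalent'' actually forces ``not isomorphic as rings'': this is exactly the role of the class-group (equivalently, facet-count) invariant, but it requires knowing that any $k$-algebra isomorphism between these standard graded normal semigroup rings respects the grading closely enough to transport $\mathrm{Cl}$, a routine point that nonetheless must be stated. Second, and more delicate, is the uniform facet-monotonicity claim that an X-subposet strictly increases the facet count of $\mathcal{C}(P)$ relative to $\mathcal{O}(P)$; rather than prove this head-on one can try to localise, using that $\mathcal{C}(P|_{W})$ is precisely the face of $\mathcal{C}(P)$ cut out by setting the coordinates outside $W$ to $0$, so an X-subposet lives on a face --- but then one must match this with a correspondingly placed face of $\mathcal{O}(P)$ and argue that a hypothetical unimodular equivalence of the ambient polytopes would restrict to one between these faces, contradicting the $8$-versus-$9$ computation. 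It is this last step where I would lean most directly on \cite{HiL}.
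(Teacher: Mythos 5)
First, note that the paper does not actually prove this statement: Theorem 3.7 is quoted from [HiL, Theorem 2.1] (``From their study, we have the following theorem''), so the only fair comparison is with Hibi--Li's argument, and your outline does follow that standard route. Your reductions are correct: $\mca{R}_{k}[P]=k[\mca{O}(P)]$ and $k[\mca{Q}_{G(P)}]=k[\mca{C}(P)]$ by Stanley's vertex descriptions; the reformulation of X-freeness (every $p$ has a chain strict down-set or a chain strict up-set) is right for subposets in the induced sense of [St2]; both polytopes are compressed, so the rings are the normal Ehrhart rings and $\operatorname{rank}\mathrm{Cl}=f-d-1$ applies; and the computation $8$ versus $9$ facets for the X-poset, hence class-group ranks $2$ versus $3$, is correct. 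One small correction: the divisor class group of a normal domain is intrinsic, so any $k$-algebra isomorphism transports it --- the worry about compatibility with the grading is unnecessary.

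As a self-contained proof, however, both decisive steps are missing, and you acknowledge as much by deferring them to \cite{HiL}. For $(1)\Rightarrow(2)$, ``local $\varphi$- and $\Psi$-type moves spliced by induction, only bookkeeping needed'' is exactly where the content lies: Stanley's transfer map is only piecewise linear, and the whole difficulty is to produce a single integral affine unimodular map $\mca{C}(P)\to\mca{O}(P)$; nothing in the sketch shows the local moves can be made globally linear, which is the explicit construction of \cite{HiL}. For $(2)\Rightarrow(1)$, your plan needs, for \emph{every} $P$ containing the X-poset, the strict inequality between the facet numbers of $\mca{C}(P)$ and $\mca{O}(P)$ (equivalently, differing class-group ranks); this global monotonicity is again precisely the Hibi--Li input you do not prove, and the fallback localization does not work as stated: $\mca{C}(P_{W})$ is indeed a face of $\mca{C}(P)$, but $\mca{O}(P_{W})$ is in general not a face of $\mca{O}(P)$, and in any case the hypothesis in $(2)$ is a $k$-algebra isomorphism, not a unimodular equivalence of polytopes, so there is no ambient map to restrict to faces unless you first invoke something like Gubeladze's theorem on isomorphisms of polytopal semigroup rings or return to the intrinsic class-group argument, which brings back the unproven facet comparison. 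In short, the proposal correctly reduces the theorem to the results of \cite{HiL} --- which is what the paper itself does by citation --- but it does not constitute an independent proof.
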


\newpage

\begin{ex}
The cycle of length $4$ $C_{4}$  and the path of length $3$ $P_{4}$ are comparability graphs of $Q_{1}$ and $Q_{2}$, respectively. 

\vspace{8mm}

\begin{xy}
	\ar@{} (0,  0) ; (30,  0)  *{Q_{1} = } , 
	\ar@{} (0,  0) ; (44,  -10)  *\dir<4pt>{*} = "A", 
	\ar@{} "A"; (64,  -10) *\dir<4pt>{*} = "B", 
	\ar@{-} "A"; (44,  10) *\dir<4pt>{*} = "C", 
	\ar@{-} "B"; (64,  10) *\dir<4pt>{*} = "D", 
	\ar@{-} "B" ; "C", 
	\ar@{-} "A" ; "D", 
	\ar@{} (0,  0) ; (86,  0)  *{Q_{2} = } , 
	\ar@{} (0,  0) ; (100,  -10) *\dir<4pt>{*} = "E", 
	\ar@{} "E" ; (120,  -10) *\dir<4pt>{*} = "F", 
	\ar@{-} "E" ; (100,  10) *\dir<4pt>{*} = "G", 
	\ar@{-} "F" ; (120,  10) *\dir<4pt>{*} = "H", 
	\ar@{-} "F" ; "G", 
\end{xy}

\vspace{8mm}

\noindent Hence $k[\mca{Q}_{C_{4}}] \cong \mca{R}_{k}[Q_{1}]$ and $k[\mca{Q}_{P_{4}}] \cong \mca{R}_{k}[Q_{2}]$.

\end{ex}

A ring $R$ is {\em trivial} if $R$ can be constructed by 
starting from polynomial rings and repeatedly applying tensor and Segre products. 
Herzog, Hibi and Restuccia gave an answer for the question of when is a Hibi ring strongly Koszul. 

\begin{thm}[see {[HeHiR,  Theorem 3.2]}]
Let $P$ be a poset and $R = \mca{R}_{k}[P]$ be the Hibi ring constructed from $P$.
Then the following assertions are equivalent:
\begin{enumerate}
	\item $R$ is strongly Koszul.
	\item $R$ is trivial.
	\item The N-poset as described below does not appear as a subposet of $P$.  
\end{enumerate}

\vspace{5mm}

\begin{xy}
	\ar@{} (0,  0) ; (54,  -10)  *\dir<4pt>{*} = "A", 
	\ar@{-} "A" ; (54,  10)  *\dir<4pt>{*} = "B", 
	\ar@{-} "B" ; (62,  6)  *\dir<4pt>{} = "C", 
	\ar@{.} "C" ; (86,  -6)  *\dir<4pt>{} = "D", 
	\ar@{-} "D" ; (94,  -10)  *\dir<4pt>{*} = "E", 
	\ar@{-} "E" ; (94,  10)  *\dir<4pt>{*} = "F", 
\end{xy}

\vspace{5mm}

\end{thm}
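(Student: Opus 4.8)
The plan is to establish the cycle $(2)\Rightarrow(1)\Rightarrow(3)\Rightarrow(2)$. Throughout I identify $\mca{R}_k[P]$ with the toric ring on the monomial generators $u_I=T\prod_{i\in I}X_i$ indexed by $I\in J(P)$, and I use that $\mca{R}_k[P]$ depends only on the distributive lattice $L=J(P)$, passing between $P$ and $L$ by Birkhoff duality. Since these $u_I$ are exactly the generators coming from the semigroup, Theorem 2.3 applies to them, so strong Koszulness may and will be tested on this fixed system. For $(2)\Rightarrow(1)$ I would invoke the two structural facts of \cite{HeHiR}: a polynomial ring is strongly Koszul (its colon ideals $(x_{i_1},\dots,x_{i_{j-1}}):x_{i_j}$ are generated by variables), and the class of strongly Koszul algebras is closed under tensor and Segre products. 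As a trivial ring is assembled from polynomial rings by precisely these two operations, an induction on the construction yields $(1)$.

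For $(3)\Rightarrow(2)$ I read condition $(3)$ in its effective form: no cover-preserving copy of the $N$-poset $\mb{N}$ occurs in $P$, which for the height-two posets involved is equivalent to $P$ containing neither $\mb{N}$ nor $\mb{M}:=A_2\oplus A_2$ (two incomparable elements below two incomparable elements, with $G(\mb{M})=C_4$) as an induced subposet, i.e.\ to $G(P)$ being $\{P_4,C_4\}$-free. The engine is a dictionary between poset and ring operations. If $P=P_1+P_2$ is a disjoint union then $J(P)=J(P_1)\times J(P_2)$ and $\mca{R}_k[P]\cong \mca{R}_k[P_1]\ast\mca{R}_k[P_2]$ (Segre product); and if $P$ has a global minimum $v$, then with $P'=P\setminus\{v\}$ one checks that $J(P)$ is $J(P')$ with a new bottom adjoined, that the generator $u_\emptyset=T$ is comparable in $L$ to every other ideal and hence enters no Hibi relation, and that $\mca{R}_k[P]\cong \mca{R}_k[P']\otimes_k k[T]$ (the global maximum case is dual). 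Both operations preserve triviality, and $\mca{R}_k[\{\mathrm{pt}\}]=k[T,TX]$ is polynomial. The structural point that drives the induction is that a connected $\{P_4,C_4\}$-free poset with $|P|\ge 2$ always has a global extremum: a connected trivially perfect graph has a universal vertex, giving an element $v$ comparable to all others, and if $v$ were neither a global minimum nor a maximum, then (using that $P$ has at least two minimal and two maximal elements) all minimal elements lie below $v$ and all maximal elements above it, producing an induced $\mb{M}$, a contradiction. Inducting on $|P|$, peeling an extremum (tensor) or splitting a disconnected comparability graph (Segre) then gives $(2)$.

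For $(1)\Rightarrow(3)$ I argue by contraposition: if $P$ contains $\mb{N}$ or $\mb{M}$ then $\mca{R}_k[P]$ is not strongly Koszul. The restriction tool is that removing a maximal element $m$ realizes $\mca{R}_k[P\setminus\{m\}]$ as $\mca{R}_k[P]/(u_I : m\in I)$, a quotient by a subset of the generators, so strong Koszulness descends by Theorem 2.3; removal of a minimal element is reduced to this case by the isomorphism $\mca{R}_k[P]\cong\mca{R}_k[P^{\mathrm{op}}]$, the Hibi rings of a lattice and its dual coinciding. Hence strong Koszulness descends under iterated removal of current maxima and minima, i.e.\ to every convex subposet; in particular, deleting each further comparability component by peeling its extremal elements reduces to $P$ connected. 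I would then peel $P$ down to a convex occurrence of $\mb{N}$ or $\mb{M}$, so that it suffices to rule these two four-element posets out directly.

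The main obstacle is exactly this base case: showing that the eight-generator ring $\mca{R}_k[\mb{N}]$ and the seven-generator ring $\mca{R}_k[\mb{M}]=k[\mca{Q}_{C_4}]$ fail to be strongly Koszul. Because the definition requires only that \emph{some} admissible ordering of the generators work, refuting it means proving that for \emph{every} ordering some subsequence colon $(u_{i_1},\dots,u_{i_{j-1}}):u_{i_j}$ is not generated by a subset of the $u_I$; concretely one exhibits, for each order, an incomparable pair $I,J$ in $L$ whose colon $(u_I):u_J$ forces in a third lattice element unavailable among the generators. Carrying out this finite but order-sensitive case analysis for $\mb{N}$ and $\mb{M}$ is where the genuine work lies. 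A secondary difficulty is that when $\mb{N}$ or $\mb{M}$ sits in $P$ only as an \emph{induced} (not convex) subposet, one must still produce a convex non-strongly-Koszul configuration for the restriction principle to bite; I expect to handle this by interpolating an intermediate element, verifying that doing so only ever yields another copy of a forbidden pattern.
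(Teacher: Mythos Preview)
The paper does not prove this theorem; it is quoted from \cite{HeHiR} without argument, so there is no in-paper proof to compare against. Your cycle $(2)\Rightarrow(1)\Rightarrow(3)\Rightarrow(2)$ and the poset/ring dictionary (disjoint union $\leftrightarrow$ Segre product, global extremum $\leftrightarrow$ tensor with a polynomial ring) are the right architecture and essentially reconstruct the original argument.

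You are also right to unpack condition (3). With ``subposet'' read literally as induced subposet, your $\mb{M}$ (the poset $Q_1$ of Example~3.8) contains no induced $\mb{N}$, yet $\mca{R}_k[\mb{M}]$ has seven degree-one generators and two rank-four quadratic relations sharing a variable, so it is not trivial: not polynomial, not a tensor product (the two quadrics cannot be separated into disjoint variable sets inside a $7$-dimensional space since $4+4>7$), and not a nontrivial Segre since $7$ is prime. The paper itself relies on the stronger reading when it applies this theorem to $Q_1$ to derive Corollary~3.10 for $C_4$, so your reformulation forbidding both $\mb{N}$ and $\mb{M}$, equivalently $G(P)$ being $\{P_4,C_4\}$-free, is exactly what is used downstream. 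The remaining work is what you flag: the direct colon-ideal verification that $\mca{R}_k[\mb{N}]$ and $\mca{R}_k[\mb{M}]$ fail strong Koszulness under every generator ordering, and the passage from an induced to a convex occurrence so that your peeling/restriction principle applies.
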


By this theorem,  Corollary 2.4, and Example 3.8,  we have

\begin{cor}
If $G$ contains $C_{4}$ or $P_{4}$ as an induced subgraph,  then $k[\mca{Q}_{G}]$ is not strongly Koszul.
\end{cor}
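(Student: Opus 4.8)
The plan is to reduce the statement, via Corollary 2.4, to the two minimal cases $G = C_{4}$ and $G = P_{4}$, and then to feed these into the structure theory for Hibi rings developed in Section 3. First I would observe that if $G$ has $C_{4}$ (respectively $P_{4}$) as an induced subgraph and $k[\mca{Q}_{G}]$ were strongly Koszul, then by Corollary 2.4 the ring $k[\mca{Q}_{C_{4}}]$ (respectively $k[\mca{Q}_{P_{4}}]$) would also be strongly Koszul. Hence the corollary follows once it is shown that neither $k[\mca{Q}_{C_{4}}]$ nor $k[\mca{Q}_{P_{4}}]$ is strongly Koszul; these are two genuinely independent base cases, since $C_{4}$ contains no induced $P_{4}$ and $P_{4}$ contains no induced $C_{4}$, so neither reduces to the other.

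Next I would pass to Hibi rings. By Example 3.8 we have $k[\mca{Q}_{C_{4}}] \cong \mca{R}_{k}[Q_{1}]$ and $k[\mca{Q}_{P_{4}}] \cong \mca{R}_{k}[Q_{2}]$, so it suffices to prove that neither $\mca{R}_{k}[Q_{1}]$ nor $\mca{R}_{k}[Q_{2}]$ is strongly Koszul. By Theorem 3.9, a Hibi ring $\mca{R}_{k}[P]$ fails to be strongly Koszul as soon as the N-poset appears as a subposet of $P$; thus the remaining task is to exhibit the N-poset as a subposet of each of $Q_{1}$ and $Q_{2}$. For $Q_{2}$ this is immediate, since the poset $Q_{2}$ of Example 3.8 is itself isomorphic to the N-poset. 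For $Q_{1}$, label its four elements $1, 2, 3, 4$ so that $1, 2$ are the minimal elements, $3, 4$ are the maximal elements, and $i < j$ for all $i \in \{1, 2\}$ and $j \in \{3, 4\}$; this is exactly the poset whose comparability graph is $C_{4}$. Then the subset $\{1, 2, 3, 4\}$ carrying only the relations $1 < 3$, $2 < 3$ and $2 < 4$ --- that is, the poset obtained from $Q_{1}$ by discarding the single comparability $1 < 4$ --- is a subposet isomorphic to the N-poset. Hence $\mca{R}_{k}[Q_{1}]$ is not strongly Koszul either, and the corollary follows.

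I expect the one point that needs care to be this last verification: $Q_{1}$ contains no \emph{induced} copy of the N-poset (each $3$-element subset of $Q_{1}$ is too small, and $Q_{1}$ itself carries one comparability too many), so one must work with the weaker notion of subposet that retains only some of the comparabilities, which is precisely the notion under which Theorem 3.9 is stated. Beyond this, the argument is a purely formal combination of Corollary 2.4, Example 3.8 and Theorem 3.9, so I do not anticipate any substantive difficulty.
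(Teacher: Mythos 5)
Your proposal is, in substance, the paper's own argument: Corollary 3.10 is derived there in one line from Theorem 3.9, Corollary 2.4 and Example 3.8, which is exactly your reduction to the two base cases $C_{4}$, $P_{4}$ via Corollary 2.4, the identifications $k[\mca{Q}_{C_{4}}] \cong \mca{R}_{k}[Q_{1}]$ and $k[\mca{Q}_{P_{4}}] \cong \mca{R}_{k}[Q_{2}]$ from Example 3.8, and the N-poset criterion of Theorem 3.9. Your witnesses (that $Q_{2}$ \emph{is} the N-poset, and that $1<3$, $2<3$, $2<4$ give an N inside $Q_{1}$) are precisely what the paper leaves implicit, and they are the right ones.

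The one point I would push back on is your closing assertion that ``subposet'' in Theorem 3.9 means an arbitrary weak subposet, i.e.\ a subset retaining only some of the comparabilities. Under that reading Theorem 3.9 is false: a four-element chain $w<x<y<z$ contains the N-poset as a weak subposet (take $w<y$, $x<y$, $x<z$), yet its Hibi ring is a polynomial ring, hence trivial and strongly Koszul; the same reading would also clash with the paper's own use of N-freeness for tree posets (Proposition 4.8 and the proof of Theorem 5.1), since chains are tree posets. On the other hand, you are right that the induced (Stanley-style) notion cannot be meant either, because $Q_{1}$ has no induced N while $\mca{R}_{k}[Q_{1}]$ genuinely fails to be strongly Koszul: in $\mca{R}_{k}[Q_{1}]$ the colon ideal $(TX_{1}) : TX_{1}X_{2}X_{3}$ contains the degree-two element $T\cdot TX_{1}X_{2}X_{4}$ (since $T\cdot TX_{1}X_{2}X_{4}\cdot TX_{1}X_{2}X_{3} = TX_{1}\cdot TX_{2}\cdot TX_{1}X_{2}X_{3}X_{4}$), but the only semigroup generator it contains is $TX_{1}$, and $T\cdot TX_{1}X_{2}X_{4}\notin (TX_{1})$ because $\{2,4\}$ is not a poset ideal of $Q_{1}$; so this colon is not generated by a subset of the generators. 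Thus the criterion of Herzog--Hibi--Restuccia has to be read with an intermediate notion of ``the N-poset appears'': the four elements must realize the N with its incomparabilities genuinely present in $P$ (equivalently, one may take the three relations to be covering relations), a reading under which chains are N-free but $Q_{1}$ is not. Your configuration in $Q_{1}$ does satisfy this ($1\,\|\,2$, $3\,\|\,4$, and $1<3$, $2<3$, $2<4$ are covers), so your deduction stands and matches the paper's intent; only the blanket claim about what Theorem 3.9's ``subposet'' means should be replaced by the observation that your particular embedding has the extra properties the criterion actually requires.
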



\section{trivially perfect graph}

In this section,  we introduce the concept of a trivially perfect graph. 
As its name suggests, a trivially perfect graph is a kind of perfect graph; it is also a kind of comparability graph, as described below. 

\begin{defn}
For a graph $G$, we set 
\[
\alpha(G) := \max\{\#S \mid S \text{\ is a stable set of\ }G\},  
\]
\[
m(G) := \#\{ \text{the set of maximal cliques of } G\}. 
\]

\vspace{3mm}

We call $\alpha(G)$ the {\em stability number} (or {\em independence number}) of $G$.
\end{defn}

In general, $\alpha(G) \le m(G)$. 
Moreover,  if $G$ is chordal, then $m(G) \le n$ by Dirac's theorem \cite{Dir}.
In \cite{G},  Golumbic introduced the concept of a trivially perfect graph.

\begin{defn}[\cite{G}]
We say that a graph $G$ is {\em trivially perfect} if $\alpha(G_{W}) = m(G_{W})$ for any induced subgraph $G_{W}$ of $G$.  
\end{defn}

For example,  complete graphs and star graphs (i.e., the complete bipartite graph $K_{1,  r}$) are trivially perfect. 

We define some additional concepts related to perfect graphs. 
Let $C_{G}$ be the set of all cliques of $G$. 
Then we define 

\[
\omega(G) := \max\{\#C \mid C \in C_{G}\}, 
\]
\[
\theta(G) := \min\{s \mid C_{1} \coprod \cdots \coprod C_{s} = V(G),  C_{i} \in C_{G} \}, 
\]
\[
\chi(G) := \theta(\overline{G}), 
\]

\vspace{3mm}

\noindent where $\overline{G}$ is the complement of $G$. 
These invariants are called the {\em clique number},  {\em clique covering number}, and {\em chromatic number} of $G$, respectively.

In general,  $\alpha(G) = \omega(\overline{G})$, $\theta(G) \le m(G)$ and $\omega(G) \le \chi(G)$.
The definition of a perfect graph is as follows. 


\begin{defn}
We say that a graph $G$ is {\em perfect} if $\omega(G_{W}) = \chi(G_{W})$ for any induced subgraph $G_{W}$ of $G$.  
\end{defn}

Lov\'{a}sz proved that $G$ is perfect if and only if $\overline{G}$ is perfect \cite{Lo}. 
The theorem is now called the weak perfect graph theorem.
With it,  it is easy to show that a trivially perfect graph is perfect. 

\begin{prop}
Trivially perfect graphs are perfect. 
\end{prop}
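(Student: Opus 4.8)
The plan is to prove that a trivially perfect graph $G$ is perfect by reducing to the two characterizations already in play: trivial perfection is a hereditary property defined by $\alpha(G_W) = m(G_W)$, and perfection is the hereditary property $\omega(G_W) = \chi(G_W)$. Since trivial perfection passes to induced subgraphs by definition, and likewise perfection is defined by a universally quantified condition over induced subgraphs, it suffices to show the single equality $\omega(H) = \chi(H)$ for an arbitrary trivially perfect graph $H$; applying this to every induced subgraph $H = G_W$ of $G$ then yields perfection of $G$.

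First I would invoke the weak perfect graph theorem (Lov\'{a}sz, \cite{Lo}), so that proving $G$ perfect is equivalent to proving $\overline{G}$ perfect, i.e.\ to proving $\omega(\overline{G_W}) = \chi(\overline{G_W})$ for all $W$. Using the stated identities $\alpha(G) = \omega(\overline{G})$ and $\chi(G) = \theta(\overline{G})$, the target equality $\omega(\overline{H}) = \chi(\overline{H})$ translates to $\alpha(H) = \theta(H)$. Now I would chain together the three inequalities recorded in the text: $\alpha(H) \le m(H)$ always, $\theta(H) \le m(H)$ always, and—this is the only place trivial perfection is used—$\alpha(H) = m(H)$ for every induced subgraph $H$ of $G$. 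Combined with the trivial bound $\alpha(H) \le \theta(H)$ (a clique can contain at most one vertex of any stable set, so covering $V(H)$ by cliques needs at least $\alpha(H)$ of them), these force $\alpha(H) = \theta(H) = m(H)$. Hence $\overline{G_W}$ satisfies $\omega = \chi$ for all $W$, so $\overline{G}$ is perfect, and therefore $G$ is perfect.

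The main obstacle is bookkeeping rather than depth: one must be careful that the hereditary quantifier is handled correctly—trivial perfection of $G$ gives $\alpha(G_W) = m(G_W)$ for \emph{all} induced subgraphs $G_W$, and each induced subgraph of $\overline{G}$ has the form $\overline{G_W}$, so the equality $\alpha(G_W) = \theta(G_W)$ must be available for every $W$ to conclude perfection of $\overline{G}$. One should also confirm the elementary inequality $\alpha(H) \le \theta(H)$ explicitly, since it is not among the listed ``in general'' facts (only $\theta(H) \le m(H)$ is). A cleaner alternative that avoids the complement altogether would be to show directly that every trivially perfect graph is a comparability graph—e.g.\ by the known structural description building trivially perfect graphs from disjoint unions and additions of a universal vertex—and then cite that comparability graphs are perfect; but since the excerpt has already set up $\alpha, m, \theta, \chi, \omega$ and the weak perfect graph theorem, the inequality-chaining argument above is the most economical route and I would adopt it.
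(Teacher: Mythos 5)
Your argument is correct and is essentially the paper's own proof: both reduce via Lov\'{a}sz's weak perfect graph theorem to showing $\overline{G}$ is perfect, and then force $\alpha(G_W) = \theta(G_W)$ for every induced subgraph by squeezing the chain $m(G_W) = \alpha(G_W) \le \theta(G_W) \le m(G_W)$, using $\alpha(G_W)=\omega(\overline{G_W})$ and $\theta(G_W)=\chi(\overline{G_W})$. Your direct justification of $\alpha \le \theta$ (a clique meets a stable set in at most one vertex) is just the general inequality $\omega \le \chi$ read on the complement, which is what the paper cites as ``general theory.''
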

\begin{proof}
Assume that $G$ is trivially perfect.
By \cite{Lo},  it is enough to show that $\overline{G}$ is perfect.
For all induced subgraphs $\overline{G}_{W}$ of $\overline{G}$, we have 
\[
m(G_{W}) = \alpha(G_{W}) = \omega(\overline{G_{W}}) \le \chi(\overline{G_{W}}) = \theta(G_{W}) \le m(G_{W})
\]

\noindent by general theory (note that $\overline{G}_{W} = \overline{G_{W}}$). 
\end{proof}

Golumbic gave a characterization of trivially perfect graphs.

\begin{thm}[{[G,  Theorem 2]}]
The following assertions are equivalent:
\begin{enumerate}
	\item $G$ is trivially perfect.
	\item $G$ is {\em $C_{4},  P_{4}$-free},  that is,  $G$ contains neither $C_{4}$ nor $P_{4}$ as an induced subgraph.
\end{enumerate}
\end{thm}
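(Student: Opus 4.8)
The plan is to establish the two implications separately: $(1)\Rightarrow(2)$ by contraposition, and $(2)\Rightarrow(1)$ by induction on the number of vertices.

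For $(1)\Rightarrow(2)$, I would first record that neither $C_4$ nor $P_4$ satisfies $\alpha=m$. Indeed, $\alpha(C_4)=2$ while the four edges of $C_4$ are exactly its maximal cliques, so $m(C_4)=4$; and $\alpha(P_4)=2$ while the three edges of $P_4$ are exactly its maximal cliques, so $m(P_4)=3$. Hence, if $G$ contains an induced $C_4$ or an induced $P_4$, that subgraph $G_W$ already satisfies $\alpha(G_W)\neq m(G_W)$, so $G$ is not trivially perfect.

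For $(2)\Rightarrow(1)$, note that the class of $C_4,P_4$-free graphs is closed under taking induced subgraphs, so it suffices to prove that every $C_4,P_4$-free graph $G$ satisfies $\alpha(G)=m(G)$. I would argue by induction on $n=\#V(G)$, the case $n=1$ being clear. Suppose $n\ge 2$. If $G$ is disconnected, then both $\alpha$ and $m$ are additive over the connected components of $G$ (each maximal clique is contained in a single component and is maximal there), and every component is $C_4,P_4$-free with fewer than $n$ vertices, so the induction hypothesis applies. If $G$ is connected, the crucial step is to produce a universal vertex $v$ of $G$, i.e.\ a vertex adjacent to all others. Granting this, every maximal clique of $G$ contains $v$, so $C\mapsto C\setminus\{v\}$ is a bijection from the maximal cliques of $G$ to those of $G-v$, whence $m(G)=m(G-v)$; moreover $v$ belongs to no stable set of size $\ge 2$, so $\alpha(G)=\alpha(G-v)$; and $G-v$ is $C_4,P_4$-free on $n-1$ vertices, so the induction hypothesis finishes this case as well.

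The main obstacle is the existence of a universal vertex in a connected $C_4,P_4$-free graph $G$, and here is the approach I have in mind. Choose a vertex $v$ of maximum degree and suppose, for contradiction, that $v$ is not universal. Since $G$ is connected and $P_4$-free, every vertex outside $N[v]$ is at distance exactly $2$ from $v$ (a shortest path of length $\ge 3$ would have an induced $P_4$ among its first four vertices), so there exist $u\in N(v)$ and $w\notin N[v]$ with $u\sim w$. I would then show that $u$ is adjacent to every vertex of $N(v)$: if $z\in N(v)\setminus\{u\}$ with $z\not\sim u$, then $\{z,v,u,w\}$ induces a $P_4$ when $z\not\sim w$ and a $C_4$ when $z\sim w$, a contradiction either way. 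Therefore $N(v)\setminus\{u\}\subseteq N(u)$, while $v$ and $w$ lie in $N(u)$ but not in $N(v)\cup\{v\}$ and are distinct; hence $\deg(u)\ge\deg(v)+1$, contradicting the maximality of $\deg(v)$. This forces $v$ to be universal and completes the proof.
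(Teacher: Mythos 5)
Your proof is correct, but it takes a genuinely different route from the paper's. The direction $(1)\Rightarrow(2)$ is the same (computing $\alpha$ and $m$ for $C_4$ and $P_4$). For $(2)\Rightarrow(1)$, however, the paper argues directly by contradiction: assuming some induced subgraph $G_W$ has $\alpha(G_W)<m(G_W)$, it produces a maximal stable set $S_W$ with a vertex $s$ lying in two distinct maximal cliques, picks non-adjacent witnesses $x,y$ of those cliques, and then extracts an induced $C_4$ or $P_4$ from $\{x,y,s,u\}$ or enlarges the stable set, contradicting maximality. You instead induct on the number of vertices, splitting off connected components and, in the connected case, producing a universal vertex by a maximum-degree exchange argument; this universal-vertex lemma is essentially the content of Wolk's theorem on comparability graphs of tree posets, which the paper uses separately in Section 4 (Corollary 4.9). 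Your route is longer but fully self-contained, and it sidesteps the tersest step of the paper's proof (the existence of the maximal stable set $S_W$ with a vertex in two distinct maximal cliques, which the paper asserts with little justification); it also delivers the structural recursion (universal vertex plus components) underlying the quasi-threshold/tree-poset picture, whereas the paper's argument is shorter and purely local. One cosmetic slip in your degree count: you say $v$ and $w$ do not lie in $N(v)\cup\{v\}$, which is false for $v$ itself; what you need, and what is true, is that $v,w\notin N(v)\setminus\{u\}$ and $v\neq w$, giving $\deg(u)\ge(\deg(v)-1)+2=\deg(v)+1$, so the contradiction stands.
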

\begin{proof}
$(1) \Rightarrow (2)$:  
It is clear since $\alpha(C_{4}) = 2$,  $m(C_{4}) = 4$, and $\alpha(P_{4}) = 2$, $m(P_{4}) = 3$. 

$(2) \Rightarrow (1)$:  
Assume that $G$ contains neither $C_{4}$ nor $P_{4}$ as an induced subgraph.
If $G$ is not trivially perfect,  then there exists an induced subgraph $G_{W}$ of $G$
such that $\alpha(G_{W}) < m(G_{W})$. 
For this $G_{W}$,  there exists a maximal stable set $S_{W}$ of $G_{W}$ which satisfies the following: 

\vspace{2mm}

There exists $s \in S_{W}$ such that $s \in C_{1} \cap C_{2}$ for some distinct pair of cliques $C_{1}$,  $C_{2} \in C_{G_{W}}$. 

\vspace{2mm}

\noindent Note that $\#S_{W} > 1$ since $G_{W}$ is not complete.
Then there exist $x \in C_{1}$ and $y \in C_{2}$ such that $\{x,  s\}$,  $\{y,  s\} \in E(G_{W})$ and  $\{x,  y\} \not\in E(G_{W})$.

Let $u \in S_{W} \setminus \{s\}$.
If $\{x,  u\} \in E(G_{W})$ or $\{y,  u\} \in E(G_{W})$, then the induced graph $G_{\{x,  y,  s,  u\}}$ is $C_{4}$ or $P_{4}$, a contradiction.
Hence $\{x,  u\} \not\in E(G_{W})$ and $\{y,  u\} \not\in E(G_{W})$.
Then $\{x,  y\} \cup \{S \setminus \{s\}\}$ is a stable set of $G_{W}$, which contradicts that $S$ is maximal.
Therefore, $G$ is trivially perfect.
\end{proof}

Next,  we show that a trivially perfect graph is a kind of comparability graph. 
First,  we define the notion of a tree poset. 

\begin{defn}[see \cite{W}]
 A poset $P$ is a {\em tree} if it satisfies the following conditions:
\begin{enumerate}
	\item Each of the connected components of $P$ has a minimal element. 
	\item For all $p$,  $p^{'} \in P$,  the following assertion holds:  if there exists $q \in P$ such that $p,  p^{'} \le q$, then $p \le p^{'}$ or $p \ge p^{'}$.
\end{enumerate}
\end{defn}

\begin{ex}
The following poset is a tree: 

\vspace{8mm}

\begin{xy}
	\ar@{} (0,  0) ; (75,  0)  *\dir<4pt>{*} = "A", 
	\ar@{-} "A" ; (60,  10)  *\dir<4pt>{*} = "B", 
	\ar@{-} "A" ; (75,  10)  *\dir<4pt>{*} = "C", 
	\ar@{-} "A" ; (90,  10)  *\dir<4pt>{*} = "D", 
	\ar@{-} "C" ; (75,  20)  *\dir<4pt>{*} = "E", 
	\ar@{-} "C" ; (90,  20)  *\dir<4pt>{*} = "F", 
	\ar@{-} "D" ; (105,  20)  *\dir<4pt>{*} = "G", 	
\end{xy}

\vspace{5mm}

\end{ex}

Tree posets can be characterized as follows. 

\begin{prop}
Let $P$ be a poset. 
Then the following assertions are equivalent:
\begin{enumerate}
	\item $P$ is a tree.
	\item Neither the X-poset in Example 3.4,  the N-poset in Theorem 3.9, nor the diamond poset as described below 
	appears as a subposet of $P$. 
\end{enumerate}

\vspace{5mm}

\begin{xy}
	\ar@{} (0,  0) ; (75,  0)  *\dir<4pt>{*} = "A", 
	\ar@{-} "A" ; (72,  2)  *\dir<4pt>{} = "B", 
	\ar@{.} "B" ; (63,  8)  *\dir<4pt>{} = "C", 
	\ar@{-} "C" ; (60,  10)  *\dir<4pt>{*} = "D", 
	\ar@{-} "A" ; (78,  2)  *\dir<4pt>{} = "E", 
	\ar@{.} "E" ; (87,  8)  *\dir<4pt>{} = "F", 
	\ar@{-} "F" ; (90,  10)  *\dir<4pt>{*} = "G", 
	\ar@{-} "D" ; (63,  12)  *\dir<4pt>{} = "H", 
	\ar@{.} "H" ; (72,  18)  *\dir<4pt>{} = "I", 
	\ar@{-} "I" ; (75,  20)  *\dir<4pt>{*} = "J", 
	\ar@{-} "G" ; (87,  12)  *\dir<4pt>{} = "K", 
	\ar@{.} "K" ; (78,  18)  *\dir<4pt>{} = "L", 
	\ar@{-} "L" ; "J", 
\end{xy}

\vspace{3mm}

\end{prop}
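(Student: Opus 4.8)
The plan is to prove the two implications separately, each by contraposition. The direction ``$P$ is a tree $\Rightarrow$ $P$ avoids the three posets'' will be the easy one; the converse carries the combinatorial content.

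For $(1)\Rightarrow(2)$, I would first observe that being a tree passes to subposets: condition (1) of Definition 4.6 is automatic for a finite poset, and if $P$ satisfies condition (2) while $P'$ is a subposet of $P$, then two elements of $P'$ with a common upper bound in $P'$ already have that upper bound in $P$, hence are comparable in $P$ and therefore in $P'$. So it suffices to check that none of the X-poset, the N-poset, and the diamond poset is a tree, which is immediate from inspection: in each of them there are two incomparable elements lying below a common element, violating condition (2). Consequently a poset containing one of the three as a subposet cannot be a tree, which is exactly the contrapositive of $(1)\Rightarrow(2)$.

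For $(2)\Rightarrow(1)$, I would argue the contrapositive: if the finite poset $P$ is not a tree, it must contain one of the three forbidden posets. Since condition (1) of Definition 4.6 holds automatically, it is condition (2) that fails, so there are incomparable elements $p,p'$ with a common upper bound; passing to a minimal element of the set of common upper bounds gives a minimal common upper bound $q$, and then necessarily $p<q$ and $p'<q$. I would then run a case analysis on how the configuration $p,p'<q$ is embedded in $P$: (a) if $p$ and $p'$ admit a common lower bound, a maximal such element together with $p,p',q$ yields an induced copy of the diamond poset; (b) if instead a comparability of $P$ extends the configuration ``away'' from the V that $p,p'<q$ forms --- e.g.\ a second minimal upper bound of one of $p,p'$, or an element below one of them but incomparable to the other --- one extracts an induced copy of the N-poset; (c) if some element of $P$ carries two incomparable elements below it and two incomparable elements above it, one extracts an induced copy of the X-poset. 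In each branch, replacing elements by minimal or maximal ones along the relevant chains --- possible because $P$ is finite --- is what guarantees that the four or five chosen elements form an \emph{induced} subposet isomorphic to the intended one, with no spurious comparabilities.

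The step I expect to be the main obstacle is precisely this case analysis in $(2)\Rightarrow(1)$: it must be organized so that the trichotomy diamond / N / X is genuinely exhaustive, and in each branch one has to verify that the extracted subset realizes exactly the target poset. The remaining ingredients --- the heredity of the tree property and the fact that each of the three posets violates condition (2) of Definition 4.6 --- are routine.
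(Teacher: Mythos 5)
Your direction $(1)\Rightarrow(2)$ is fine, provided ``subposet'' is read as \emph{induced} subposet: the tree property passes to induced subposets exactly as you argue, and each of the X-, N- and diamond posets visibly violates condition (2) of Definition 4.6. (Under the weaker reading of ``subposet'', where comparabilities may be added, even this direction would fail -- a $4$-chain contains the N-poset as a weak subposet -- so the induced reading is the one you need.) For what it is worth, the paper states Proposition 4.8 without any proof, so there is no argument of the author's to compare yours against; the burden falls entirely on your sketch.

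The genuine gap is in $(2)\Rightarrow(1)$, and it is not merely that your case analysis is left unfinished: it cannot be completed, because that implication is false as stated. Take $P=\{p,p',q\}$ with $p<q$, $p'<q$ and $p,p'$ incomparable. Condition (2) of Definition 4.6 fails for the pair $p,p'$, so $P$ is not a tree; but $P$ has only three elements, while the X-, N- and diamond posets have five, four and four elements respectively, so none of them can appear as a subposet of $P$. This example falls outside your cases (a)--(c): there is no common lower bound, no additional comparability extending the V, and no element with two incomparable elements both above and below it -- which is exactly why the trichotomy you hoped to organize cannot be made exhaustive. The clean forbidden-subposet characterization is that $P$ is a tree if and only if it contains no induced copy of this three-element configuration (two incomparable elements with a common upper bound); each of X, N and the diamond contains that configuration, which is what gives $(1)\Rightarrow(2)$. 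Note that in the proof of Theorem 5.1 only $(1)\Rightarrow(2)$ (for the X- and N-posets) is used, so this defect does not touch the main theorem; but as a proof of the stated equivalence your sketch breaks precisely at the step you yourself flagged as the main obstacle.
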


In \cite{W},  Wolk discussed the properties of the comparability graphs of a tree poset  
and showed that such graphs are exactly the graphs that satisfy the ``diagonal condition". 
This condition is equivalent to being $C_{4}$, $P_{4}$-free, and hence we have

\begin{cor}
Let $G$ be a graph. 
Then the following assertions are equivalent:
\begin{enumerate}
	\item $G$ is trivially perfect. 
	\item $G$ is a comparability graph of a tree poset. 
	\item $G$ is $C_{4}$, $P_{4}$-free.
\end{enumerate}
\end{cor}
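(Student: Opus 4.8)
The strategy is to route both biconditionals through condition (3), since being $C_4, P_4$-free is the easiest of the three to manipulate. The equivalence of (1) and (3) costs nothing: it is exactly Golumbic's Theorem 4.6 quoted above. So all the work is in the equivalence of (2) and (3).

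I would establish (2) $\Rightarrow$ (3) directly from Definition 4.7, and in fact using only condition (2) of that definition. Suppose $G = G(P)$ with $P$ a tree poset and, for contradiction, that $G$ has an induced $C_4$ or $P_4$ on vertices $a, b, c, d$. Because $G = G(P)$, an edge among these four means its endpoints are comparable in $P$ and a non-edge among them means they are incomparable. A short case analysis on which comparable pairs point ``up'' and which point ``down'' --- in which transitivity at once kills any branch that would manufacture a forbidden edge of the $C_4$ or $P_4$ --- leaves, in every surviving branch, two of the four elements that are incomparable in $P$ yet have a common upper bound among the four. That contradicts condition (2) of Definition 4.7.

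For (3) $\Rightarrow$ (2) I would induct on $|V(G)|$. If $G$ is disconnected, every component is $C_4, P_4$-free, hence (by induction) the comparability graph of a tree poset, and the disjoint union of these posets is again a tree poset --- conditions (1) and (2) of Definition 4.7 are checked inside a single connected component --- with comparability graph $G$. If $G$ is connected, the crux is that $G$ then has a universal vertex: pick $v$ of maximum degree; if $v$ missed some vertex, connectedness (together with $P_4$-freeness, which forces a shortest witnessing path to have length $2$) would produce a path $v - w - z$ with $vz \notin E(G)$, and then examining $\{u, v, w, z\}$ for each neighbour $u$ of $v$ forces an induced $P_4$, or an induced $C_4$, or a vertex of degree exceeding $\deg v$ --- each impossible. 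Given a universal vertex $v$, apply the inductive hypothesis to $G - v$ to get a tree poset $P'$, and let $P$ be $P'$ with $v$ adjoined as a new global minimum below everything; a routine verification shows $P$ is a tree poset with $G(P) = G$. I expect the universal-vertex step to be the one real obstacle, as it is the only place where both exclusions (of $C_4$ and of $P_4$) are genuinely needed. Alternatively, the entire equivalence (2) $\Leftrightarrow$ (3) can be quoted from Wolk \cite{W}, who proved that the comparability graphs of tree posets are exactly the graphs satisfying his diagonal condition, which is equivalent to being $C_4, P_4$-free, as recalled just before the statement.
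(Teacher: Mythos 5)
Your proposal is correct, but it is more self-contained than what the paper actually does. The paper obtains the equivalence $(1)\Leftrightarrow(3)$ from Golumbic's characterization (Theorem 4.6, which it proves), exactly as you do, but for $(2)\Leftrightarrow(3)$ it simply quotes Wolk \cite{W}: comparability graphs of tree posets are the graphs satisfying the ``diagonal condition,'' which is equivalent to being $C_{4}$, $P_{4}$-free --- i.e.\ the paper's route is the one you relegate to your final sentence. Your main argument replaces that citation by a direct proof: for $(2)\Rightarrow(3)$ the case analysis does close up as you claim (orienting one edge of an induced $C_4$ or $P_4$, transitivity eliminates the branches that would create a forbidden comparability, and every surviving branch exhibits an incomparable pair lying below a common element of the quadruple, violating condition (2) of Definition 4.7); for $(3)\Rightarrow(2)$ your universal-vertex step is the standard and correct argument (a maximum-degree non-universal vertex $v$ in a connected graph yields $v\text{--}w\text{--}z$ with $vz\notin E$, and each neighbour $u$ of $v$ must then be adjacent to $w$ on pain of an induced $P_4$ or $C_4$, forcing $\deg w>\deg v$), and adjoining $v$ as a new global minimum to the tree poset for $G-v$ visibly preserves both tree conditions and realizes $G$. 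This universal-vertex lemma is in essence the heart of Wolk's theorem, so the mathematical content is the same; what your route buys is independence from \cite{W} and an explicit picture (universal vertex $\leftrightarrow$ adjoining a root below a tree poset) of why trivially perfect graphs are exactly comparability graphs of rooted forests, while the paper's route is the shorter citation appropriate to its expository purpose.
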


\begin{rem}
A graph $G$ is a {\em threshold graph} if it can be constructed from a one-vertex graph by repeated applications of the following two operations: 
	\begin{enumerate}
		\item Add a single isolated vertex to the graph. 
		\item Take a suspension of the graph. 
	\end{enumerate}
\end{rem}

The concept of a threshold graph was introduced by Chv\'{a}tal and Hammer \cite{CHam}. 
They proved that $G$ is a threshold graph if and only if $G$ is $C_{4}$, $P_{4}$, $2K_{2}$-free. 
Hence a trivially perfect graph is also called a {\em quasi-threshold graph}.


\newpage
\section{Proof of Main theorem}

In this section,  we prove the main theorem. 

\begin{thm}
Let $G$ be a graph. 
Then the following assertions are equivalent: 
\begin{enumerate}
	\item $k[\mathcal{Q}_{G}]$ is strongly Koszul. 
	\item $G$ is trivially perfect.
\end{enumerate}
\end{thm}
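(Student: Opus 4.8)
The plan is to establish the two implications separately, treating the results gathered in Sections~2--4 as black boxes: the dictionary between stable set polytopes of comparability graphs and Hibi rings (Theorems~3.7 and~3.9), Golumbic's forbidden-induced-subgraph criterion (Theorem~4.5), and Wolk's identification of trivially perfect graphs with comparability graphs of tree posets (Corollary~4.9).

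For $(1)\Rightarrow(2)$ I would argue by contraposition, and the argument is immediate from what has already been assembled. Suppose $G$ is not trivially perfect. By Theorem~4.5 it then contains $C_4$ or $P_4$ as an induced subgraph, so by Corollary~3.10 --- which rests on the isomorphisms $k[\mca{Q}_{C_4}]\cong\mca{R}_k[Q_1]$ and $k[\mca{Q}_{P_4}]\cong\mca{R}_k[Q_2]$ of Example~3.8, on Theorem~3.9, and on the hereditary property Corollary~2.4 --- the ring $k[\mca{Q}_G]$ is not strongly Koszul. This finishes the direction.

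For $(2)\Rightarrow(1)$, together with the ``in particular'' clause: assume $G$ is trivially perfect. By Corollary~4.9 we may write $G = G(P)$ for some \emph{tree} poset $P$. Because $P$ is a tree, Proposition~4.8 guarantees that neither the X-poset of Example~3.4 nor the N-poset of Theorem~3.9 occurs as a subposet of $P$. The absence of the X-poset lets me invoke the Hibi--Li theorem (Theorem~3.7) to get a graded $k$-algebra isomorphism
\[
k[\mca{Q}_G] \;=\; k[\mca{Q}_{G(P)}] \;\cong\; \mca{R}_k[P],
\]
while the absence of the N-poset lets me invoke Theorem~3.9 to conclude that $\mca{R}_k[P]$ is strongly Koszul and, moreover, trivial. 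Transporting both properties across the isomorphism shows $k[\mca{Q}_G]$ is strongly Koszul and trivial, the latter being exactly the final assertion of the theorem.

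Essentially all of the genuine content here is imported from the cited work --- the quadratic Gr\"obner analysis of Hibi rings behind Theorem~3.9 and the unimodular-equivalence criterion behind Theorem~3.7 --- so there is no substantial computation to carry out. Within the present paper the one delicate point, and the step I would be most careful about, lies in $(2)\Rightarrow(1)$: one must represent $G$ by a poset that is a \emph{tree}, not by an arbitrary poset $P$ with $G=G(P)$, since such representing posets are far from unique (cf.\ Remark~3.5) and a careless choice could contain an X- or N-poset, in which case neither Theorem~3.7 nor Theorem~3.9 would be applicable. Corollary~4.9 is precisely what clears this obstacle.
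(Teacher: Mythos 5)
Your proposal is correct and follows essentially the same route as the paper: the paper also deduces $(2)\Rightarrow(1)$ by writing $G=G(P)$ for a tree poset $P$ via Corollary~4.9, excluding the X- and N-posets via Proposition~4.8, and combining Theorems~3.7 and~3.9, and it handles $(1)\Rightarrow(2)$ by contraposition through Corollary~4.9 and Corollary~3.10. Your remark on why one must choose a \emph{tree} representative of $G$ is exactly the point the paper's argument relies on.
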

\begin{proof}
We assume that $G$ is trivially perfect. 
Then there exists a tree poset $P$ such that $G = G(P)$ 
from Corollary 4.9. 
This implies that neither the X-poset in Example 3.4 nor the N-poset in Theorem 3.9 appears as a subposet of $P$ by Proposition 4.8 ,  and hence 
$k[\mca{Q}_{G(P)}] \cong \mca{R}_{k}[P]$ is strongly Koszul by Theorems 3.7 and 3.9.  

Conversely,   if $G$ is not trivially perfect,  $G$ contains $C_{4}$ or $P_{4}$ as an induced subgraph by Corollary 4.9. 
Therefore,  we have that $k[\mca{Q}_{G}]$ is not strongly Koszul by Corollary 3.10. 
\end{proof}

%

\section{Remark on usual Koszulness of $k[\mca{Q}_{G}]$}

It seems to be difficult to give a complete characterization of when $k[\mca{Q}_{G}]$ is Koszul. 
However,  it is known that $k[\mca{Q}_{G}]$ is Koszul for many graphs $G$. 

\begin{thm}[\cite{EN}]
If $G$ is an almost bipartite graph,  then $k[\mca{Q}_{G}]$ is Koszul,  where a graph $G$ is almost bipartite
if there exists a vertex $v \in [n]$ such that the induced subgraph $G_{[n] \setminus v}$ is bipartite,  that is,  does not contain 
induced odd cycles. 
\end{thm}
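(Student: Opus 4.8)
The plan is to establish Koszulness by producing a quadratic Gr\"obner basis for the toric ideal defining $k[\mca{Q}_{G}]$: by the implications recalled in the Introduction, a quadratic Gr\"obner basis forces Koszulness, so it suffices to work with the defining ideal $I_{G} \subset k[x_{S} \mid S \text{ a stable set of } G]$ of $k[\mca{Q}_{G}]$, whose binomial generators record the balanced relations $\rho(S_{1})+\cdots+\rho(S_{d}) = \rho(T_{1})+\cdots+\rho(T_{d})$ among stable sets.

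First I would dispose of the genuinely bipartite case. If $G$ itself is bipartite with parts $A$ and $B$, then $G = G(P)$ for the poset $P$ on $V(G)$ obtained by declaring $a < b$ whenever $\{a,b\}\in E(G)$ with $a\in A$, $b\in B$ (this $P$ has no chain of three elements, hence is of height at most $2$). The X-poset of Example 3.4 contains a three-element chain, so it is not a subposet of $P$; by Theorem 3.6 we get $k[\mca{Q}_{G}] = k[\mca{Q}_{G(P)}] \cong \mca{R}_{k}[P]$, a Hibi ring, which is Koszul (indeed has a quadratic Gr\"obner basis, as recalled in Section 3). Note that almost bipartite graphs genuinely leave the comparability-graph world -- e.g. $C_{5}$ is almost bipartite but not a comparability graph -- so the general case needs a new argument.

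For the general case fix a vertex $v$ with $H := G_{[n]\setminus v}$ bipartite. Every stable set of $G$ either avoids $v$ (and is then a stable set of $H$) or has the form $\{v\}\cup S$ with $S$ a stable set of $H$ disjoint from $N_{G}(v)$. I would split the generators accordingly, write $y_{S} := x_{\{v\}\cup S}$ for the second type, and choose a term order refining ``more $y$-factors is larger''. The key invariant is that the number of $y$-factors in a monomial $x_{S_{1}}\cdots x_{S_{d}}$ equals the $v$-coordinate of $\rho(S_{1})+\cdots+\rho(S_{d})$, hence is constant on each fiber; so a primitive binomial in $I_{G}$ has the same number $m$ of $y$-factors on each side. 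For $m=0$ the binomial lies in $I_{H}$, which has a quadratic Gr\"obner basis by the bipartite case. For $m\ge 1$, deleting $v$ from the $y$-factors turns the relation into a balanced relation among stable sets of $H$ carrying the extra datum that those sets avoid $N_{G}(v)$; one must then show it is generated by the quadrics of $I_{H}$ together with the mixed quadrics $y_{S}x_{T}-y_{S'}x_{T'}$ and $y_{S}y_{T}-y_{S'}y_{T'}$, and that the associated S-polynomials reduce to zero.

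The main obstacle I expect is precisely this last reduction. The point to exploit is that, since $G-v$ is bipartite, every induced configuration of $G$ that is not ``lattice-like'' must pass through $v$, and each stable set contains $v$ at most once, so at most one $y$-factor per monomial is affected: I would sort the $y$-factors against the bipartition of $H$, peel them off one at a time, and thereby reduce an arbitrary S-polynomial to one already handled by the Hibi-ring Gr\"obner basis of $I_{H}$. An alternative, possibly cleaner, route is to construct directly a regular unimodular triangulation of $\mca{Q}_{G}$ all of whose minimal non-faces are edges -- a flag triangulation -- starting from the fact that $\mca{Q}_{H}$ is a chain polytope (hence compressed) and checking that the pyramid-type subdivision along the apex direction $\mathbf{e}_{v}$ remains unimodular and flag; a flag regular unimodular triangulation yields a squarefree quadratic initial ideal, hence Koszulness, and would bypass the explicit S-polynomial bookkeeping.
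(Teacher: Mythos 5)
This statement is quoted by the paper from Engstr\"om--Nor\'en \cite{EN} without proof, so there is no internal argument to measure yours against; judged on its own terms, your proposal is a plan rather than a proof. The bipartite warm-up is fine (a bipartite $G$ is $G(P)$ for a poset of height at most two, such a $P$ contains no X-poset, and Theorem 3.7 identifies $k[\mathcal{Q}_G]$ with the Hibi ring $\mathcal{R}_k[P]$, which has a quadratic Gr\"obner basis --- this is also Remark 6.4(2) of the paper). But the general case is where the theorem lives, and there your argument stops exactly where a proof would have to begin. The grading observation (the $v$-coordinate counts the $y$-factors, so primitive binomials have the same number $m$ of $y$-factors on each side) is correct but cheap. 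The substance is the claim that $I_G$ is generated by the mixed quadrics $y_Sx_T-y_{S'}x_{T'}$, $y_Sy_T-y_{S'}y_{T'}$ together with the quadrics of $I_H$, \emph{and} that these form a Gr\"obner basis for your order; you prove neither, writing instead that ``one must then show'' the first and that the S-polynomials ``reduce to zero,'' and you yourself flag this reduction as ``the main obstacle.'' Even quadratic generation of $I_G$ is not automatic here --- it is part of what \cite{EN} establishes --- and for $m\ge 1$ the ``peel off one $y$-factor at a time'' step requires an exchange argument: given monomials involving $\{v\}\cup S$ and $\{v\}\cup S'$ in the same fiber, one must produce intermediate stable sets that still avoid $N_G(v)$, and it is precisely the odd cycles through $v$ that obstruct naive exchanges. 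Nothing in the sketch addresses this.

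The alternative route has the same status plus a geometric inaccuracy: $\mathcal{Q}_G$ is not a pyramid over $\mathcal{Q}_{G-v}$ in the direction $\mathbf{e}_v$. Its vertices with $v$-coordinate $1$ are $\mathbf{e}_v+\rho(S)$ for $S$ a stable set of $G-N[v]$, so $\mathcal{Q}_G$ is the convex hull of $\mathcal{Q}_{G-v}\times\{0\}$ and $\mathcal{Q}_{G-N[v]}\times\{1\}$, a Cayley-type configuration rather than a pyramid; ``checking that the pyramid-type subdivision remains unimodular and flag'' is therefore not a routine verification but, again, the whole content of the theorem. In short: correct setup, correct reductions to a key lemma, but the key lemma is asserted, not proved.
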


\begin{rem} 
An almost bipartite graph is one such that all its odd cycles share a common vertex. 
Hence if $G$ is almost bipartite, then $G$ is $K_{4}$-free,  that is,  $\omega(G) \le 3$. 
In the case of $n \le 5$,  $G$ is almost bipartite if and only if $G$ is $K_{4}$-free. 
\end{rem}

Next,  we recall the theorem of Hibi and Li (Theorem 3.7).  

A graph $G$ is an {\em HL-comparability} graph if it is the comparability graph of a poset $P$ which 
does not contain the X-poset in Example 3.4 as a subposet. 

From their theorem,  we have 

\begin{thm}
If $G$ is an HL-comparability graph,  then $k[\mca{Q}_{G}]$ is Koszul. 
\end{thm}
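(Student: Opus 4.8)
The statement to prove is Theorem 6.4: if $G$ is an HL-comparability graph, then $k[\mca{Q}_G]$ is Koszul. The key is to recognize that being an HL-comparability graph is precisely the hypothesis needed to invoke Theorem 3.7 (Hibi--Li). First I would spell out the hypothesis: by definition $G = G(P)$ for some poset $P$ which does not contain the X-poset of Example 3.4 as a subposet. Then, since the X-poset does not appear as a subposet of $P$, condition (1) of Theorem 3.7 is satisfied, so condition (2) gives the isomorphism of $k$-algebras $k[\mca{Q}_G] = k[\mca{Q}_{G(P)}] \cong \mca{R}_k[P]$. It remains only to observe that a Hibi ring is always Koszul.

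\emph{Why the Hibi ring is Koszul.} This is recorded in Section 3 of the excerpt: Hibi showed that $\mca{R}_k[P] \cong k[X_I \mid I \in J(P)]/I_P$, where $I_P$ is generated by the binomials $X_I X_J - X_{I\cap J} X_{I\cup J}$, and moreover that $I_P$ has a quadratic Gr\"obner basis with respect to any term order in which the initial term of each such binomial is $X_I X_J$. By the general hierarchy stated in the introduction (a toric ring whose defining ideal has a quadratic Gr\"obner basis is Koszul), $\mca{R}_k[P]$ is Koszul. Koszulness is an isomorphism invariant of graded $k$-algebras, so $k[\mca{Q}_G]$ is Koszul as well. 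That completes the argument.

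\emph{Main obstacle.} Honestly there is no real obstacle here: the result is an immediate corollary of Theorem 3.7 together with the fact (also already in the excerpt) that Hibi rings are Koszul. The only point requiring a little care is the \emph{direction} in which Theorem 3.7 is used. Theorem 3.7 is an equivalence, and here one uses the implication (1) $\Rightarrow$ (2), i.e. that the absence of the X-poset yields the isomorphism $\mca{R}_k[P] \cong k[\mca{Q}_{G(P)}]$ — and one should note that the particular poset $P$ realizing $G$ as a comparability graph may not be unique (Remark 3.6), but the definition of HL-comparability only asks that \emph{some} such $P$ be X-poset-free, which is exactly what Theorem 3.7(1) needs. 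With that observation the proof is a two-line invocation of earlier results.

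\begin{proof}
Since $G$ is an HL-comparability graph, by definition there exists a poset $P$ with $G = G(P)$ such that the X-poset of Example 3.4 does not appear as a subposet of $P$. Thus condition (1) of Theorem 3.7 holds, and condition (2) of that theorem gives an isomorphism of graded $k$-algebras
\[
k[\mca{Q}_G] = k[\mca{Q}_{G(P)}] \cong \mca{R}_k[P].
\]
As recalled in Section 3, the Hibi ring $\mca{R}_k[P] \cong k[X_I \mid I \in J(P)]/I_P$ has a quadratic Gr\"obner basis for a suitable term order, and hence is Koszul by the general hierarchy stated in the introduction. Since Koszulness is preserved under isomorphism of graded $k$-algebras, $k[\mca{Q}_G]$ is Koszul.
\end{proof}
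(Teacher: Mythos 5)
Your proof is correct and follows exactly the paper's route: it applies the implication (1) $\Rightarrow$ (2) of Theorem 3.7 to obtain $k[\mca{Q}_{G}] \cong \mca{R}_{k}[P]$ and then invokes the Koszulness of Hibi rings via the quadratic Gr\"obner basis of $I_{P}$, which is precisely how the paper deduces this theorem from Hibi--Li. Your added remark about the non-uniqueness of the poset $P$ realizing $G$ is a reasonable clarification but does not change the argument.
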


\begin{rem}
	\begin{enumerate}
		\item If $n \le 5$,  the notion of HL-comparability is equivalent to the usual comparability. 
		\item Bipartite graphs are comparability graphs defined by posets with $\rank P \le 1$. 
		Hence bipartite graphs are HL-comparability graphs.  
		\item Let $G$ be a complete $r$-partite graph with $V(G) = \prod_{i = 1}^{r} V_{i}$. 
		Then $G$ is an HL-comparability graph if and only if $\#\{V_{i} \mid \#V_{i} = 1\} \ge r - 2$. 
		\item Let $G$ be a closed graph (see \cite{HeHiHrKR}) which satisfies the following condition: 
		for all $C_{1},  C_{2} \in C_{G}$,  $\#\{C_{1} \cap C_{2}\} \le 1$. 
		Then $G$ is an HL-comparability graph. 
	\end{enumerate}
\end{rem}

As the end of this note,  we give a classification table of connected six-vertex graphs using Harary \cite{Har}.

\newpage


\begin{xy}
	\ar@{} (0,  0);(100,  0) *\txt{Classification - six-vertex (112 items)};
	\ar@{-} (0,  0);(65,  0);
	\ar@{-} (134,  0);(150,  0) = "A";
	\ar@{-} (0,  0);(0,  -220) = "H";
	\ar@{-} "A";(150,  -220) = "I";
	\ar@{-} "H";"I";
	
	\textcolor{green}{\ar@{-} (8,  -6);(20,  -6);}
	\textcolor{green}{\ar@{} (0,  0);(36,  -6) *\txt{Almost Bipartite};}
	\textcolor{green}{\ar@{-} (52,  -6);(85,  -6);}
	\textcolor{green}{\ar@{-} (4,  -6);(4,  -182);}
	\textcolor{green}{\ar@{-} (82,  -6);(82,  -182);}
	\textcolor{green}{\ar@{-} (1,  -182);(81,  -182);}

	\ar@{} (0,  0);(8,  -12) *\dir<2pt>{*} = "B13";
	\ar@{-} "B13";(6,  -16) *\dir<2pt>{*} = "C13";
	\ar@{} "C13";(8,  -20) *\dir<2pt>{*} = "D13";
	\ar@{-} "D13";(12,  -20) *\dir<2pt>{*} = "E13";
	\ar@{} "E13";(14,  -16) *\dir<2pt>{*} = "F13";
	\ar@{-} "F13";(12,  -12) *\dir<2pt>{*} = "G13";
	\ar@{-} "B13";"G13";
	\ar@{-} "B13";"D13";
	\ar@{} "B13";"E13";
	\ar@{} "B13";"F13";
	\ar@{} "C13";"E13";
	\ar@{} "C13";"F13";
	\ar@{} "C13";"G13";
	\ar@{} "D13";"F13";
	\ar@{-} "D13";"G13";
	\ar@{} "E13";"G13";

	\ar@{} (0,  0);(20,  -12) *\dir<2pt>{*} = "B18";
	\ar@{-} "B18";(18,  -16) *\dir<2pt>{*} = "C18";
	\ar@{-} "C18";(20,  -20) *\dir<2pt>{*} = "D18";
	\ar@{-} "D18";(24,  -20) *\dir<2pt>{*} = "E18";
	\ar@{-} "E18";(26,  -16) *\dir<2pt>{*} = "F18";
	\ar@{-} "F18";(24,  -12) *\dir<2pt>{*} = "G18";
	\ar@{} "B18";"G18";
	\ar@{} "B18";"D18";
	\ar@{} "B18";"E18";
	\ar@{-} "B18";"F18";
	\ar@{} "C18";"E18";
	\ar@{} "C18";"F18";
	\ar@{} "C18";"G18";
	\ar@{} "D18";"F18";
	\ar@{} "D18";"G18";
	\ar@{} "E18";"G18";

	\ar@{} (0,  0);(32,  -12) *\dir<2pt>{*} = "B35";
	\ar@{-} "B35";(30,  -16) *\dir<2pt>{*} = "C35";
	\ar@{-} "C35";(32,  -20) *\dir<2pt>{*} = "D35";
	\ar@{-} "D35";(36,  -20) *\dir<2pt>{*} = "E35";
	\ar@{} "E35";(38,  -16) *\dir<2pt>{*} = "F35";
	\ar@{-} "F35";(36,  -12) *\dir<2pt>{*} = "G35";
	\ar@{-} "B35";"G35";
	\ar@{} "B35";"D35";
	\ar@{-} "B35";"E35";
	\ar@{} "B35";"F35";
	\ar@{} "C35";"E35";
	\ar@{} "C35";"F35";
	\ar@{} "C35";"G35";
	\ar@{} "D35";"F35";
	\ar@{} "D35";"G35";
	\ar@{-} "E35";"G35";

	\ar@{} (0,  0);(44,  -12) *\dir<2pt>{*} = "B38";
	\ar@{-} "B38";(42,  -16) *\dir<2pt>{*} = "C38";
	\ar@{-} "C38";(44,  -20) *\dir<2pt>{*} = "D38";
	\ar@{-} "D38";(48,  -20) *\dir<2pt>{*} = "E38";
	\ar@{-} "E38";(50,  -16) *\dir<2pt>{*} = "F38";
	\ar@{-} "F38";(48,  -12) *\dir<2pt>{*} = "G38";
	\ar@{-} "B38";"G38";
	\ar@{} "B38";"D38";
	\ar@{} "B38";"E38";
	\ar@{} "B38";"F38";
	\ar@{} "C38";"E38";
	\ar@{} "C38";"F38";
	\ar@{} "C38";"G38";
	\ar@{} "D38";"F38";
	\ar@{} "D38";"G38";
	\ar@{-} "E38";"G38";

	\ar@{} (0,  0);(56,  -12) *\dir<2pt>{*} = "B39";
	\ar@{-} "B39";(54,  -16) *\dir<2pt>{*} = "C39";
	\ar@{-} "C39";(56,  -20) *\dir<2pt>{*} = "D39";
	\ar@{-} "D39";(60,  -20) *\dir<2pt>{*} = "E39";
	\ar@{-} "E39";(62,  -16) *\dir<2pt>{*} = "F39";
	\ar@{-} "F39";(60,  -12) *\dir<2pt>{*} = "G39";
	\ar@{} "B39";"G39";
	\ar@{} "B39";"D39";
	\ar@{} "B39";"E39";
	\ar@{-} "B39";"F39";
	\ar@{} "C39";"E39";
	\ar@{} "C39";"F39";
	\ar@{-} "C39";"G39";
	\ar@{} "D39";"F39";
	\ar@{} "D39";"G39";
	\ar@{} "E39";"G39";

	\ar@{} (0,  0);(68,  -12) *\dir<2pt>{*} = "B58";
	\ar@{-} "B58";(66,  -16) *\dir<2pt>{*} = "C58";
	\ar@{-} "C58";(68,  -20) *\dir<2pt>{*} = "D58";
	\ar@{-} "D58";(72,  -20) *\dir<2pt>{*} = "E58";
	\ar@{-} "E58";(74,  -16) *\dir<2pt>{*} = "F58";
	\ar@{-} "F58";(72,  -12) *\dir<2pt>{*} = "G58";
	\ar@{-} "B58";"G58";
	\ar@{} "B58";"D58";
	\ar@{} "B58";"E58";
	\ar@{-} "B58";"F58";
	\ar@{} "C58";"E58";
	\ar@{} "C58";"F58";
	\ar@{} "C58";"G58";
	\ar@{} "D58";"F58";
	\ar@{-} "D58";"G58";
	\ar@{} "E58";"G58";

	\textcolor{blue}{\ar@{-} (-6,  -26);(86,  -26);}
	\textcolor{blue}{\ar@{} (0,  0);(100,  -26) *\txt{Comparability};}
	\textcolor{blue}{\ar@{-} (114,  -26);(136,  -26);}
	\textcolor{blue}{\ar@{-} (-10,  -26);(-10,  -202);}
	\textcolor{blue}{\ar@{-} (133,  -26);(133,  -202);}
	\textcolor{blue}{\ar@{-} (-13,  -202);(132,  -202);}

	\ar@{} (0,  0);(0,  -46) *\dir<2pt>{*} = "B33";
	\ar@{} "B33";(-2,  -50) *\dir<2pt>{*} = "C33";
	\ar@{-} "C33";(0,  -54) *\dir<2pt>{*} = "D33";
	\ar@{-} "D33";(4,  -54) *\dir<2pt>{*} = "E33";
	\ar@{-} "E33";(6,  -50) *\dir<2pt>{*} = "F33";
	\ar@{} "F33";(4,  -46) *\dir<2pt>{*} = "G33";
	\ar@{-} "B33";"G33";
	\ar@{-} "B33";"D33";
	\ar@{} "B33";"E33";
	\ar@{-} "B33";"F33";
	\ar@{} "C33";"E33";
	\ar@{} "C33";"F33";
	\ar@{} "C33";"G33";
	\ar@{} "D33";"F33";
	\ar@{} "D33";"G33";
	\ar@{-} "E33";"G33";

	\ar@{} (0,  0);(12,  -46) *\dir<2pt>{*} = "B36";
	\ar@{-} "B36";(10,  -50) *\dir<2pt>{*} = "C36";
	\ar@{} "C36";(12 ,  -54) *\dir<2pt>{*} = "D36";
	\ar@{-} "D36";(16,  -54) *\dir<2pt>{*} = "E36";
	\ar@{-} "E36";(18,  -50) *\dir<2pt>{*} = "F36";
	\ar@{} "F36";(16,  -46) *\dir<2pt>{*} = "G36";
	\ar@{-} "B36";"G36";
	\ar@{-} "B36";"D36";
	\ar@{} "B36";"E36";
	\ar@{-} "B36";"F36";
	\ar@{} "C36";"E36";
	\ar@{} "C36";"F36";
	\ar@{} "C36";"G36";
	\ar@{} "D36";"F36";
	\ar@{} "D36";"G36";
	\ar@{-} "E36";"G36";

	\ar@{} (0,  0);(24,  -46) *\dir<2pt>{*} = "B49";
	\ar@{-} "B49";(22,  -50) *\dir<2pt>{*} = "C49";
	\ar@{-} "C49";(24,  -54) *\dir<2pt>{*} = "D49";
	\ar@{-} "D49";(28,  -54) *\dir<2pt>{*} = "E49";
	\ar@{-} "E49";(30,  -50) *\dir<2pt>{*} = "F49";
	\ar@{} "F49";(28,  -46) *\dir<2pt>{*} = "G49";
	\ar@{} "B49";"G49";
	\ar@{} "B49";"D49";
	\ar@{-} "B49";"E49";
	\ar@{} "B49";"F49";
	\ar@{} "C49";"E49";
	\ar@{-} "C49";"F49";
	\ar@{-} "C49";"G49";
	\ar@{} "D49";"F49";
	\ar@{} "D49";"G49";
	\ar@{-} "E49";"G49";

	\ar@{} (0,  0);(36,  -46) *\dir<2pt>{*} = "B57";
	\ar@{-} "B57";(34,  -50) *\dir<2pt>{*} = "C57";
	\ar@{-} "C57";(36,  -54) *\dir<2pt>{*} = "D57";
	\ar@{-} "D57";(40,  -54) *\dir<2pt>{*} = "E57";
	\ar@{-} "E57";(42,  -50) *\dir<2pt>{*} = "F57";
	\ar@{-} "F57";(40,  -46) *\dir<2pt>{*} = "G57";
	\ar@{-} "B57";"G57";
	\ar@{} "B57";"D57";
	\ar@{-} "B57";"E57";
	\ar@{} "B57";"F57";
	\ar@{} "C57";"E57";
	\ar@{} "C57";"F57";
	\ar@{} "C57";"G57";
	\ar@{} "D57";"F57";
	\ar@{-} "D57";"G57";
	\ar@{} "E57";"G57";

	\ar@{} (0,  0);(48,  -46) *\dir<2pt>{*} = "B69";
	\ar@{-} "B69";(46,  -50) *\dir<2pt>{*} = "C69";
	\ar@{-} "C69";(48,  -54) *\dir<2pt>{*} = "D69";
	\ar@{-} "D69";(52,  -54) *\dir<2pt>{*} = "E69";
	\ar@{-} "E69";(54,  -50) *\dir<2pt>{*} = "F69";
	\ar@{-} "F69";(52,  -46) *\dir<2pt>{*} = "G69";
	\ar@{-} "B69";"G69";
	\ar@{} "B69";"D69";
	\ar@{-} "B69";"E69";
	\ar@{} "B69";"F69";
	\ar@{} "C69";"E69";
	\ar@{-} "C69";"F69";
	\ar@{} "C69";"G69";
	\ar@{} "D69";"F69";
	\ar@{-} "D69";"G69";
	\ar@{} "E69";"G69";

	\ar@{} (0,  0);(0,  -58) *\dir<2pt>{*} = "B14";
	\ar@{-} "B14";(-2,  -62) *\dir<2pt>{*} = "C14";
	\ar@{-} "C14";(0,  -66) *\dir<2pt>{*} = "D14";
	\ar@{-} "D14";(4,  -66) *\dir<2pt>{*} = "E14";
	\ar@{-} "E14";(6,  -62) *\dir<2pt>{*} = "F14";
	\ar@{} "F14";(4,  -58) *\dir<2pt>{*} = "G14";
	\ar@{-} "B14";"G14";
	\ar@{} "B14";"D14";
	\ar@{-} "B14";"E14";
	\ar@{} "B14";"F14";
	\ar@{} "C14";"E14";
	\ar@{} "C14";"F14";
	\ar@{} "C14";"G14";
	\ar@{} "D14";"F14";
	\ar@{} "D14";"G14";
	\ar@{} "E14";"G14";
	
	\ar@{} (0,  0);(12,  -58) *\dir<2pt>{*} = "B15";
	\ar@{-} "B15";(10,  -62) *\dir<2pt>{*} = "C15";
	\ar@{-} "C15";(12,  -66) *\dir<2pt>{*} = "D15";
	\ar@{-} "D15";(16,  -66) *\dir<2pt>{*} = "E15";
	\ar@{-} "E15";(18,  -62) *\dir<2pt>{*} = "F15";
	\ar@{-} "F15";(16,  -58) *\dir<2pt>{*} = "G15";
	\ar@{} "B15";"G15";
	\ar@{} "B15";"D15";
	\ar@{-} "B15";"E15";
	\ar@{} "B15";"F15";
	\ar@{} "C15";"E15";
	\ar@{} "C15";"F15";
	\ar@{} "C15";"G15";
	\ar@{} "D15";"F15";
	\ar@{} "D15";"G15";
	\ar@{} "E15";"G15";

	\ar@{} (0,  0);(24,  -58) *\dir<2pt>{*} = "B16";
	\ar@{-} "B16";(22,  -62) *\dir<2pt>{*} = "C16";
	\ar@{-} "C16";(24,  -66) *\dir<2pt>{*} = "D16";
	\ar@{-} "D16";(28,  -66) *\dir<2pt>{*} = "E16";
	\ar@{} "E16";(30,  -62) *\dir<2pt>{*} = "F16";
	\ar@{} "F16";(28,  -58) *\dir<2pt>{*} = "G16";
	\ar@{-} "B16";"G16";
	\ar@{} "B16";"D16";
	\ar@{-} "B16";"E16";
	\ar@{-} "B16";"F16";
	\ar@{} "C16";"E16";
	\ar@{} "C16";"F16";
	\ar@{} "C16";"G16";
	\ar@{} "D16";"F16";
	\ar@{} "D16";"G16";
	\ar@{} "E16";"G16";
	
	\ar@{} (0,  0);(36,  -58) *\dir<2pt>{*} = "B17";
	\ar@{} "B17";(34,  -62) *\dir<2pt>{*} = "C17";
	\ar@{-} "C17";(36,  -66) *\dir<2pt>{*} = "D17";
	\ar@{-} "D17";(40,  -66) *\dir<2pt>{*} = "E17";
	\ar@{} "E17";(42,  -62) *\dir<2pt>{*} = "F17";
	\ar@{-} "F17";(40,  -58) *\dir<2pt>{*} = "G17";
	\ar@{-} "B17";"G17";
	\ar@{-} "B17";"D17";
	\ar@{} "B17";"E17";
	\ar@{} "B17";"F17";
	\ar@{} "C17";"E17";
	\ar@{} "C17";"F17";
	\ar@{} "C17";"G17";
	\ar@{} "D17";"F17";
	\ar@{} "D17";"G17";
	\ar@{-} "E17";"G17";

	\ar@{} (0,  0);(48,  -58) *\dir<2pt>{*} = "B19";
	\ar@{-} "B19";(46,  -62) *\dir<2pt>{*} = "C19";
	\ar@{-} "C19";(48,  -66) *\dir<2pt>{*} = "D19";
	\ar@{-} "D19";(52,  -66) *\dir<2pt>{*} = "E19";
	\ar@{-} "E19";(54,  -62) *\dir<2pt>{*} = "F19";
	\ar@{-} "F19";(52,  -58) *\dir<2pt>{*} = "G19";
	\ar@{-} "B19";"G19";
	\ar@{} "B19";"D19";
	\ar@{} "B19";"E19";
	\ar@{} "B19";"F19";
	\ar@{} "C19";"E19";
	\ar@{} "C19";"F19";
	\ar@{} "C19";"G19";
	\ar@{} "D19";"F19";
	\ar@{} "D19";"G19";
	\ar@{} "E19";"G19";

	\ar@{} (0,  0);(60,  -58) *\dir<2pt>{*} = "B31";
	\ar@{-} "B31";(58,  -62) *\dir<2pt>{*} = "C31";
	\ar@{-} "C31";(60,  -66) *\dir<2pt>{*} = "D31";
	\ar@{-} "D31";(64,  -66) *\dir<2pt>{*} = "E31";
	\ar@{-} "E31";(66,  -62) *\dir<2pt>{*} = "F31";
	\ar@{-} "F31";(64,  -58) *\dir<2pt>{*} = "G31";
	\ar@{-} "B31";"G31";
	\ar@{} "B31";"D31";
	\ar@{} "B31";"E31";
	\ar@{} "B31";"F31";
	\ar@{} "C31";"E31";
	\ar@{} "C31";"F31";
	\ar@{} "C31";"G31";
	\ar@{} "D31";"F31";
	\ar@{-} "D31";"G31";
	\ar@{} "E31";"G31";

	\ar@{} (0,  0);(0,  -70) *\dir<2pt>{*} = "B1";
	\ar@{-} "B1";(-2,  -74) *\dir<2pt>{*} = "C1";
	\ar@{-} "C1";(0,  -78) *\dir<2pt>{*} = "D1";
	\ar@{-} "D1";(4,  -78) *\dir<2pt>{*} = "E1";
	\ar@{-} "E1";(6,  -74) *\dir<2pt>{*} = "F1";
	\ar@{} "F1";(4,  -70) *\dir<2pt>{*} = "G1";
	\ar@{-} "B1";"G1";
	\ar@{} "B1";"D1";
	\ar@{} "B1";"E1";
	\ar@{} "B1";"F1";
	\ar@{} "C1";"E1";
	\ar@{} "C1";"F1";
	\ar@{} "C1";"G1";
	\ar@{} "D1";"F1";
	\ar@{} "D1";"G1";
	\ar@{} "E1";"G1";

	\ar@{} (0,  0);(12,  -70) *\dir<2pt>{*} = "B2";
	\ar@{-} "B2";(10,  -74) *\dir<2pt>{*} = "C2";
	\ar@{-} "C2";(12,  -78) *\dir<2pt>{*} = "D2";
	\ar@{-} "D2";(16,  -78) *\dir<2pt>{*} = "E2";
	\ar@{-} "E2";(18,  -74) *\dir<2pt>{*} = "F2";
	\ar@{} "F2";(16,  -70) *\dir<2pt>{*} = "G2";
	\ar@{} "B2";"G2";
	\ar@{} "B2";"D2";
	\ar@{} "B2";"E2";
	\ar@{} "B2";"F2";
	\ar@{} "C2";"E2";
	\ar@{} "C2";"F2";
	\ar@{} "C2";"G2";
	\ar@{} "D2";"F2";
	\ar@{} "D2";"G2";
	\ar@{-} "E2";"G2";

	\ar@{} (0,  0);(24,  -70) *\dir<2pt>{*} = "B3";
	\ar@{-} "B3";(22,  -74) *\dir<2pt>{*} = "C3";
	\ar@{} "C3";(24,  -78) *\dir<2pt>{*} = "D3";
	\ar@{-} "D3";(28,  -78) *\dir<2pt>{*} = "E3";
	\ar@{} "E3";(30,  -74) *\dir<2pt>{*} = "F3";
	\ar@{-} "F3";(28,  -70) *\dir<2pt>{*} = "G3";
	\ar@{-} "B3";"G3";
	\ar@{-} "B3";"D3";
	\ar@{} "B3";"E3";
	\ar@{} "B3";"F3";
	\ar@{} "C3";"E3";
	\ar@{} "C3";"F3";
	\ar@{} "C3";"G3";
	\ar@{} "D3";"F3";
	\ar@{} "D3";"G3";
	\ar@{} "E3";"G3";

	\ar@{} (0,  0);(36,  -70) *\dir<2pt>{*} = "B4";
	\ar@{-} "B4";(34,  -74) *\dir<2pt>{*} = "C4";
	\ar@{-} "C4";(36,  -78) *\dir<2pt>{*} = "D4";
	\ar@{-} "D4";(40,  -78) *\dir<2pt>{*} = "E4";
	\ar@{} "E4";(42,  -74) *\dir<2pt>{*} = "F4";
	\ar@{} "F4";(40,  -70) *\dir<2pt>{*} = "G4";
	\ar@{} "B4";"G4";
	\ar@{} "B4";"D4";
	\ar@{} "B4";"E4";
	\ar@{} "B4";"F4";
	\ar@{} "C4";"E4";
	\ar@{} "C4";"F4";
	\ar@{} "C4";"G4";
	\ar@{-} "D4";"F4";
	\ar@{-} "D4";"G4";
	\ar@{} "E4";"G4";

	\ar@{} (0,  0);(48,  -70) *\dir<2pt>{*} = "B5";
	\ar@{-} "B5";(46,  -74) *\dir<2pt>{*} = "C5";
	\ar@{} "C5";(48,  -78) *\dir<2pt>{*} = "D5";
	\ar@{} "D5";(52,  -78) *\dir<2pt>{*} = "E5";
	\ar@{} "E5";(54,  -74) *\dir<2pt>{*} = "F5";
	\ar@{-} "F5";(52,  -70) *\dir<2pt>{*} = "G5";
	\ar@{-} "B5";"G5";
	\ar@{-} "B5";"D5";
	\ar@{} "B5";"E5";
	\ar@{} "B5";"F5";
	\ar@{} "C5";"E5";
	\ar@{} "C5";"F5";
	\ar@{} "C5";"G5";
	\ar@{} "D5";"F5";
	\ar@{} "D5";"G5";
	\ar@{-} "E5";"G5";

	\ar@{} (0,  0);(60,  -70) *\dir<2pt>{*} = "B6";
	\ar@{} "B6";(58,  -74) *\dir<2pt>{*} = "C6";
	\ar@{} "C6";(60,  -78) *\dir<2pt>{*} = "D6";
	\ar@{} "D6";(64,  -78) *\dir<2pt>{*} = "E6";
	\ar@{} "E6";(66,  -74) *\dir<2pt>{*} = "F6";
	\ar@{-} "F6";(64,  -70) *\dir<2pt>{*} = "G6";
	\ar@{-} "B6";"G6";
	\ar@{} "B6";"D6";
	\ar@{} "B6";"E6";
	\ar@{} "B6";"F6";
	\ar@{} "C6";"E6";
	\ar@{} "C6";"F6";
	\ar@{-} "C6";"G6";
	\ar@{} "D6";"F6";
	\ar@{-} "D6";"G6";
	\ar@{-} "E6";"G6";

	\ar@{} (0,  0);(60,  -86) *\dir<2pt>{*} = "B7";
	\ar@{-} "B7";(58,  -90) *\dir<2pt>{*} = "C7";
	\ar@{-} "C7";(60,  -94) *\dir<2pt>{*} = "D7";
	\ar@{} "D7";(64,  -94) *\dir<2pt>{*} = "E7";
	\ar@{} "E7";(66,  -90) *\dir<2pt>{*} = "F7";
	\ar@{} "F7";(64,  -86) *\dir<2pt>{*} = "G7";
	\ar@{-} "B7";"G7";
	\ar@{-} "B7";"D7";
	\ar@{-} "B7";"E7";
	\ar@{-} "B7";"F7";
	\ar@{} "C7";"E7";
	\ar@{} "C7";"F7";
	\ar@{} "C7";"G7";
	\ar@{} "D7";"F7";
	\ar@{} "D7";"G7";
	\ar@{} "E7";"G7";

	\ar@{} (0,  0);(60,  -98) *\dir<2pt>{*} = "B27";
	\ar@{-} "B27";(58,  -102) *\dir<2pt>{*} = "C27";
	\ar@{-} "C27";(60,  -106) *\dir<2pt>{*} = "D27";
	\ar@{-} "D27";(64,  -106) *\dir<2pt>{*} = "E27";
	\ar@{-} "E27";(66,  -102) *\dir<2pt>{*} = "F27";
	\ar@{} "F27";(64,  -98) *\dir<2pt>{*} = "G27";
	\ar@{} "B27";"G27";
	\ar@{-} "B27";"D27";
	\ar@{} "B27";"E27";
	\ar@{} "B27";"F27";
	\ar@{} "C27";"E27";
	\ar@{} "C27";"F27";
	\ar@{} "C27";"G27";
	\ar@{-} "D27";"F27";
	\ar@{-} "D27";"G27";
	\ar@{} "E27";"G27";

	\ar@{} (0,  0);(60,  -110) *\dir<2pt>{*} = "B28";
	\ar@{-} "B28";(58,  -114) *\dir<2pt>{*} = "C28";
	\ar@{-} "C28";(60,  -118) *\dir<2pt>{*} = "D28";
	\ar@{-} "D28";(64,  -118) *\dir<2pt>{*} = "E28";
	\ar@{-} "E28";(66,  -114) *\dir<2pt>{*} = "F28";
	\ar@{} "F28";(64,  -110) *\dir<2pt>{*} = "G28";
	\ar@{} "B28";"G28";
	\ar@{} "B28";"D28";
	\ar@{-} "B28";"E28";
	\ar@{} "B28";"F28";
	\ar@{-} "C28";"E28";
	\ar@{} "C28";"F28";
	\ar@{} "C28";"G28";
	\ar@{} "D28";"F28";
	\ar@{} "D28";"G28";
	\ar@{-} "E28";"G28";

	\ar@{} (0,  0);(60,  -122) *\dir<2pt>{*} = "B43";
	\ar@{-} "B43";(58,  -126) *\dir<2pt>{*} = "C43";
	\ar@{-} "C43";(60,  -130) *\dir<2pt>{*} = "D43";
	\ar@{-} "D43";(64,  -130) *\dir<2pt>{*} = "E43";
	\ar@{-} "E43";(66,  -126) *\dir<2pt>{*} = "F43";
	\ar@{-} "F43";(64,  -122) *\dir<2pt>{*} = "G43";
	\ar@{} "B43";"G43";
	\ar@{} "B43";"D43";
	\ar@{-} "B43";"E43";
	\ar@{} "B43";"F43";
	\ar@{-} "C43";"E43";
	\ar@{} "C43";"F43";
	\ar@{} "C43";"G43";
	\ar@{} "D43";"F43";
	\ar@{} "D43";"G43";
	\ar@{-} "E43";"G43";

	\ar@{} (0,  0);(60,  -134) *\dir<2pt>{*} = "B46";
	\ar@{-} "B46";(58,  -138) *\dir<2pt>{*} = "C46";
	\ar@{-} "C46";(60,  -142) *\dir<2pt>{*} = "D46";
	\ar@{-} "D46";(64,  -142) *\dir<2pt>{*} = "E46";
	\ar@{-} "E46";(66,  -138) *\dir<2pt>{*} = "F46";
	\ar@{} "F46";(64,  -134) *\dir<2pt>{*} = "G46";
	\ar@{} "B46";"G46";
	\ar@{} "B46";"D46";
	\ar@{-} "B46";"E46";
	\ar@{} "B46";"F46";
	\ar@{-} "C46";"E46";
	\ar@{} "C46";"F46";
	\ar@{-} "C46";"G46";
	\ar@{} "D46";"F46";
	\ar@{} "D46";"G46";
	\ar@{-} "E46";"G46";

	\ar@{} (0,  0);(60,  -146) *\dir<2pt>{*} = "B65";
	\ar@{-} "B65";(58,  -150) *\dir<2pt>{*} = "C65";
	\ar@{-} "C65";(60,  -154) *\dir<2pt>{*} = "D65";
	\ar@{} "D65";(64,  -154) *\dir<2pt>{*} = "E65";
	\ar@{-} "E65";(66,  -150) *\dir<2pt>{*} = "F65";
	\ar@{-} "F65";(64,  -146) *\dir<2pt>{*} = "G65";
	\ar@{} "B65";"G65";
	\ar@{} "B65";"D65";
	\ar@{} "B65";"E65";
	\ar@{-} "B65";"F65";
	\ar@{-} "C65";"E65";
	\ar@{-} "C65";"F65";
	\ar@{-} "C65";"G65";
	\ar@{-} "D65";"F65";
	\ar@{} "D65";"G65";
	\ar@{} "E65";"G65";

	\ar@{} (0,  0);(76,  -70) *\dir<2pt>{*} = "B98";
	\ar@{-} "B98";(74,  -74) *\dir<2pt>{*} = "C98";
	\ar@{-} "C98";(76,  -78) *\dir<2pt>{*} = "D98";
	\ar@{-} "D98";(80,  -78) *\dir<2pt>{*} = "E98";
	\ar@{-} "E98";(82,  -74) *\dir<2pt>{*} = "F98";
	\ar@{} "F98";(80,  -70) *\dir<2pt>{*} = "G98";
	\ar@{} "B98";"G98";
	\ar@{-} "B98";"D98";
	\ar@{-} "B98";"E98";
	\ar@{} "B98";"F98";
	\ar@{-} "C98";"E98";
	\ar@{} "C98";"F98";
	\ar@{} "C98";"G98";
	\ar@{} "D98";"F98";
	\ar@{} "D98";"G98";
	\ar@{-} "E98";"G98";

	\ar@{} (0,  0);(76,  -86) *\dir<2pt>{*} = "B63";
	\ar@{-} "B63";(74,  -90) *\dir<2pt>{*} = "C63";
	\ar@{-} "C63";(76,  -94) *\dir<2pt>{*} = "D63";
	\ar@{-} "D63";(80,  -94) *\dir<2pt>{*} = "E63";
	\ar@{-} "E63";(82,  -90) *\dir<2pt>{*} = "F63";
	\ar@{-} "F63";(80,  -86) *\dir<2pt>{*} = "G63";
	\ar@{} "B63";"G63";
	\ar@{-} "B63";"D63";
	\ar@{-} "B63";"E63";
	\ar@{} "B63";"F63";
	\ar@{-} "C63";"E63";
	\ar@{} "C63";"F63";
	\ar@{} "C63";"G63";
	\ar@{} "D63";"F63";
	\ar@{} "D63";"G63";
	\ar@{-} "E63";"G63";

	\ar@{} (0,  0);(76,  -98) *\dir<2pt>{*} = "B67";
	\ar@{-} "B67";(74,  -102) *\dir<2pt>{*} = "C67";
	\ar@{-} "C67";(76,  -106) *\dir<2pt>{*} = "D67";
	\ar@{-} "D67";(80,  -106) *\dir<2pt>{*} = "E67";
	\ar@{-} "E67";(82,  -102) *\dir<2pt>{*} = "F67";
	\ar@{} "F67";(80,  -98) *\dir<2pt>{*} = "G67";
	\ar@{-} "B67";"G67";
	\ar@{-} "B67";"D67";
	\ar@{-} "B67";"E67";
	\ar@{} "B67";"F67";
	\ar@{-} "C67";"E67";
	\ar@{} "C67";"F67";
	\ar@{} "C67";"G67";
	\ar@{} "D67";"F67";
	\ar@{} "D67";"G67";
	\ar@{-} "E67";"G67";

	\ar@{} (0,  0);(76,  -110) *\dir<2pt>{*} = "B85";
	\ar@{-} "B85";(74,  -114) *\dir<2pt>{*} = "C85";
	\ar@{-} "C85";(76,  -118) *\dir<2pt>{*} = "D85";
	\ar@{-} "D85";(80,  -118) *\dir<2pt>{*} = "E85";
	\ar@{-} "E85";(82,  -114) *\dir<2pt>{*} = "F85";
	\ar@{-} "F85";(80,  -110) *\dir<2pt>{*} = "G85";
	\ar@{} "B85";"G85";
	\ar@{} "B85";"D85";
	\ar@{} "B85";"E85";
	\ar@{-} "B85";"F85";
	\ar@{-} "C85";"E85";
	\ar@{-} "C85";"F85";
	\ar@{-} "C85";"G85";
	\ar@{-} "D85";"F85";
	\ar@{} "D85";"G85";
	\ar@{} "E85";"G85";

	\ar@{} (0,  0);(76,  -122) *\dir<2pt>{*} = "B86";
	\ar@{-} "B86";(74,  -126) *\dir<2pt>{*} = "C86";
	\ar@{-} "C86";(76,  -130) *\dir<2pt>{*} = "D86";
	\ar@{-} "D86";(80,  -130) *\dir<2pt>{*} = "E86";
	\ar@{-} "E86";(82,  -126) *\dir<2pt>{*} = "F86";
	\ar@{} "F86";(80,  -122) *\dir<2pt>{*} = "G86";
	\ar@{-} "B86";"G86";
	\ar@{-} "B86";"D86";
	\ar@{-} "B86";"E86";
	\ar@{} "B86";"F86";
	\ar@{-} "C86";"E86";
	\ar@{} "C86";"F86";
	\ar@{} "C86";"G86";
	\ar@{} "D86";"F86";
	\ar@{-} "D86";"G86";
	\ar@{-} "E86";"G86";

	\ar@{} (0,  0);(76,  -134) *\dir<2pt>{*} = "B95";
	\ar@{-} "B95";(74,  -138) *\dir<2pt>{*} = "C95";
	\ar@{-} "C95";(76,  -142) *\dir<2pt>{*} = "D95";
	\ar@{-} "D95";(80,  -142) *\dir<2pt>{*} = "E95";
	\ar@{-} "E95";(82,  -138) *\dir<2pt>{*} = "F95";
	\ar@{} "F95";(80,  -134) *\dir<2pt>{*} = "G95";
	\ar@{-} "B95";"G95";
	\ar@{-} "B95";"D95";
	\ar@{-} "B95";"E95";
	\ar@{} "B95";"F95";
	\ar@{-} "C95";"E95";
	\ar@{} "C95";"F95";
	\ar@{-} "C95";"G95";
	\ar@{} "D95";"F95";
	\ar@{-} "D95";"G95";
	\ar@{-} "E95";"G95";

	\ar@{} (0,  0);(76,  -146) *\dir<2pt>{*} = "B96";
	\ar@{-} "B96";(74,  -150) *\dir<2pt>{*} = "C96";
	\ar@{-} "C96";(76,  -154) *\dir<2pt>{*} = "D96";
	\ar@{-} "D96";(80,  -154) *\dir<2pt>{*} = "E96";
	\ar@{-} "E96";(82,  -150) *\dir<2pt>{*} = "F96";
	\ar@{-} "F96";(80,  -146) *\dir<2pt>{*} = "G96";
	\ar@{-} "B96";"G96";
	\ar@{} "B96";"D96";
	\ar@{} "B96";"E96";
	\ar@{-} "B96";"F96";
	\ar@{-} "C96";"E96";
	\ar@{-} "C96";"F96";
	\ar@{-} "C96";"G96";
	\ar@{-} "D96";"F96";
	\ar@{} "D96";"G96";
	\ar@{} "E96";"G96";

	\ar@{} (0,  0);(88,  -70) *\dir<2pt>{*} = "B98";
	\ar@{-} "B98";(86,  -74) *\dir<2pt>{*} = "C98";
	\ar@{-} "C98";(88,  -78) *\dir<2pt>{*} = "D98";
	\ar@{-} "D98";(92,  -78) *\dir<2pt>{*} = "E98";
	\ar@{-} "E98";(94,  -74) *\dir<2pt>{*} = "F98";
	\ar@{-} "F98";(92,  -70) *\dir<2pt>{*} = "G98";
	\ar@{} "B98";"G98";
	\ar@{} "B98";"D98";
	\ar@{-} "B98";"E98";
	\ar@{-} "B98";"F98";
	\ar@{-} "C98";"E98";
	\ar@{-} "C98";"F98";
	\ar@{-} "C98";"G98";
	\ar@{-} "D98";"F98";
	\ar@{} "D98";"G98";
	\ar@{} "E98";"G98";

	\ar@{} (0,  0);(88,  -86) *\dir<2pt>{*} = "B105";
	\ar@{-} "B105";(86,  -90) *\dir<2pt>{*} = "C105";
	\ar@{-} "C105";(88,  -94) *\dir<2pt>{*} = "D105";
	\ar@{-} "D105";(92,  -94) *\dir<2pt>{*} = "E105";
	\ar@{-} "E105";(94,  -90) *\dir<2pt>{*} = "F105";
	\ar@{-} "F105";(92,  -86) *\dir<2pt>{*} = "G105";
	\ar@{-} "B105";"G105";
	\ar@{-} "B105";"D105";
	\ar@{-} "B105";"E105";
	\ar@{-} "B105";"F105";
	\ar@{-} "C105";"E105";
	\ar@{-} "C105";"F105";
	\ar@{} "C105";"G105";
	\ar@{-} "D105";"F105";
	\ar@{} "D105";"G105";
	\ar@{} "E105";"G105";

	\ar@{} (0,  0);(88,  -98) *\dir<2pt>{*} = "B106";
	\ar@{-} "B106";(86,  -102) *\dir<2pt>{*} = "C106";
	\ar@{-} "C106";(88,  -106) *\dir<2pt>{*} = "D106";
	\ar@{-} "D106";(92,  -106) *\dir<2pt>{*} = "E106";
	\ar@{-} "E106";(94,  -102) *\dir<2pt>{*} = "F106";
	\ar@{-} "F106";(92,  -98) *\dir<2pt>{*} = "G106";
	\ar@{-} "B106";"G106";
	\ar@{-} "B106";"D106";
	\ar@{-} "B106";"E106";
	\ar@{-} "B106";"F106";
	\ar@{} "C106";"E106";
	\ar@{-} "C106";"F106";
	\ar@{} "C106";"G106";
	\ar@{-} "D106";"F106";
	\ar@{-} "D106";"G106";
	\ar@{} "E106";"G106";

	\ar@{} (0,  0);(88,  -110) *\dir<2pt>{*} = "B109";
	\ar@{-} "B109";(86,  -114) *\dir<2pt>{*} = "C109";
	\ar@{-} "C109";(88,  -118) *\dir<2pt>{*} = "D109";
	\ar@{-} "D109";(92,  -118) *\dir<2pt>{*} = "E109";
	\ar@{-} "E109";(94,  -114) *\dir<2pt>{*} = "F109";
	\ar@{-} "F109";(92,  -110) *\dir<2pt>{*} = "G109";
	\ar@{-} "B109";"G109";
	\ar@{} "B109";"D109";
	\ar@{-} "B109";"E109";
	\ar@{} "B109";"F109";
	\ar@{-} "C109";"E109";
	\ar@{-} "C109";"F109";
	\ar@{-} "C109";"G109";
	\ar@{-} "D109";"F109";
	\ar@{-} "D109";"G109";
	\ar@{-} "E109";"G109";

	\ar@{} (0,  0);(88,  -122) *\dir<2pt>{*} = "B111";
	\ar@{-} "B111";(86,  -126) *\dir<2pt>{*} = "C111";
	\ar@{-} "C111";(88,  -130) *\dir<2pt>{*} = "D111";
	\ar@{-} "D111";(92,  -130) *\dir<2pt>{*} = "E111";
	\ar@{-} "E111";(94,  -126) *\dir<2pt>{*} = "F111";
	\ar@{-} "F111";(92,  -122) *\dir<2pt>{*} = "G111";
	\ar@{-} "B111";"G111";
	\ar@{} "B111";"D111";
	\ar@{-} "B111";"E111";
	\ar@{-} "B111";"F111";
	\ar@{-} "C111";"E111";
	\ar@{-} "C111";"F111";
	\ar@{-} "C111";"G111";
	\ar@{-} "D111";"F111";
	\ar@{-} "D111";"G111";
	\ar@{-} "E111";"G111";

	\ar@{} (0,  0);(88,  -134) *\dir<2pt>{*} = "B112";
	\ar@{-} "B112";(86,  -138) *\dir<2pt>{*} = "C112";
	\ar@{-} "C112";(88,  -142) *\dir<2pt>{*} = "D112";
	\ar@{-} "D112";(92,  -142) *\dir<2pt>{*} = "E112";
	\ar@{-} "E112";(94,  -138) *\dir<2pt>{*} = "F112";
	\ar@{-} "F112";(92,  -134) *\dir<2pt>{*} = "G112";
	\ar@{-} "B112";"G112";
	\ar@{-} "B112";"D112";
	\ar@{-} "B112";"E112";
	\ar@{-} "B112";"F112";
	\ar@{-} "C112";"E112";
	\ar@{-} "C112";"F112";
	\ar@{-} "C112";"G112";
	\ar@{-} "D112";"F112";
	\ar@{-} "D112";"G112";
	\ar@{-} "E112";"G112";

	\ar@{} (0,  0);(-2,  -86) *\dir<2pt>{*} = "B8";
	\ar@{-} "B8";(-4,  -90) *\dir<2pt>{*} = "C8";
	\ar@{-} "C8";(-2,  -94) *\dir<2pt>{*} = "D8";
	\ar@{} "D8";(2,  -94) *\dir<2pt>{*} = "E8";
	\ar@{} "E8";(4,  -90) *\dir<2pt>{*} = "F8";
	\ar@{-} "F8";(2,  -86) *\dir<2pt>{*} = "G8";
	\ar@{-} "B8";"G8";
	\ar@{-} "B8";"D8";
	\ar@{-} "B8";"E8";
	\ar@{} "B8";"F8";
	\ar@{} "C8";"E8";
	\ar@{} "C8";"F8";
	\ar@{} "C8";"G8";
	\ar@{} "D8";"F8";
	\ar@{} "D8";"G8";
	\ar@{} "E8";"G8";

	\ar@{} (0,  0);(10,  -86) *\dir<2pt>{*} = "B9";
	\ar@{-} "B9";(8,  -90) *\dir<2pt>{*} = "C9";
	\ar@{-} "C9";(10,  -94) *\dir<2pt>{*} = "D9";
	\ar@{} "D9";(14,  -94) *\dir<2pt>{*} = "E9";
	\ar@{} "E9";(16,  -90) *\dir<2pt>{*} = "F9";
	\ar@{-} "F9";(14,  -86) *\dir<2pt>{*} = "G9";
	\ar@{-} "B9";"G9";
	\ar@{-} "B9";"D9";
	\ar@{} "B9";"E9";
	\ar@{} "B9";"F9";
	\ar@{} "C9";"E9";
	\ar@{} "C9";"F9";
	\ar@{} "C9";"G9";
	\ar@{} "D9";"F9";
	\ar@{} "D9";"G9";
	\ar@{-} "E9";"G9";

	\ar@{} (0,  0);(22,  -86) *\dir<2pt>{*} = "B10";
	\ar@{-} "B10";(20,  -90) *\dir<2pt>{*} = "C10";
	\ar@{-} "C10";(22,  -94) *\dir<2pt>{*} = "D10";
	\ar@{-} "D10";(26,  -94) *\dir<2pt>{*} = "E10";
	\ar@{} "E10";(28,  -90) *\dir<2pt>{*} = "F10";
	\ar@{} "F10";(26,  -86) *\dir<2pt>{*} = "G10";
	\ar@{-} "B10";"G10";
	\ar@{-} "B10";"D10";
	\ar@{} "B10";"E10";
	\ar@{-} "B10";"F10";
	\ar@{} "C10";"E10";
	\ar@{} "C10";"F10";
	\ar@{} "C10";"G10";
	\ar@{} "D10";"F10";
	\ar@{} "D10";"G10";
	\ar@{} "E10";"G10";

	\ar@{} (0,  0);(34,  -86) *\dir<2pt>{*} = "B11";
	\ar@{-} "B11";(32,  -90) *\dir<2pt>{*} = "C11";
	\ar@{-} "C11";(34,  -94) *\dir<2pt>{*} = "D11";
	\ar@{-} "D11";(38,  -94) *\dir<2pt>{*} = "E11";
	\ar@{} "E11";(40,  -90) *\dir<2pt>{*} = "F11";
	\ar@{-} "F11";(38,  -86) *\dir<2pt>{*} = "G11";
	\ar@{-} "B11";"G11";
	\ar@{-} "B11";"D11";
	\ar@{} "B11";"E11";
	\ar@{} "B11";"F11";
	\ar@{} "C11";"E11";
	\ar@{} "C11";"F11";
	\ar@{} "C11";"G11";
	\ar@{} "D11";"F11";
	\ar@{} "D11";"G11";
	\ar@{} "E11";"G11";

	\ar@{} (0,  0);(46,  -86) *\dir<2pt>{*} = "B12";
	\ar@{-} "B12";(44,  -90) *\dir<2pt>{*} = "C12";
	\ar@{-} "C12";(46,  -94) *\dir<2pt>{*} = "D12";
	\ar@{-} "D12";(50,  -94) *\dir<2pt>{*} = "E12";
	\ar@{-} "E12";(52,  -90) *\dir<2pt>{*} = "F12";
	\ar@{-} "F12";(50,  -86) *\dir<2pt>{*} = "G12";
	\ar@{} "B12";"G12";
	\ar@{-} "B12";"D12";
	\ar@{} "B12";"E12";
	\ar@{} "B12";"F12";
	\ar@{} "C12";"E12";
	\ar@{} "C12";"F12";
	\ar@{} "C12";"G12";
	\ar@{} "D12";"F12";
	\ar@{} "D12";"G12";
	\ar@{} "E12";"G12";

	\ar@{} (0,  0);(-2,  -98) *\dir<2pt>{*} = "B21";
	\ar@{-} "B21";(-4,  -102) *\dir<2pt>{*} = "C21";
	\ar@{-} "C21";(-2,  -106) *\dir<2pt>{*} = "D21";
	\ar@{-} "D21";(2,  -106) *\dir<2pt>{*} = "E21";
	\ar@{-} "E21";(4,  -102) *\dir<2pt>{*} = "F21";
	\ar@{} "F21";(2,  -98) *\dir<2pt>{*} = "G21";
	\ar@{} "B21";"G21";
	\ar@{-} "B21";"D21";
	\ar@{} "B21";"E21";
	\ar@{} "B21";"F21";
	\ar@{} "C21";"E21";
	\ar@{} "C21";"F21";
	\ar@{} "C21";"G21";
	\ar@{} "D21";"F21";
	\ar@{-} "D21";"G21";
	\ar@{-} "E21";"G21";

	\ar@{} (0,  0);(10,  -98) *\dir<2pt>{*} = "B22";
	\ar@{-} "B22";(8,  -102) *\dir<2pt>{*} = "C22";
	\ar@{-} "C22";(10,  -106) *\dir<2pt>{*} = "D22";
	\ar@{-} "D22";(14,  -106) *\dir<2pt>{*} = "E22";
	\ar@{} "E22";(16,  -102) *\dir<2pt>{*} = "F22";
	\ar@{-} "F22";(14,  -98) *\dir<2pt>{*} = "G22";
	\ar@{} "B22";"G22";
	\ar@{-} "B22";"D22";
	\ar@{-} "B22";"E22";
	\ar@{} "B22";"F22";
	\ar@{} "C22";"E22";
	\ar@{} "C22";"F22";
	\ar@{} "C22";"G22";
	\ar@{} "D22";"F22";
	\ar@{} "D22";"G22";
	\ar@{-} "E22";"G22";

	\ar@{} (0,  0);(22,  -98) *\dir<2pt>{*} = "B23";
	\ar@{-} "B23";(20,  -102) *\dir<2pt>{*} = "C23";
	\ar@{} "C23";(22,  -106) *\dir<2pt>{*} = "D23";
	\ar@{-} "D23";(26,  -106) *\dir<2pt>{*} = "E23";
	\ar@{-} "E23";(28,  -102) *\dir<2pt>{*} = "F23";
	\ar@{} "F23";(26,  -98) *\dir<2pt>{*} = "G23";
	\ar@{-} "B23";"G23";
	\ar@{-} "B23";"D23";
	\ar@{} "B23";"E23";
	\ar@{} "B23";"F23";
	\ar@{} "C23";"E23";
	\ar@{} "C23";"F23";
	\ar@{} "C23";"G23";
	\ar@{} "D23";"F23";
	\ar@{-} "D23";"G23";
	\ar@{-} "E23";"G23";

	\ar@{} (0,  0);(34,  -98) *\dir<2pt>{*} = "B25";
	\ar@{-} "B25";(32,  -102) *\dir<2pt>{*} = "C25";
	\ar@{} "C25";(34,  -106) *\dir<2pt>{*} = "D25";
	\ar@{-} "D25";(38,  -106) *\dir<2pt>{*} = "E25";
	\ar@{} "E25";(40,  -102) *\dir<2pt>{*} = "F25";
	\ar@{-} "F25";(38,  -98) *\dir<2pt>{*} = "G25";
	\ar@{-} "B25";"G25";
	\ar@{-} "B25";"D25";
	\ar@{} "B25";"E25";
	\ar@{} "B25";"F25";
	\ar@{} "C25";"E25";
	\ar@{} "C25";"F25";
	\ar@{} "C25";"G25";
	\ar@{} "D25";"F25";
	\ar@{-} "D25";"G25";
	\ar@{-} "E25";"G25";

	\ar@{} (0,  0);(46,  -98) *\dir<2pt>{*} = "B26";
	\ar@{-} "B26";(44,  -102) *\dir<2pt>{*} = "C26";
	\ar@{-} "C26";(46,  -106) *\dir<2pt>{*} = "D26";
	\ar@{-} "D26";(50,  -106) *\dir<2pt>{*} = "E26";
	\ar@{} "E26";(52,  -102) *\dir<2pt>{*} = "F26";
	\ar@{-} "F26";(50,  -98) *\dir<2pt>{*} = "G26";
	\ar@{-} "B26";"G26";
	\ar@{-} "B26";"D26";
	\ar@{-} "B26";"E26";
	\ar@{} "B26";"F26";
	\ar@{} "C26";"E26";
	\ar@{} "C26";"F26";
	\ar@{} "C26";"G26";
	\ar@{} "D26";"F26";
	\ar@{} "D26";"G26";
	\ar@{} "E26";"G26";

	\ar@{} (0,  0);(-2,  -110) *\dir<2pt>{*} = "B29";
	\ar@{} "B29";(-4,  -114) *\dir<2pt>{*} = "C29";
	\ar@{-} "C29";(-2,  -118) *\dir<2pt>{*} = "D29";
	\ar@{-} "D29";(2,  -118) *\dir<2pt>{*} = "E29";
	\ar@{} "E29";(4,  -114) *\dir<2pt>{*} = "F29";
	\ar@{-} "F29";(2,  -110) *\dir<2pt>{*} = "G29";
	\ar@{-} "B29";"G29";
	\ar@{-} "B29";"D29";
	\ar@{} "B29";"E29";
	\ar@{} "B29";"F29";
	\ar@{} "C29";"E29";
	\ar@{} "C29";"F29";
	\ar@{} "C29";"G29";
	\ar@{} "D29";"F29";
	\ar@{-} "D29";"G29";
	\ar@{-} "E29";"G29";

	\ar@{} (0,  0);(10,  -110) *\dir<2pt>{*} = "B30";
	\ar@{-} "B30";(8,  -114) *\dir<2pt>{*} = "C30";
	\ar@{-} "C30";(10,  -118) *\dir<2pt>{*} = "D30";
	\ar@{} "D30";(14,  -118) *\dir<2pt>{*} = "E30";
	\ar@{} "E30";(16,  -114) *\dir<2pt>{*} = "F30";
	\ar@{-} "F30";(14,  -110) *\dir<2pt>{*} = "G30";
	\ar@{-} "B30";"G30";
	\ar@{-} "B30";"D30";
	\ar@{} "B30";"E30";
	\ar@{} "B30";"F30";
	\ar@{} "C30";"E30";
	\ar@{} "C30";"F30";
	\ar@{} "C30";"G30";
	\ar@{} "D30";"F30";
	\ar@{-} "D30";"G30";
	\ar@{-} "E30";"G30";

	\ar@{} (0,  0);(22,  -110) *\dir<2pt>{*} = "B32";
	\ar@{-} "B32";(20,  -114) *\dir<2pt>{*} = "C32";
	\ar@{-} "C32";(22,  -118) *\dir<2pt>{*} = "D32";
	\ar@{-} "D32";(26,  -118) *\dir<2pt>{*} = "E32";
	\ar@{-} "E32";(28,  -114) *\dir<2pt>{*} = "F32";
	\ar@{-} "F32";(26,  -110) *\dir<2pt>{*} = "G32";
	\ar@{} "B32";"G32";
	\ar@{} "B32";"D32";
	\ar@{-} "B32";"E32";
	\ar@{} "B32";"F32";
	\ar@{} "C32";"E32";
	\ar@{} "C32";"F32";
	\ar@{} "C32";"G32";
	\ar@{} "D32";"F32";
	\ar@{} "D32";"G32";
	\ar@{-} "E32";"G32";

	\ar@{} (0,    0);(34,  -110) *\dir<2pt>{*} = "B34";
	\ar@{-} "B34";(32,  -114) *\dir<2pt>{*} = "C34";
	\ar@{-} "C34";(34,  -118) *\dir<2pt>{*} = "D34";
	\ar@{-} "D34";(38,  -118) *\dir<2pt>{*} = "E34";
	\ar@{-} "E34";(40,  -114) *\dir<2pt>{*} = "F34";
	\ar@{} "F34";(38,  -110) *\dir<2pt>{*} = "G34";
	\ar@{-} "B34";"G34";
	\ar@{-} "B34";"D34";
	\ar@{} "B34";"E34";
	\ar@{} "B34";"F34";
	\ar@{} "C34";"E34";
	\ar@{} "C34";"F34";
	\ar@{} "C34";"G34";
	\ar@{} "D34";"F34";
	\ar@{} "D34";"G34";
	\ar@{-} "E34";"G34";

	\ar@{} (0,    0);(46,  -110) *\dir<2pt>{*} = "B37";
	\ar@{-} "B37";(44,  -114) *\dir<2pt>{*} = "C37";
	\ar@{-} "C37";(46,  -118) *\dir<2pt>{*} = "D37";
	\ar@{-} "D37";(50,  -118) *\dir<2pt>{*} = "E37";
	\ar@{-} "E37";(52,  -114) *\dir<2pt>{*} = "F37";
	\ar@{} "F37" ;(50,  -110) *\dir<2pt>{*} = "G37";
	\ar@{-} "B37";"G37";
	\ar@{} "B37";"D37";
	\ar@{-} "B37";"E37";
	\ar@{} "B37";"F37";
	\ar@{} "C37";"E37";
	\ar@{} "C37";"F37";
	\ar@{} "C37";"G37";
	\ar@{} "D37";"F37";
	\ar@{} "D37";"G37";
	\ar@{-} "E37";"G37";

	\ar@{} (0,  0);(-2,  -122) *\dir<2pt>{*} = "B42";
	\ar@{-} "B42";(-4,  -126) *\dir<2pt>{*} = "C42";
	\ar@{-} "C42";(-2,  -130) *\dir<2pt>{*} = "D42";
	\ar@{-} "D42";(2,  -130) *\dir<2pt>{*} = "E42";
	\ar@{} "E42";(4,  -126) *\dir<2pt>{*} = "F42";
	\ar@{-} "F42";(2,  -122) *\dir<2pt>{*} = "G42";
	\ar@{-} "B42";"G42";
	\ar@{-} "B42";"D42";
	\ar@{-} "B42";"E42";
	\ar@{} "B42";"F42";
	\ar@{} "C42";"E42";
	\ar@{} "C42";"F42";
	\ar@{} "C42";"G42";
	\ar@{} "D42";"F42";
	\ar@{} "D42";"G42";
	\ar@{-} "E42";"G42";

	\ar@{} (0,  0);(10,  -122) *\dir<2pt>{*} = "B44";
	\ar@{-} "B44";(8,  -126) *\dir<2pt>{*} = "C44";
	\ar@{-} "C44";(10,  -130) *\dir<2pt>{*} = "D44";
	\ar@{-} "D44";(14,  -130) *\dir<2pt>{*} = "E44";
	\ar@{-} "E44";(16,  -126) *\dir<2pt>{*} = "F44";
	\ar@{} "F44";(14,  -122) *\dir<2pt>{*} = "G44";
	\ar@{-} "B44";"G44";
	\ar@{-} "B44";"D44";
	\ar@{-} "B44";"E44";
	\ar@{} "B44";"F44";
	\ar@{} "C44";"E44";
	\ar@{} "C44";"F44";
	\ar@{} "C44";"G44";
	\ar@{} "D44";"F44";
	\ar@{} "D44";"G44";
	\ar@{-} "E44";"G44";

	\ar@{} (0,  0);(22,  -122) *\dir<2pt>{*} = "B45";
	\ar@{-} "B45";(20,  -126) *\dir<2pt>{*} = "C45";
	\ar@{-} "C45";(22,  -130) *\dir<2pt>{*} = "D45";
	\ar@{-} "D45";(26,  -130) *\dir<2pt>{*} = "E45";
	\ar@{-} "E45";(28,  -126) *\dir<2pt>{*} = "F45";
	\ar@{} "F45";(26,  -122) *\dir<2pt>{*} = "G45";
	\ar@{-} "B45";"G45";
	\ar@{} "B45";"D45";
	\ar@{-} "B45";"E45";
	\ar@{} "B45";"F45";
	\ar@{-} "C45";"E45";
	\ar@{} "C45";"F45";
	\ar@{} "C45";"G45";
	\ar@{} "D45";"F45";
	\ar@{} "D45";"G45";
	\ar@{-} "E45";"G45";

	\ar@{} (0,  0);(34,  -122) *\dir<2pt>{*} = "B48";
	\ar@{-} "B48";(32,  -126) *\dir<2pt>{*} = "C48";
	\ar@{-} "C48";(34,  -130) *\dir<2pt>{*} = "D48";
	\ar@{-} "D48";(38,  -130) *\dir<2pt>{*} = "E48";
	\ar@{-} "E48";(40,  -126) *\dir<2pt>{*} = "F48";
	\ar@{} "F48";(38,  -122) *\dir<2pt>{*} = "G48";
	\ar@{-} "B48";"G48";
	\ar@{-} "B48";"D48";
	\ar@{-} "B48";"E48";
	\ar@{} "B48";"F48";
	\ar@{} "C48";"E48";
	\ar@{} "C48";"F48";
	\ar@{} "C48";"G48";
	\ar@{} "D48";"F48";
	\ar@{-} "D48";"G48";
	\ar@{} "E48";"G48";

	\ar@{} (0,  0);(46,  -122) *\dir<2pt>{*} = "B50";
	\ar@{-} "B50";(44,  -126) *\dir<2pt>{*} = "C50";
	\ar@{-} "C50";(46,  -130) *\dir<2pt>{*} = "D50";
	\ar@{-} "D50";(50,  -130) *\dir<2pt>{*} = "E50";
	\ar@{-} "E50";(52,  -126) *\dir<2pt>{*} = "F50";
	\ar@{-} "F50";(50,  -122) *\dir<2pt>{*} = "G50";
	\ar@{-} "B50";"G50";
	\ar@{-} "B50";"D50";
	\ar@{-} "B50";"E50";
	\ar@{} "B50";"F50";
	\ar@{} "C50";"E50";
	\ar@{} "C50";"F50";
	\ar@{} "C50";"G50";
	\ar@{} "D50";"F50";
	\ar@{} "D50";"G50";
	\ar@{} "E50";"G50";

	\ar@{} (0,  0);(-2,  -134) *\dir<2pt>{*} = "B51";
	\ar@{-} "B51";(-4,  -138) *\dir<2pt>{*} = "C51";
	\ar@{-} "C51";(-2,  -142) *\dir<2pt>{*} = "D51";
	\ar@{-} "D51";(2,  -142) *\dir<2pt>{*} = "E51";
	\ar@{-} "E51";(4,  -138) *\dir<2pt>{*} = "F51";
	\ar@{} "F51";(2,  -134) *\dir<2pt>{*} = "G51";
	\ar@{-} "B51";"G51";
	\ar@{-} "B51";"D51";
	\ar@{} "B51";"E51";
	\ar@{} "B51";"F51";
	\ar@{-} "C51";"E51";
	\ar@{} "C51";"F51";
	\ar@{} "C51";"G51";
	\ar@{} "D51";"F51";
	\ar@{} "D51";"G51";
	\ar@{-} "E51";"G51";

	\ar@{} (0,  0);(10,  -134) *\dir<2pt>{*} = "B52";
	\ar@{-} "B52";(8,  -138) *\dir<2pt>{*} = "C52";
	\ar@{-} "C52";(10,  -142) *\dir<2pt>{*} = "D52";
	\ar@{-} "D52";(14,  -142) *\dir<2pt>{*} = "E52";
	\ar@{} "E52";(16,  -138) *\dir<2pt>{*} = "F52";
	\ar@{-} "F52";(14,  -134) *\dir<2pt>{*} = "G52";
	\ar@{-} "B52";"G52";
	\ar@{} "B52";"D52";
	\ar@{-} "B52";"E52";
	\ar@{} "B52";"F52";
	\ar@{-} "C52";"E52";
	\ar@{} "C52";"F52";
	\ar@{} "C52";"G52";
	\ar@{} "D52";"F52";
	\ar@{-} "D52";"G52";
	\ar@{} "E52";"G52";

	\ar@{} (0,  0);(22,  -134) *\dir<2pt>{*} = "B53";
	\ar@{-} "B53";(20,  -138) *\dir<2pt>{*} = "C53";
	\ar@{-} "C53";(22,  -142) *\dir<2pt>{*} = "D53";
	\ar@{-} "D53";(26,  -142) *\dir<2pt>{*} = "E53";
	\ar@{-} "E53";(28,  -138) *\dir<2pt>{*} = "F53";
	\ar@{-} "F53";(26,  -134) *\dir<2pt>{*} = "G53";
	\ar@{} "B53";"G53";
	\ar@{} "B53";"D53";
	\ar@{} "B53";"E53";
	\ar@{-} "B53";"F53";
	\ar@{} "C53";"E53";
	\ar@{} "C53";"F53";
	\ar@{-} "C53";"G53";
	\ar@{-} "D53";"F53";
	\ar@{} "D53";"G53";
	\ar@{} "E53";"G53";

	\ar@{} (0,  0);(34,  -134) *\dir<2pt>{*} = "B55";
	\ar@{-} "B55";(32,  -138) *\dir<2pt>{*} = "C55";
	\ar@{-} "C55";(34,  -142) *\dir<2pt>{*} = "D55";
	\ar@{-} "D55";(38,  -142) *\dir<2pt>{*} = "E55";
	\ar@{-} "E55";(40,  -138) *\dir<2pt>{*} = "F55";
	\ar@{} "F55";(38,  -134) *\dir<2pt>{*} = "G55";
	\ar@{-} "B55";"G55";
	\ar@{} "B55";"D55";
	\ar@{-} "B55";"E55";
	\ar@{} "B55";"F55";
	\ar@{} "C55";"E55";
	\ar@{} "C55";"F55";
	\ar@{} "C55";"G55";
	\ar@{} "D55";"F55";
	\ar@{-} "D55";"G55";
	\ar@{-} "E55";"G55";

	\ar@{} (0,  0);(46,  -134) *\dir<2pt>{*} = "B56";
	\ar@{-} "B56";(44,  -138) *\dir<2pt>{*} = "C56";
	\ar@{-} "C56";(46,  -142) *\dir<2pt>{*} = "D56";
	\ar@{-} "D56";(50,  -142) *\dir<2pt>{*} = "E56";
	\ar@{-} "E56";(52,  -138) *\dir<2pt>{*} = "F56";
	\ar@{-} "F56";(50,  -134) *\dir<2pt>{*} = "G56";
	\ar@{} "B56";"G56";
	\ar@{} "B56";"D56";
	\ar@{} "B56";"E56";
	\ar@{-} "B56";"F56";
	\ar@{} "C56";"E56";
	\ar@{-} "C56";"F56";
	\ar@{-} "C56";"G56";
	\ar@{} "D56";"F56";
	\ar@{} "D56";"G56";
	\ar@{} "E56";"G56";

	\ar@{} (0,  0);(-2,  -146) *\dir<2pt>{*} = "B59";
	\ar@{-} "B59";(-4,  -150) *\dir<2pt>{*} = "C59";
	\ar@{-} "C59";(-2,  -154) *\dir<2pt>{*} = "D59";
	\ar@{-} "D59";(2,  -154) *\dir<2pt>{*} = "E59";
	\ar@{-} "E59";(4,  -150) *\dir<2pt>{*} = "F59";
	\ar@{-} "F59";(2,  -146) *\dir<2pt>{*} = "G59";
	\ar@{-} "B59";"G59";
	\ar@{-} "B59";"D59";
	\ar@{} "B59";"E59";
	\ar@{-} "B59";"F59";
	\ar@{} "C59";"E59";
	\ar@{} "C59";"F59";
	\ar@{} "C59";"G59";
	\ar@{} "D59";"F59";
	\ar@{} "D59";"G59";
	\ar@{} "E59";"G59";

	\ar@{} (0,  0);(10,  -146) *\dir<2pt>{*} = "B66";
	\ar@{-} "B66";(8,  -150) *\dir<2pt>{*} = "C66";
	\ar@{-} "C66";(10,  -154) *\dir<2pt>{*} = "D66";
	\ar@{-} "D66";(14,  -154) *\dir<2pt>{*} = "E66";
	\ar@{-} "E66";(16,  -150) *\dir<2pt>{*} = "F66";
	\ar@{-} "F66";(14,  -146) *\dir<2pt>{*} = "G66";
	\ar@{-} "B66";"G66";
	\ar@{-} "B66";"D66";
	\ar@{-} "B66";"E66";
	\ar@{-} "B66";"F66";
	\ar@{} "C66";"E66";
	\ar@{} "C66";"F66";
	\ar@{} "C66";"G66";
	\ar@{} "D66";"F66";
	\ar@{} "D66";"G66";
	\ar@{} "E66";"G66";

	\ar@{} (0,  0);(22,  -146) *\dir<2pt>{*} = "B68";
	\ar@{-} "B68";(20,  -150) *\dir<2pt>{*} = "C68";
	\ar@{-} "C68";(22,  -154) *\dir<2pt>{*} = "D68";
	\ar@{-} "D68";(26,  -154) *\dir<2pt>{*} = "E68";
	\ar@{-} "E68";(28,  -150) *\dir<2pt>{*} = "F68";
	\ar@{-} "F68";(26,  -146) *\dir<2pt>{*} = "G68";
	\ar@{} "B68";"G68";
	\ar@{} "B68";"D68";
	\ar@{} "B68";"E68";
	\ar@{-} "B68";"F68";
	\ar@{} "C68";"E68";
	\ar@{-} "C68";"F68";
	\ar@{-} "C68";"G68";
	\ar@{-} "D68";"F68";
	\ar@{} "D68";"G68";
	\ar@{} "E68";"G68";

	\ar@{} (0,  0);(34,  -146) *\dir<2pt>{*} = "B72";
	\ar@{-} "B72";(32,  -150) *\dir<2pt>{*} = "C72";
	\ar@{-} "C72";(34,  -154) *\dir<2pt>{*} = "D72";
	\ar@{-} "D72";(38,  -154) *\dir<2pt>{*} = "E72";
	\ar@{-} "E72";(40,  -150) *\dir<2pt>{*} = "F72";
	\ar@{-} "F72";(38,  -146) *\dir<2pt>{*} = "G72";
	\ar@{-} "B72";"G72";
	\ar@{-} "B72";"D72";
	\ar@{} "B72";"E72";
	\ar@{} "B72";"F72";
	\ar@{} "C72";"E72";
	\ar@{-} "C72";"F72";
	\ar@{} "C72";"G72";
	\ar@{-} "D72";"F72";
	\ar@{} "D72";"G72";
	\ar@{} "E72";"G72";

	\ar@{} (0,  0);(46,  -146) *\dir<2pt>{*} = "B73";
	\ar@{-} "B73";(44,  -150) *\dir<2pt>{*} = "C73";
	\ar@{-} "C73";(46,  -154) *\dir<2pt>{*} = "D73";
	\ar@{-} "D73";(50,  -154) *\dir<2pt>{*} = "E73";
	\ar@{-} "E73";(52,  -150) *\dir<2pt>{*} = "F73";
	\ar@{-} "F73";(50,  -146) *\dir<2pt>{*} = "G73";
	\ar@{} "B73";"G73";
	\ar@{} "B73";"D73";
	\ar@{} "B73";"E73";
	\ar@{-} "B73";"F73";
	\ar@{-} "C73";"E73";
	\ar@{} "C73";"F73";
	\ar@{-} "C73";"G73";
	\ar@{-} "D73";"F73";
	\ar@{} "D73";"G73";
	\ar@{} "E73";"G73";

	\ar@{} (0,  0);(-2,  -158) *\dir<2pt>{*} = "B74";
	\ar@{-} "B74";(-4,  -162) *\dir<2pt>{*} = "C74";
	\ar@{-} "C74";(-2,  -166) *\dir<2pt>{*} = "D74";
	\ar@{-} "D74";(2,  -166) *\dir<2pt>{*} = "E74";
	\ar@{-} "E74";(4,  -162) *\dir<2pt>{*} = "F74";
	\ar@{} "F74";(2,  -158) *\dir<2pt>{*} = "G74";
	\ar@{-} "B74";"G74";
	\ar@{-} "B74";"D74";
	\ar@{} "B74";"E74";
	\ar@{} "B74";"F74";
	\ar@{-} "C74";"E74";
	\ar@{} "C74";"F74";
	\ar@{} "C74";"G74";
	\ar@{} "D74";"F74";
	\ar@{-} "D74";"G74";
	\ar@{-} "E74";"G74";

	\ar@{} (0,  0);(10,  -158) *\dir<2pt>{*} = "B75";
	\ar@{-} "B75";(8,  -162) *\dir<2pt>{*} = "C75";
	\ar@{-} "C75";(10,  -166) *\dir<2pt>{*} = "D75";
	\ar@{-} "D75";(14,  -166) *\dir<2pt>{*} = "E75";
	\ar@{-} "E75";(16,  -162) *\dir<2pt>{*} = "F75";
	\ar@{-} "F75";(14,  -158) *\dir<2pt>{*} = "G75";
	\ar@{-} "B75";"G75";
	\ar@{} "B75";"D75";
	\ar@{-} "B75";"E75";
	\ar@{} "B75";"F75";
	\ar@{} "C75";"E75";
	\ar@{} "C75";"F75";
	\ar@{} "C75";"G75";
	\ar@{} "D75";"F75";
	\ar@{-} "D75";"G75";
	\ar@{-} "E75";"G75";

	\ar@{} (0,  0);(22,  -158) *\dir<2pt>{*} = "B76";
	\ar@{-} "B76";(20,  -162) *\dir<2pt>{*} = "C76";
	\ar@{-} "C76";(22,  -166) *\dir<2pt>{*} = "D76";
	\ar@{-} "D76";(26,  -166) *\dir<2pt>{*} = "E76";
	\ar@{-} "E76";(28,  -162) *\dir<2pt>{*} = "F76";
	\ar@{} "F76";(26,  -158) *\dir<2pt>{*} = "G76";
	\ar@{-} "B76";"G76";
	\ar@{} "B76";"D76";
	\ar@{-} "B76";"E76";
	\ar@{} "B76";"F76";
	\ar@{-} "C76";"E76";
	\ar@{} "C76";"F76";
	\ar@{} "C76";"G76";
	\ar@{} "D76";"F76";
	\ar@{-} "D76";"G76";
	\ar@{-} "E76";"G76";

	\ar@{} (0,  0);(34,  -158) *\dir<2pt>{*} = "B78";
	\ar@{-} "B78";(32,  -162) *\dir<2pt>{*} = "C78";
	\ar@{-} "C78";(34,  -166) *\dir<2pt>{*} = "D78";
	\ar@{-} "D78";(38,  -166) *\dir<2pt>{*} = "E78";
	\ar@{-} "E78";(40,  -162) *\dir<2pt>{*} = "F78";
	\ar@{-} "F78";(38,  -158) *\dir<2pt>{*} = "G78";
	\ar@{-} "B78";"G78";
	\ar@{-} "B78";"D78";
	\ar@{-} "B78";"E78";
	\ar@{} "B78";"F78";
	\ar@{} "C78";"E78";
	\ar@{-} "C78";"F78";
	\ar@{} "C78";"G78";
	\ar@{} "D78";"F78";
	\ar@{} "D78";"G78";
	\ar@{} "E78";"G78";

	\ar@{} (0,  0);(46,  -158) *\dir<2pt>{*} = "B89";
	\ar@{-} "B89";(44,  -162) *\dir<2pt>{*} = "C89";
	\ar@{-} "C89";(46,  -166) *\dir<2pt>{*} = "D89";
	\ar@{-} "D89";(50,  -166) *\dir<2pt>{*} = "E89";
	\ar@{-} "E89";(52,  -162) *\dir<2pt>{*} = "F89";
	\ar@{-} "F89";(50,  -158) *\dir<2pt>{*} = "G89";
	\ar@{-} "B89";"G89";
	\ar@{} "B89";"D89";
	\ar@{-} "B89";"E89";
	\ar@{} "B89";"F89";
	\ar@{-} "C89";"E89";
	\ar@{} "C89";"F89";
	\ar@{} "C89";"G89";
	\ar@{} "D89";"F89";
	\ar@{-} "D89";"G89";
	\ar@{-} "E89";"G89";

	\ar@{} (0,  0);(-2,  -170) *\dir<2pt>{*} = "B91";
	\ar@{-} "B91";(-4,  -174) *\dir<2pt>{*} = "C91";
	\ar@{-} "C91";(-2,  -178) *\dir<2pt>{*} = "D91";
	\ar@{-} "D91";(2,  -178) *\dir<2pt>{*} = "E91";
	\ar@{-} "E91";(4,  -174) *\dir<2pt>{*} = "F91";
	\ar@{-} "F91";(2,  -170) *\dir<2pt>{*} = "G91";
	\ar@{-} "B91";"G91";
	\ar@{} "B91";"D91";
	\ar@{-} "B91";"E91";
	\ar@{} "B91";"F91";
	\ar@{} "C91";"E91";
	\ar@{-} "C91";"F91";
	\ar@{-} "C91";"G91";
	\ar@{} "D91";"F91";
	\ar@{} "D91";"G91";
	\ar@{-} "E91";"G91";

	\ar@{} (0,  0);(10,  -170) *\dir<2pt>{*} = "B93";
	\ar@{-} "B93";(8,  -174) *\dir<2pt>{*} = "C93";
	\ar@{-} "C93";(10,  -178) *\dir<2pt>{*} = "D93";
	\ar@{-} "D93";(14,  -178) *\dir<2pt>{*} = "E93";
	\ar@{-} "E93";(16,  -174) *\dir<2pt>{*} = "F93";
	\ar@{-} "F93";(14,  -170) *\dir<2pt>{*} = "G93";
	\ar@{-} "B93";"G93";
	\ar@{} "B93";"D93";
	\ar@{-} "B93";"E93";
	\ar@{} "B93";"F93";
	\ar@{} "C93";"E93";
	\ar@{-} "C93";"F93";
	\ar@{} "C93";"G93";
	\ar@{} "D93";"F93";
	\ar@{-} "D93";"G93";
	\ar@{-} "E93";"G93";

	\ar@{} (0,  0);(22,  -170) *\dir<2pt>{*} = "B99";
	\ar@{-} "B99";(20,  -174) *\dir<2pt>{*} = "C99";
	\ar@{-} "C99";(22,  -178) *\dir<2pt>{*} = "D99";
	\ar@{-} "D99";(26,  -178) *\dir<2pt>{*} = "E99";
	\ar@{-} "E99";(28,  -174) *\dir<2pt>{*} = "F99";
	\ar@{-} "F99";(26,  -170) *\dir<2pt>{*} = "G99";
	\ar@{-} "B99";"G99";
	\ar@{} "B99";"D99";
	\ar@{-} "B99";"E99";
	\ar@{} "B99";"F99";
	\ar@{-} "C99";"E99";
	\ar@{-} "C99";"F99";
	\ar@{-} "C99";"G99";
	\ar@{} "D99";"F99";
	\ar@{-} "D99";"G99";
	\ar@{} "E99";"G99";

	\ar@{} (0,  0);(-2,  -190) *\dir<2pt>{*} = "B87";
	\ar@{-} "B87";(-4,  -194) *\dir<2pt>{*} = "C87";
	\ar@{-} "C87";(-2,  -198) *\dir<2pt>{*} = "D87";
	\ar@{-} "D87";(2,  -198) *\dir<2pt>{*} = "E87";
	\ar@{-} "E87";(4,  -194) *\dir<2pt>{*} = "F87";
	\ar@{-} "F87";(2,  -190) *\dir<2pt>{*} = "G87";
	\ar@{-} "B87";"G87";
	\ar@{} "B87";"D87";
	\ar@{} "B87";"E87";
	\ar@{-} "B87";"F87";
	\ar@{-} "C87";"E87";
	\ar@{} "C87";"F87";
	\ar@{-} "C87";"G87";
	\ar@{} "D87";"F87";
	\ar@{} "D87";"G87";
	\ar@{-} "E87";"G87";

	\ar@{} (0,  0);(10,  -190) *\dir<2pt>{*} = "B100";
	\ar@{-} "B100";(8,  -194) *\dir<2pt>{*} = "C100";
	\ar@{-} "C100";(10,  -198) *\dir<2pt>{*} = "D100";
	\ar@{-} "D100";(14,  -198) *\dir<2pt>{*} = "E100";
	\ar@{-} "E100";(16,  -194) *\dir<2pt>{*} = "F100";
	\ar@{-} "F100";(14,  -190) *\dir<2pt>{*} = "G100";
	\ar@{-} "B100";"G100";
	\ar@{} "B100";"D100";
	\ar@{} "B100";"E100";
	\ar@{-} "B100";"F100";
	\ar@{-} "C100";"E100";
	\ar@{} "C100";"F100";
	\ar@{-} "C100";"G100";
	\ar@{-} "D100";"F100";
	\ar@{-} "D100";"G100";
	\ar@{} "E100";"G100";

	\ar@{} (0,  0);(22,  -190) *\dir<2pt>{*} = "B108";
	\ar@{-} "B108";(20,  -194) *\dir<2pt>{*} = "C108";
	\ar@{-} "C108";(22,  -198) *\dir<2pt>{*} = "D108";
	\ar@{-} "D108";(26,  -198) *\dir<2pt>{*} = "E108";
	\ar@{-} "E108";(28,  -194) *\dir<2pt>{*} = "F108";
	\ar@{-} "F108";(26,  -190) *\dir<2pt>{*} = "G108";
	\ar@{-} "B108";"G108";
	\ar@{-} "B108";"D108";
	\ar@{} "B108";"E108";
	\ar@{-} "B108";"F108";
	\ar@{-} "C108";"E108";
	\ar@{} "C108";"F108";
	\ar@{-} "C108";"G108";
	\ar@{-} "D108";"F108";
	\ar@{} "D108";"G108";
	\ar@{-} "E108";"G108";

	\ar@{} (0,  0);(-2,  -206) *\dir<2pt>{*} = "B60";
	\ar@{-} "B60";(-4,  -210) *\dir<2pt>{*} = "C60";
	\ar@{-} "C60";(-2,  -214) *\dir<2pt>{*} = "D60";
	\ar@{-} "D60";(2,  -214) *\dir<2pt>{*} = "E60";
	\ar@{-} "E60";(4,  -210) *\dir<2pt>{*} = "F60";
	\ar@{-} "F60";(2,  -206) *\dir<2pt>{*} = "G60";
	\ar@{-} "B60";"G60";
	\ar@{} "B60";"D60";
	\ar@{} "B60";"E60";
	\ar@{-} "B60";"F60";
	\ar@{} "C60";"E60";
	\ar@{} "C60";"F60";
	\ar@{-} "C60";"G60";
	\ar@{} "D60";"F60";
	\ar@{} "D60";"G60";
	\ar@{} "E60";"G60";

	\ar@{} (0,  0);(10,  -206) *\dir<2pt>{*} = "B70";
	\ar@{-} "B70";(8,  -210) *\dir<2pt>{*} = "C70";
	\ar@{-} "C70";(10,  -214) *\dir<2pt>{*} = "D70";
	\ar@{-} "D70";(14,  -214) *\dir<2pt>{*} = "E70";
	\ar@{-} "E70";(16,  -210) *\dir<2pt>{*} = "F70";
	\ar@{-} "F70";(14,  -206) *\dir<2pt>{*} = "G70";
	\ar@{-} "B70";"G70";
	\ar@{} "B70";"D70";
	\ar@{} "B70";"E70";
	\ar@{} "B70";"F70";
	\ar@{-} "C70";"E70";
	\ar@{} "C70";"F70";
	\ar@{-} "C70";"G70";
	\ar@{} "D70";"F70";
	\ar@{} "D70";"G70";
	\ar@{-} "E70";"G70";

	\ar@{} (0,  0);(22,  -206) *\dir<2pt>{*} = "B71";
	\ar@{-} "B71";(20,  -210) *\dir<2pt>{*} = "C71";
	\ar@{-} "C71";(22,  -214) *\dir<2pt>{*} = "D71";
	\ar@{-} "D71";(26,  -214) *\dir<2pt>{*} = "E71";
	\ar@{-} "E71";(28,  -210) *\dir<2pt>{*} = "F71";
	\ar@{-} "F71";(26,  -206) *\dir<2pt>{*} = "G71";
	\ar@{-} "B71";"G71";
	\ar@{-} "B71";"D71";
	\ar@{} "B71";"E71";
	\ar@{} "B71";"F71";
	\ar@{} "C71";"E71";
	\ar@{-} "C71";"F71";
	\ar@{} "C71";"G71";
	\ar@{} "D71";"F71";
	\ar@{} "D71";"G71";
	\ar@{-} "E71";"G71";

	\ar@{} (0,  0);(34,  -206) *\dir<2pt>{*} = "B80";
	\ar@{-} "B80";(32,  -210) *\dir<2pt>{*} = "C80";
	\ar@{-} "C80";(34,  -214) *\dir<2pt>{*} = "D80";
	\ar@{-} "D80";(38,  -214) *\dir<2pt>{*} = "E80";
	\ar@{-} "E80";(40,  -210) *\dir<2pt>{*} = "F80";
	\ar@{-} "F80";(38,  -206) *\dir<2pt>{*} = "G80";
	\ar@{-} "B80";"G80";
	\ar@{-} "B80";"D80";
	\ar@{-} "B80";"E80";
	\ar@{} "B80";"F80";
	\ar@{} "C80";"E80";
	\ar@{} "C80";"F80";
	\ar@{-} "C80";"G80";
	\ar@{} "D80";"F80";
	\ar@{} "D80";"G80";
	\ar@{} "E80";"G80";

	\ar@{} (0,  0);(46,  -206) *\dir<2pt>{*} = "B94";
	\ar@{-} "B94";(44,  -210) *\dir<2pt>{*} = "C94";
	\ar@{-} "C94";(46,  -214) *\dir<2pt>{*} = "D94";
	\ar@{-} "D94";(50,  -214) *\dir<2pt>{*} = "E94";
	\ar@{-} "E94";(52,  -210) *\dir<2pt>{*} = "F94";
	\ar@{-} "F94";(50,  -206) *\dir<2pt>{*} = "G94";
	\ar@{-} "B94";"G94";
	\ar@{} "B94";"D94";
	\ar@{-} "B94";"E94";
	\ar@{} "B94";"F94";
	\ar@{-} "C94";"E94";
	\ar@{} "C94";"F94";
	\ar@{} "C94";"G94";
	\ar@{-} "D94";"F94";
	\ar@{} "D94";"G94";
	\ar@{-} "E94";"G94";

	\ar@{} (0,  0);(104,  -36) *\dir<2pt>{*} = "B20";
	\ar@{-} "B20";(102,  -40) *\dir<2pt>{*} = "C20";
	\ar@{-} "C20";(104,  -44) *\dir<2pt>{*} = "D20";
	\ar@{-} "D20";(108,  -44) *\dir<2pt>{*} = "E20";
	\ar@{-} "E20";(110,  -40) *\dir<2pt>{*} = "F20";
	\ar@{-} "F20";(108,  -36) *\dir<2pt>{*} = "G20";
	\ar@{} "B20";"G20";
	\ar@{-} "B20";"D20";
	\ar@{-} "B20";"E20";
	\ar@{} "B20";"F20";
	\ar@{} "C20";"E20";
	\ar@{} "C20";"F20";
	\ar@{} "C20";"G20";
	\ar@{} "D20";"F20";
	\ar@{} "D20";"G20";
	\ar@{-} "E20";"G20";

	\ar@{} (0,  0);(104,  -48) *\dir<2pt>{*} = "B24";
	\ar@{-} "B24";(102,  -52) *\dir<2pt>{*} = "C24";
	\ar@{-} "C24";(104,  -56) *\dir<2pt>{*} = "D24";
	\ar@{-} "D24";(108,  -56) *\dir<2pt>{*} = "E24";
	\ar@{-} "E24";(110,  -52) *\dir<2pt>{*} = "F24";
	\ar@{-} "F24";(108,  -48) *\dir<2pt>{*} = "G24";
	\ar@{} "B24";"G24";
	\ar@{-} "B24";"D24";
	\ar@{} "B24";"E24";
	\ar@{} "B24";"F24";
	\ar@{} "C24";"E24";
	\ar@{} "C24";"F24";
	\ar@{} "C24";"G24";
	\ar@{} "D24";"F24";
	\ar@{} "D24";"G24";
	\ar@{-} "E24";"G24";

	\ar@{} (0,  0);(104,  -60) *\dir<2pt>{*} = "B40";
	\ar@{} "B40";(102,  -64) *\dir<2pt>{*} = "C40";
	\ar@{-} "C40";(104,  -68) *\dir<2pt>{*} = "D40";
	\ar@{-} "D40";(108,  -68) *\dir<2pt>{*} = "E40";
	\ar@{-} "E40";(110,  -64) *\dir<2pt>{*} = "F40";
	\ar@{} "F40";(108,  -60) *\dir<2pt>{*} = "G40";
	\ar@{-} "B40";"G40";
	\ar@{-} "B40";"D40";
	\ar@{-} "B40";"E40";
	\ar@{} "B40";"F40";
	\ar@{} "C40";"E40";
	\ar@{} "C40";"F40";
	\ar@{} "C40";"G40";
	\ar@{} "D40";"F40";
	\ar@{-} "D40";"G40";
	\ar@{-} "E40";"G40";

	\ar@{} (0,  0);(104,  -72) *\dir<2pt>{*} = "B41";
	\ar@{-} "B41";(102,  -76) *\dir<2pt>{*} = "C41";
	\ar@{-} "C41";(104,  -80) *\dir<2pt>{*} = "D41";
	\ar@{-} "D41";(108,  -80) *\dir<2pt>{*} = "E41";
	\ar@{} "E41";(110,  -76) *\dir<2pt>{*} = "F41";
	\ar@{-} "F41";(108,  -72) *\dir<2pt>{*} = "G41";
	\ar@{-} "B41";"G41";
	\ar@{-} "B41";"D41";
	\ar@{-} "B41";"E41";
	\ar@{} "B41";"F41";
	\ar@{-} "C41";"E41";
	\ar@{} "C41";"F41";
	\ar@{} "C41";"G41";
	\ar@{} "D41";"F41";
	\ar@{} "D41";"G41";
	\ar@{} "E41";"G41";

	\ar@{} (0,  0);(104,  -84) *\dir<2pt>{*} = "B54";
	\ar@{-} "B54";(102,  -88) *\dir<2pt>{*} = "C54";
	\ar@{-} "C54";(104,  -92) *\dir<2pt>{*} = "D54";
	\ar@{-} "D54";(108,  -92) *\dir<2pt>{*} = "E54";
	\ar@{-} "E54";(110,  -88) *\dir<2pt>{*} = "F54";
	\ar@{-} "F54";(108,  -84) *\dir<2pt>{*} = "G54";
	\ar@{-} "B54";"G54";
	\ar@{-} "B54";"D54";
	\ar@{} "B54";"E54";
	\ar@{} "B54";"F54";
	\ar@{} "C54";"E54";
	\ar@{} "C54";"F54";
	\ar@{} "C54";"G54";
	\ar@{} "D54";"F54";
	\ar@{} "D54";"G54";
	\ar@{-} "E54";"G54";

	\ar@{} (0,  0);(104,  -96) *\dir<2pt>{*} = "B61";
	\ar@{-} "B61";(102,  -100) *\dir<2pt>{*} = "C61";
	\ar@{-} "C61";(104,  -104) *\dir<2pt>{*} = "D61";
	\ar@{-} "D61";(108,  -104) *\dir<2pt>{*} = "E61";
	\ar@{-} "E61";(110,  -100) *\dir<2pt>{*} = "F61";
	\ar@{} "F61";(108,  -96) *\dir<2pt>{*} = "G61";
	\ar@{-} "B61";"G61";
	\ar@{-} "B61";"D61";
	\ar@{-} "B61";"E61";
	\ar@{} "B61";"F61";
	\ar@{} "C61";"E61";
	\ar@{} "C61";"F61";
	\ar@{} "C61";"G61";
	\ar@{} "D61";"F61";
	\ar@{-} "D61";"G61";
	\ar@{-} "E61";"G61";

	\ar@{} (0,  0);(104,  -108) *\dir<2pt>{*} = "B62";
	\ar@{-} "B62";(102,  -112) *\dir<2pt>{*} = "C62";
	\ar@{-} "C62";(104,  -116) *\dir<2pt>{*} = "D62";
	\ar@{-} "D62";(108,  -116) *\dir<2pt>{*} = "E62";
	\ar@{-} "E62";(110,  -112) *\dir<2pt>{*} = "F62";
	\ar@{-} "F62";(108,  -108) *\dir<2pt>{*} = "G62";
	\ar@{-} "B62";"G62";
	\ar@{-} "B62";"D62";
	\ar@{} "B62";"E62";
	\ar@{} "B62";"F62";
	\ar@{} "C62";"E62";
	\ar@{} "C62";"F62";
	\ar@{} "C62";"G62";
	\ar@{} "D62";"F62";
	\ar@{-} "D62";"G62";
	\ar@{-} "E62";"G62";

	\ar@{} (0,  0);(104,  -120) *\dir<2pt>{*} = "B64";
	\ar@{-} "B64";(102,  -124) *\dir<2pt>{*} = "C64";
	\ar@{-} "C64";(104,  -128) *\dir<2pt>{*} = "D64";
	\ar@{-} "D64";(108,  -128) *\dir<2pt>{*} = "E64";
	\ar@{-} "E64";(110,  -124) *\dir<2pt>{*} = "F64";
	\ar@{} "F64";(108,  -120) *\dir<2pt>{*} = "G64";
	\ar@{-} "B64";"G64";
	\ar@{-} "B64";"D64";
	\ar@{} "B64";"E64";
	\ar@{} "B64";"F64";
	\ar@{} "C64";"E64";
	\ar@{} "C64";"F64";
	\ar@{-} "C64";"G64";
	\ar@{} "D64";"F64";
	\ar@{-} "D64";"G64";
	\ar@{-} "E64";"G64";

	\ar@{} (0,  0);(104,  -132) *\dir<2pt>{*} = "B77";
	\ar@{-} "B77";(102,  -136) *\dir<2pt>{*} = "C77";
	\ar@{-} "C77";(104,  -140) *\dir<2pt>{*} = "D77";
	\ar@{-} "D77";(108,  -140) *\dir<2pt>{*} = "E77";
	\ar@{-} "E77";(110,  -136) *\dir<2pt>{*} = "F77";
	\ar@{-} "F77";(108,  -132) *\dir<2pt>{*} = "G77";
	\ar@{-} "B77";"G77";
	\ar@{-} "B77";"D77";
	\ar@{} "B77";"E77";
	\ar@{} "B77";"F77";
	\ar@{} "C77";"E77";
	\ar@{} "C77";"F77";
	\ar@{-} "C77";"G77";
	\ar@{} "D77";"F77";
	\ar@{} "D77";"G77";
	\ar@{-} "E77";"G77";

	\ar@{} (0,  0);(104,  -144) *\dir<2pt>{*} = "B79";
	\ar@{-} "B79";(102,  -148) *\dir<2pt>{*} = "C79";
	\ar@{-} "C79";(104,  -152) *\dir<2pt>{*} = "D79";
	\ar@{-} "D79";(108,  -152) *\dir<2pt>{*} = "E79";
	\ar@{-} "E79";(110,  -148) *\dir<2pt>{*} = "F79";
	\ar@{-} "F79";(108,  -144) *\dir<2pt>{*} = "G79";
	\ar@{-} "B79";"G79";
	\ar@{-} "B79";"D79";
	\ar@{} "B79";"E79";
	\ar@{} "B79";"F79";
	\ar@{} "C79";"E79";
	\ar@{} "C79";"F79";
	\ar@{-} "C79";"G79";
	\ar@{} "D79";"F79";
	\ar@{-} "D79";"G79";
	\ar@{} "E79";"G79";

	\ar@{} (0,  0);(104,  -156) *\dir<2pt>{*} = "B81";
	\ar@{-} "B81";(102,  -160) *\dir<2pt>{*} = "C81";
	\ar@{-} "C81";(104,  -164) *\dir<2pt>{*} = "D81";
	\ar@{-} "D81";(108,  -164) *\dir<2pt>{*} = "E81";
	\ar@{-} "E81";(110,  -160) *\dir<2pt>{*} = "F81";
	\ar@{-} "F81";(108,  -156) *\dir<2pt>{*} = "G81";
	\ar@{-} "B81";"G81";
	\ar@{-} "B81";"D81";
	\ar@{-} "B81";"E81";
	\ar@{} "B81";"F81";
	\ar@{} "C81";"E81";
	\ar@{} "C81";"F81";
	\ar@{} "C81";"G81";
	\ar@{} "D81";"F81";
	\ar@{-} "D81";"G81";
	\ar@{-} "E81";"G81";

	\ar@{} (0,  0);(104,  -168) *\dir<2pt>{*} = "B82";
	\ar@{-} "B82";(102,  -172) *\dir<2pt>{*} = "C82";
	\ar@{-} "C82";(104,  -176) *\dir<2pt>{*} = "D82";
	\ar@{-} "D82";(108,  -176) *\dir<2pt>{*} = "E82";
	\ar@{-} "E82";(110,  -172) *\dir<2pt>{*} = "F82";
	\ar@{-} "F82";(108,  -168) *\dir<2pt>{*} = "G82";
	\ar@{-} "B82";"G82";
	\ar@{-} "B82";"D82";
	\ar@{-} "B82";"E82";
	\ar@{} "B82";"F82";
	\ar@{-} "C82";"E82";
	\ar@{} "C82";"F82";
	\ar@{} "C82";"G82";
	\ar@{} "D82";"F82";
	\ar@{} "D82";"G82";
	\ar@{-} "E82";"G82";

	\ar@{} (0,  0);(116,  -36) *\dir<2pt>{*} = "B83";
	\ar@{-} "B83";(114,  -40) *\dir<2pt>{*} = "C83";
	\ar@{-} "C83";(116,  -44) *\dir<2pt>{*} = "D83";
	\ar@{-} "D83";(120,  -44) *\dir<2pt>{*} = "E83";
	\ar@{-} "E83";(122,  -40) *\dir<2pt>{*} = "F83";
	\ar@{} "F83";(120,  -36) *\dir<2pt>{*} = "G83";
	\ar@{-} "B83";"G83";
	\ar@{-} "B83";"D83";
	\ar@{} "B83";"E83";
	\ar@{} "B83";"F83";
	\ar@{-} "C83";"E83";
	\ar@{} "C83";"F83";
	\ar@{-} "C83";"G83";
	\ar@{} "D83";"F83";
	\ar@{-} "D83";"G83";
	\ar@{-} "E83";"G83";

	\ar@{} (0,  0);(116,  -48) *\dir<2pt>{*} = "B84";
	\ar@{-} "B84";(114,  -52) *\dir<2pt>{*} = "C84";
	\ar@{-} "C84";(116,  -56) *\dir<2pt>{*} = "D84";
	\ar@{-} "D84";(120,  -56) *\dir<2pt>{*} = "E84";
	\ar@{-} "E84";(122,  -52) *\dir<2pt>{*} = "F84";
	\ar@{-} "F84";(120,  -48) *\dir<2pt>{*} = "G84";
	\ar@{-} "B84";"G84";
	\ar@{} "B84";"D84";
	\ar@{-} "B84";"E84";
	\ar@{} "B84";"F84";
	\ar@{-} "C84";"E84";
	\ar@{} "C84";"F84";
	\ar@{-} "C84";"G84";
	\ar@{} "D84";"F84";
	\ar@{} "D84";"G84";
	\ar@{-} "E84";"G84";

	\ar@{} (0,  0);(116,  -60) *\dir<2pt>{*} = "B88";
	\ar@{-} "B88";(114,  -64) *\dir<2pt>{*} = "C88";
	\ar@{-} "C88";(116,  -68) *\dir<2pt>{*} = "D88";
	\ar@{-} "D88";(120,  -68) *\dir<2pt>{*} = "E88";
	\ar@{-} "E88";(122,  -64) *\dir<2pt>{*} = "F88";
	\ar@{-} "F88";(120,  -60) *\dir<2pt>{*} = "G88";
	\ar@{-} "B88";"G88";
	\ar@{} "B88";"D88";
	\ar@{} "B88";"E88";
	\ar@{-} "B88";"F88";
	\ar@{-} "C88";"E88";
	\ar@{} "C88";"F88";
	\ar@{-} "C88";"G88";
	\ar@{-} "D88";"F88";
	\ar@{} "D88";"G88";
	\ar@{} "E88";"G88";

	\ar@{} (0,  0);(116,  -72) *\dir<2pt>{*} = "B90";
	\ar@{-} "B90";(114,  -76) *\dir<2pt>{*} = "C90";
	\ar@{-} "C90";(116,  -80) *\dir<2pt>{*} = "D90";
	\ar@{-} "D90";(120,  -80) *\dir<2pt>{*} = "E90";
	\ar@{-} "E90";(122,  -76) *\dir<2pt>{*} = "F90";
	\ar@{-} "F90";(120,  -72) *\dir<2pt>{*} = "G90";
	\ar@{-} "B90";"G90";
	\ar@{} "B90";"D90";
	\ar@{} "B90";"E90";
	\ar@{-} "B90";"F90";
	\ar@{-} "C90";"E90";
	\ar@{} "C90";"F90";
	\ar@{} "C90";"G90";
	\ar@{-} "D90";"F90";
	\ar@{} "D90";"G90";
	\ar@{-} "E90";"G90";

	\ar@{} (0,  0);(116,  -84) *\dir<2pt>{*} = "B92";
	\ar@{-} "B92";(114,  -88) *\dir<2pt>{*} = "C92";
	\ar@{-} "C92";(116,  -92) *\dir<2pt>{*} = "D92";
	\ar@{-} "D92";(120,  -92) *\dir<2pt>{*} = "E92";
	\ar@{-} "E92";(122,  -88) *\dir<2pt>{*} = "F92";
	\ar@{-} "F92";(120,  -84) *\dir<2pt>{*} = "G92";
	\ar@{-} "B92";"G92";
	\ar@{} "B92";"D92";
	\ar@{-} "B92";"E92";
	\ar@{} "B92";"F92";
	\ar@{-} "C92";"E92";
	\ar@{-} "C92";"F92";
	\ar@{} "C92";"G92";
	\ar@{-} "D92";"F92";
	\ar@{} "D92";"G92";
	\ar@{} "E92";"G92";

	\ar@{} (0,  0);(116,  -96) *\dir<2pt>{*} = "B97";
	\ar@{-} "B97";(114,  -100) *\dir<2pt>{*} = "C97";
	\ar@{-} "C97";(116,  -104) *\dir<2pt>{*} = "D97";
	\ar@{-} "D97";(120,  -104) *\dir<2pt>{*} = "E97";
	\ar@{-} "E97";(122,  -100) *\dir<2pt>{*} = "F97";
	\ar@{-} "F97";(120,  -96) *\dir<2pt>{*} = "G97";
	\ar@{} "B97";"G97";
	\ar@{-} "B97";"D97";
	\ar@{-} "B97";"E97";
	\ar@{-} "B97";"F97";
	\ar@{} "C97";"E97";
	\ar@{-} "C97";"F97";
	\ar@{-} "C97";"G97";
	\ar@{-} "D97";"F97";
	\ar@{} "D97";"G97";
	\ar@{} "E97";"G97";

	\ar@{} (0,  0);(116,  -108) *\dir<2pt>{*} = "B101";
	\ar@{-} "B101";(114,  -112) *\dir<2pt>{*} = "C101";
	\ar@{-} "C101";(116,  -116) *\dir<2pt>{*} = "D101";
	\ar@{-} "D101";(120,  -116) *\dir<2pt>{*} = "E101";
	\ar@{-} "E101";(122,  -112) *\dir<2pt>{*} = "F101";
	\ar@{-} "F101";(120,  -108) *\dir<2pt>{*} = "G101";
	\ar@{-} "B101";"G101";
	\ar@{-} "B101";"D101";
	\ar@{-} "B101";"E101";
	\ar@{} "B101";"F101";
	\ar@{} "C101";"E101";
	\ar@{-} "C101";"F101";
	\ar@{} "C101";"G101";
	\ar@{} "D101";"F101";
	\ar@{-} "D101";"G101";
	\ar@{-} "E101";"G101";

	\ar@{} (0,  0);(116,  -120) *\dir<2pt>{*} = "B102";
	\ar@{-} "B102";(114,  -124) *\dir<2pt>{*} = "C102";
	\ar@{-} "C102";(116,  -128) *\dir<2pt>{*} = "D102";
	\ar@{-} "D102";(120,  -128) *\dir<2pt>{*} = "E102";
	\ar@{-} "E102";(122,  -124) *\dir<2pt>{*} = "F102";
	\ar@{-} "F102";(120,  -120) *\dir<2pt>{*} = "G102";
	\ar@{} "B102";"G102";
	\ar@{-} "B102";"D102";
	\ar@{-} "B102";"E102";
	\ar@{-} "B102";"F102";
	\ar@{-} "C102";"E102";
	\ar@{} "C102";"F102";
	\ar@{-} "C102";"G102";
	\ar@{-} "D102";"F102";
	\ar@{} "D102";"G102";
	\ar@{} "E102";"G102";

	\ar@{} (0,  0);(116,  -132) *\dir<2pt>{*} = "B103";
	\ar@{-} "B103";(114,  -136) *\dir<2pt>{*} = "C103";
	\ar@{-} "C103";(116,  -140) *\dir<2pt>{*} = "D103";
	\ar@{-} "D103";(120,  -140) *\dir<2pt>{*} = "E103";
	\ar@{-} "E103";(122,  -136) *\dir<2pt>{*} = "F103";
	\ar@{-} "F103";(120,  -132) *\dir<2pt>{*} = "G103";
	\ar@{-} "B103";"G103";
	\ar@{} "B103";"D103";
	\ar@{} "B103";"E103";
	\ar@{-} "B103";"F103";
	\ar@{-} "C103";"E103";
	\ar@{-} "C103";"F103";
	\ar@{} "C103";"G103";
	\ar@{-} "D103";"F103";
	\ar@{-} "D103";"G103";
	\ar@{} "E103";"G103";

	\ar@{} (0,  0);(116,  -144) *\dir<2pt>{*} = "B104";
	\ar@{-} "B104";(114,  -148) *\dir<2pt>{*} = "C104";
	\ar@{-} "C104";(116,  -152) *\dir<2pt>{*} = "D104";
	\ar@{-} "D104";(120,  -152) *\dir<2pt>{*} = "E104";
	\ar@{-} "E104";(122,  -148) *\dir<2pt>{*} = "F104";
	\ar@{-} "F104";(120,  -144) *\dir<2pt>{*} = "G104";
	\ar@{-} "B104";"G104";
	\ar@{} "B104";"D104";
	\ar@{-} "B104";"E104";
	\ar@{-} "B104";"F104";
	\ar@{-} "C104";"E104";
	\ar@{-} "C104";"F104";
	\ar@{-} "C104";"G104";
	\ar@{-} "D104";"F104";
	\ar@{} "D104";"G104";
	\ar@{} "E104";"G104";

	\ar@{} (0,  0);(116,  -156) *\dir<2pt>{*} = "B107";
	\ar@{-} "B107";(114,  -160) *\dir<2pt>{*} = "C107";
	\ar@{-} "C107";(116,  -164) *\dir<2pt>{*} = "D107";
	\ar@{-} "D107";(120,  -164) *\dir<2pt>{*} = "E107";
	\ar@{-} "E107";(122,  -160) *\dir<2pt>{*} = "F107";
	\ar@{-} "F107";(120,  -156) *\dir<2pt>{*} = "G107";
	\ar@{-} "B107";"G107";
	\ar@{} "B107";"D107";
	\ar@{-} "B107";"E107";
	\ar@{} "B107";"F107";
	\ar@{-} "C107";"E107";
	\ar@{-} "C107";"F107";
	\ar@{} "C107";"G107";
	\ar@{-} "D107";"F107";
	\ar@{-} "D107";"G107";
	\ar@{-} "E107";"G107";

	\ar@{} (0,  0);(116,  -168) *\dir<2pt>{*} = "B110";
	\ar@{-} "B110";(114,  -172) *\dir<2pt>{*} = "C110";
	\ar@{-} "C110";(116,  -176) *\dir<2pt>{*} = "D110";
	\ar@{-} "D110";(120,  -176) *\dir<2pt>{*} = "E110";
	\ar@{-} "E110";(122,  -172) *\dir<2pt>{*} = "F110";
	\ar@{-} "F110";(120,  -168) *\dir<2pt>{*} = "G110";
	\ar@{-} "B110";"G110";
	\ar@{} "B110";"D110";
	\ar@{-} "B110";"E110";
	\ar@{-} "B110";"F110";
	\ar@{-} "C110";"E110";
	\ar@{-} "C110";"F110";
	\ar@{-} "C110";"G110";
	\ar@{-} "D110";"F110";
	\ar@{-} "D110";"G110";
	\ar@{} "E110";"G110";

	\textcolor{red}{\ar@{} (0,  0);(-13,  -32);}
	\textcolor{red}{\ar@{-} (-13,  -32);(2,  -32);}
	\textcolor{red}{\ar@{} (0,  0);(20,  -32) *\txt{HL-Comparability};}
	\textcolor{red}{\ar@{-} (37,  -32);(124,  -32);}
	\textcolor{red}{\ar@{-} (-17,  -32);(-17,  -186);}
	\textcolor{red}{\ar@{-} (121,  -32);(121,  -186);}
	\textcolor{red}{\ar@{-} (-20,  -186);(120,  -186);}

	\textcolor{black}{\ar@{} (0,  0);(-17,  -40);}
	\textcolor{black}{\ar@{-} (-17,  -40);(12,  -40);}
	\textcolor{black}{\ar@{} (0,  0);(20,  -40) *\txt{Bipartite};}
	\textcolor{black}{\ar@{-} (29,  -40);(55,  -40);}
	\textcolor{black}{\ar@{-} (-21,  -40);(-21,  -82);}
	\textcolor{black}{\ar@{-} (52,  -40);(52,  -82);}
	\textcolor{black}{\ar@{-} (-24,  -82);(51,  -82);}

	\textcolor{cyan}{\ar@{} (0,  0);(35,  -68);}
	\textcolor{cyan}{\ar@{-} (35,  -68);(78,  -68);}
	\textcolor{cyan}{\ar@{} (0,  0);(62,  -60) *\txt{Trivially};}
	\textcolor{cyan}{\ar@{} (0,  0);(60,  -64) *\txt{Perfect};}
	\textcolor{cyan}{\ar@{-} (31,  -68);(31,  -158);}
	\textcolor{cyan}{\ar@{-} (72.5,  -68);(72.5,  -158);}
	\textcolor{cyan}{\ar@{-} (28,  -158);(71,  -158);}

\end{xy}


\par \vspace{2mm}
${\mb{Acknowledgement.}}$ I wish to thank Professor Hidefumi Ohsugi
for many valuable comments. 
This research was supported by the JST (Japan Science and Technology Agency)
CREST (Core Research for Evolutional Science and Technology) research project
Harmony of Gr\"{o}bner Bases and the Modern Industrial Society in the framework of the
JST Mathematics Program ``Alliance for Breakthrough between Mathematics and
Sciences."


\end{document}